\documentclass[a4paper,11pt]{article}
\usepackage[margin=2.5cm]{geometry}

\usepackage[T1]{fontenc}
\usepackage[utf8]{inputenc}
\usepackage{url}
\usepackage{graphicx}
\usepackage{mathtools}
\usepackage{amsmath}
\usepackage{amssymb}
\usepackage{amsthm}
\usepackage{dsfont}
\usepackage{amsfonts}
\usepackage{mathrsfs}
\usepackage{bbm}
\usepackage{enumerate}
\usepackage{graphicx,tikz,pgf}
\usepackage[pdfborder={0 0 0}]{hyperref}
\newcounter{NET}

\usetikzlibrary{arrows}

\usepackage[all,cmtip]{xy}
\usepackage{amssymb,amsmath,amsthm,mathrsfs}
\pagestyle{plain}

\def\RR{{\mathbb R}}
\def\CC{{\mathbb C}}
\def\NN{{\mathbb N}}
\def\ZZ{{\mathbb Z}}

\def\A{{\mathcal A}}
\def\B{{\mathcal B}}

\def\D{{\mathcal D}}

\def\H{{\mathcal H}}
\def\I{{\mathcal I}}
\def\K{{\mathcal K}}

\def\U{{\mathcal U}}
\def\V{{\mathcal V}}
\def\W{{\mathcal W}}

\def\a{\alpha}
\def\b{\beta}

\def\e{\varepsilon}

\def\g{\gamma}

\def\i{\iota}

\def\t{\tau}

\def\z{\zeta}

\def\Ad{{\hbox{\rm Ad\,}}}
\def\Aut{{\hbox{Aut}}}

\def\End{{\hbox{End}}}

\def\sp{{\rm sp}\,}

\def\1{{\mathbbm 1}}
\def\Exp{{\rm Exp}}

\def\diff{{\rm Diff}_+}

\def\diffs1{\diff(S^1)}

\def\vect{{\rm Vect}}

\def\vir{{\rm Vir}}

\def\supp{{\rm supp\,}}

\def\psl2r{{\rm PSL}(2,\RR)}
\def\sl2r{{\rm SL}(2,\RR)}
\def\su11{{\rm SU}(1,1)}
\def\2dmob{{\overline{\psl2r}\times\overline{\psl2r}}}
\def\<{\langle}
\def\>{\rangle}

\def\sint{\lfloor s\rfloor}

\def\dom{{\mathscr{D}}}

\newcommand{\fin}{\mathrm{fin}}

\newtheorem{theorem}{Theorem}[section]
\newtheorem{lemma}[theorem]{Lemma}
\newtheorem{corollary}[theorem]{Corollary}
\newtheorem{proposition}[theorem]{Proposition}

\theoremstyle{definition}

\theoremstyle{remark}
\newtheorem{remark}[theorem]{Remark}

\numberwithin{equation}{section}

\title{Positive energy representations of Sobolev diffeomorphism groups of the circle}
\date{}
\author{
 {\bf Sebastiano Carpi}\footnote{Supported in part by ERC advanced grant 669240 QUEST ``Quantum Algebraic Structures and Models'' and GNAMPA-INDAM.}\\ 
Dipartimento di Matematica, Universit\`a di Roma ``Tor Vergata''\\
   Via della Ricerca Scientifica 1, I-00133 Roma, Italy\\
   email: {\tt carpi@mat.uniroma2.it}  \\ 
  \\
{\bf Simone Del Vecchio}\footnote{Supported by ERC advanced grant 669240 QUEST ``Quantum Algebraic Structures and Models''.}
\\
   Dipartimento di Matematica, Universit\`a di Roma ``Tor Vergata''\\
   Via della Ricerca Scientifica 1, I-00133 Roma, Italy\\
   email: {\tt delvecch@mat.uniroma2.it}\\
\\
{\bf Stefano Iovieno}
\\
   Dipartimento di Matematica, Universit\`a di Roma ``La Sapienza''\\
   Piazzale Aldo Moro 5, I-00185 Roma, Italy\\
   email: {\tt iovieno@mat.uniroma1.it}\\
\\
{\bf Yoh Tanimoto}\footnote{Supported until February 2020 by Programma per giovani ricercatori, anno 2014 ``Rita Levi Montalcini''
of the Italian Ministry of Education, University and Research.}
\\
  Dipartimento di Matematica, Universit\`a di Roma ``Tor Vergata''\\
   Via della Ricerca Scientifica 1, I-00133 Roma, Italy\\
   email: {\tt hoyt@mat.uniroma2.it} 
      \vspace{0.5cm}
}

\begin{document}

\maketitle
\begin{abstract}
We show that any positive energy projective unitary representation of
$\diff(S^1)$ extends to a strongly continuous projective unitary
representation of the fractional Sobolev diffeomorphisms $\D^s(S^1)$ for any real $s>3$,
and in particular to $C^k$-diffeomorphisms $\diff^k(S^1)$ with $k\geq4$. A similar result holds for the
universal covering groups provided that the representation is assumed
to be a direct sum of irreducibles.

As an application we show that a conformal net of von Neumann algebras on $S^1$ is covariant with respect to $\D^s(S^1)$, $s > 3$.
Moreover every direct sum of irreducible representations of a conformal net is also $\D^s(S^1)$-covariant. 
\end{abstract}

\section{Introduction}
The group of (smooth) diffeomorphisms of a manifold has been extensively studied and 
there have been many interesting results concerning its algebraic and topological properties, see e.g.\! \cite{Milnor84}.
Among them, the group $\diff(S^1)$ of orientation preserving diffeomorphisms of the circle $S^1$ is of particular interest
in connection with conformal field theory.
In $(1+1)$-dimensional conformal field theory, the symmetry group of the chiral components is $\diff(\RR)$
and often this can be extended to $\diff(S^1)$.
As this group contains spacetime translations,
the relevant representations must be {\it positive energy representations}
and they act on the space of local observables.
The representation theory of positive energy representations has been exploited for construction and classification of
a certain subclass of conformal field theories, see e.g.\! \cite{KL04-1}. 

Non-trivial positive energy representations of $\diff(S^1)$ are necessarily projective.
Any irreducible unitary positive energy representation of the Virasoro algebra extends to a projective representation of the Lie algebra $\vect(S^1)$,
the Lie algebra of vector fields on $S^1$, and it integrates to a positive energy projective unitary representation of 
$\diff(S^1)$ \cite{Neretin83, GW85, Toledano-Laredo99-1}. It follows from \cite[Theorem A.2]{Carpi04}, see also \cite[Section 3.2]{CKLW18}, 
that all irreducible positive energy unitary projective representations of $\diff(S^1)$ arise in this way.
Accordingly they are completely classified by  the central charge $c$ and the lowest conformal energy $h$ \cite{KR87}.
Related results including reducible representations have been recently obtained in \cite{NS15,Zellner17}.

These representations of $\vect(S^1)$ extend to certain non-smooth vector fields
as linear maps \cite{CW05}.
Apart from that this fact had many applications (e.g.\! 
the uniqueness of conformal covariance in conformal nets \cite{CW05},
positivity of energy in DHR sectors \cite{Weiner06}, split property in conformal nets \cite{MTW18} and covariance of soliton representations \cite{Henriques19, DIT19}),
it leads naturally to the question whether the group representations
extend to suitable groups of non-smooth diffeomorphisms.
In contrast to the wide range of results and applications
concerning the algebraic, analytic and topological properties of the group $\diff^k(M)$ of $C^k$ diffeomorphisms 
and $\D^s(M)$ of Sobolev class diffeomorphisms (see e.g.\! \cite{EM70, Misiolek97, Baynaga97, KW09, Figalli10})
 and some results on (true) representations \cite{KL02, AM06, Kuzmin07, Malliavin08},
there appears to be only few results in the literature on positive energy representations of these groups.
Indeed, $\D^s(M)$ is an infinite-dimensional manifold modelled on
the space $H^s(M)$ of $H^s$-vector fields, which is {\it not} a Lie algebra with
the usual Lie bracket for $\vect^\infty(M)$. This makes the study of representations of $\D^s(M)$ rather subtle.

In this paper, we show that any positive energy (projective) representation of the diffeomorphism group
extends to $\D^s(S^1)$ for $s>3$.  We do this first in the irreducible case by considering the action of $\D^s(S^1)$ on vector fields, and therefore,
by exploiting the representation theory of the Virasoro algebra. To obtain the result for the general (reducible) case, we show that the irreducible projective representations which have the same central charge $c$ can be made locally into multiplier representations with the same cocycle and this allows us to take the direct sum of these projective representations. It turns out that conformal nets are covariant with respect to this extended action.

For some special representations appearing in Fock space,
further extensions have been done first to $C^3$-diffeomorphisms \cite{Vromen13}, then to $\D^s(S^1)$, $s>2$ \cite{DIT19}.
The arguments depend on realizing these representations in some specific conformal field theory,
and it is open whether the results are valid for general central charge $c$.
In contrast, by our argument, representations extend to $\D^s(S^1)$ for any real $s > 3$ and for any $c$.
While the extensions to $\D^s(S^1)$ do not necessarily act nicely on the Lie algebra representations  when $2<s\le 3$,
they do so and are differentiable when $s>3$.

Indeed, our proof follows in part the strategy in \cite{GW85} for the integrability of the representations of the Virasoro algebra.
The extension to non-smooth diffeomorphisms then follows from the above mentioned extension to non-smooth vector fields of
the corresponding projective representation of $\vect(S^1)$ given in \cite{CW05}.
Actually, our argument can be used to give a simpler proof of the results in \cite{GW85}, see Remark \ref{remarkGW}.

This paper is organized as follows.
In Section \ref{preliminaries}, we recall the relevant groups and algebras, their topologies
and representations.
In Section \ref{extension}, we first extend the irreducible projective representations of $\diff(S^1)$ to $\D^s(S^1)$ with $s>3$.
Then we lift them locally to multiplier representations, and show that the direct sum can make sense
as projective representations.
Section \ref{conformal} demonstrates that two-dimensional chiral conformal field theories described by
conformal nets of von Neumann algebras have
this extended symmetry of $\D^s(S^1)$.
We summarize possible further continuation of this work in Section \ref{outlook}.

\section{Preliminaries}\label{preliminaries}
\subsection{\texorpdfstring{$\diff(S^1)$}{diffs1} and the Virasoro algebra}
\paragraph{The diffeomorphism group.}
Let us denote by $\diff(S^1)$ the group of orientation preserving, smooth diffeomorphisms of the circle $S^1\coloneqq \lbrace z\in\CC :\vert z\vert=1\rbrace$ and $\vect(S^1)$ denote the set of smooth real vector fields on $S^1.$
$\diff(S^1)$  is an infinite dimensional Lie group whose Lie algebra is identified with the real topological vector space
$\vect(S^1)$ of smooth vector fields on $S^1$ with $C^\infty$ topology \cite{Milnor84}.
In the following we identify $\vect(S^1)$ with $C^\infty(S^1,\mathbb{R})$ and
for $f\in C^{\infty}(S^1,\mathbb{R})$ we denote by $f^\prime$ the derivative of $f$ with respect to the angle $\theta$,
$$ f^\prime(z)=\frac{d}{d\theta}f(e^{i\theta})\bigg\rvert_{e^{i \theta}=z}.$$
We consider a diffeomorphism $\gamma\in\diff(S^1)$ as a map from $S^1$
to $S^1 \subset \CC$. With this convention, its action on $f\in \vect(S^1)$ is 
\begin{align}\label{eq:defgammaf}
(\gamma_* f)(e^{i\theta})
=-ie^{-i\theta}\left(\frac{d}{d\varphi}\gamma(e^{i\varphi})\right)\bigg\rvert_{e^{i\varphi} = \gamma^{-1}(e^{i\theta})}f(\gamma^{-1}(e^{i\theta})). 
\end{align}

We denote by $\diff^k(S^1)$ the group of $C^k$-diffeomorphisms of $S^1$.
Note that this is not a Lie group, and indeed, the corresponding linear space $\vect^k(S^1)$
of $C^k$-vector fields is not closed under the natural Lie bracket (see below).

The universal covering group of $\diff(S^1)$ (resp.\! $\diff^k(S^1)$), $\widetilde{\diff(S^1)}$ (resp.\! $\widetilde{\diff^k(S^1)}$),
can be identified\footnote{The realization of $\widetilde{\diff^k(S^1)}$ works in the same way as $\widetilde{\diff(S^1)}$
as in \cite[Section 6.1]{Toledano-Laredo99-1}, see also \cite[Example 4.2.6]{Hamilton82}.} with the group of $C^{\infty}$-diffeomorphisms
(resp.\! $C^k$-diffeomorphisms) $\gamma$ of $\mathbb{R}$ which satisfy
\begin{equation*}
\gamma(\theta+2\pi)=\gamma(\theta)+2\pi.
\end{equation*}
If $\gamma\in\widetilde{\diff(S^1)}$, its image under the covering map is in the following denoted by $\mathring{\gamma}\in\diff(S^1)$, where $\mathring{\gamma}(e^{i\theta})=e^{i\gamma(\theta)}$.
Conversely, if $\g \in \diff(S^1)$, there is an element $\tilde{\g} \in \widetilde{\diff(S^1)}$
whose image under the covering map is $\g$. Such a $\tilde{\g}$ is unique up to $2\pi$
and called a lift of $\g$.

The group $\diff(S^1)$ admits the Bott-Virasoro cocycle $B:\diff(S^1)\times\diff(S^1)\rightarrow \mathbb{R}$ (see e.g. \cite{FH05}).
The Bott-Virasoro group is then defined as the group with elements
\[
 (\gamma, t)\in\diff(S^1)\times\mathbb{R}
\]
and with multiplication
\[
 (\gamma_1,t_1)\cdot(\gamma_2,t_2)=(\gamma_1\gamma_2, t_1+t_2+ B(\gamma_1,\gamma_2)).
\]
Note that, given a true (not projective) unitary irreducible representation $V$ of the universal covering of
the Bott-Virasoro group, one can obtain a unitary multiplier representation\footnote{for the definition of unitary multiplier representation see Section \ref{projective}.} $\underline{V}(\gamma) := V(\gamma, 0)$ of $\widetilde{\diff(S^1)}$
(with respect to the Bott-Virasoro cocycle $B$). Then the map $\underline{V}:\widetilde{\diff(S^1)}\rightarrow U(\mathcal{H})$ satisfies
\[
 \underline{V}(\gamma_1)\underline{V}(\gamma_2)=e^{ic B(\mathring{\gamma_1},\mathring{\gamma_2})}\underline{V}(\gamma_1 \gamma_2),
\]
where $c\in \RR$ by irreducibility.

\paragraph{The Lie algebra.}
The space $\vect(S^1)$ is endowed with the Lie algebra structure with
the Lie bracket given by
\[
 [f,g]=f^{\prime}g-f g^{\prime}.
\]

As a Lie algebra, $\vect(S^1)$ admits the Gelfand–Fuchs two-cocycle 
\begin{equation}\label{GFcocycle}
\omega (f,g)=\frac{1}{48\pi}\int_{S^1}(f(e^{i\theta})g^{\prime\prime\prime}(e^{i\theta})-f^{\prime\prime\prime}(e^{i\theta})g(e^{i\theta}))d\theta.
\end{equation}

The Virasoro algebra $\vir$ is the central extension of the complexification of
the algebra generated by the trigonometric polynomials in $\vect(S^1)$ defined by
the two-cocycle $\omega$.
It can be explicitly described as the complex Lie algebra generated by $L_n$, $n\in\mathbb{Z}$,
and the central element $\mathfrak{\kappa}$, with brackets
\[
 [L_n,L_m]=(n-m)L_{n+m}+\delta_{n+m,0}\frac{n^3-n}{12}\mathfrak{\kappa}.
\]
Consider a representation $\rho:\vir\rightarrow\End(V)$ of $\vir$ on a complex vector space $V$
endowed with a scalar product $\langle\cdot,\cdot\rangle$. We call $\rho$ a {\bf unitary positive energy representation} if the following hold
\begin{enumerate}
\item Unitarity: $\langle v,\rho(L_n)w\rangle=\langle \rho(L_{-n})v,w\rangle$ for every $v,w\in V$ and $n\in\ZZ$;
\item Positivity of the energy: $V=\bigoplus_{\lambda\in\RR_+\cup\lbrace 0\rbrace}V_{\lambda}$, where $V_{\lambda}\coloneqq \ker(\rho(L_0)-\lambda\1_V)$. The lowest eigenvalue of $\rho(L_0)$ is called lowest weight;
\item  Central charge: $\rho(\mathfrak{\kappa})=c\1_V$;
\end{enumerate}
There exists an irreducible unitary positive energy representation with central charge $c$ and lowest weight $h$ if and only if $c\ge 1$ and $h\ge 0$
(continuous series representation) or $(c,h)=(c(m),h_{p,q}(m))$, where $c(m)=1-\frac{6}{(m+2)(m+3)}$, $h_{p,q}(m)=\frac{(p(m+1)-qm)^2-1}{4m(m+1)}$, $m=3,4,\cdots$, $p=1,2,\cdots,m-1$, $q=1,2,\cdots,p$, (discrete series representation) \cite{KR87}\cite{DMS97}. In this case the representation space $V$ is denoted by $\H^\fin(c,h)$. We denote by $\mathcal{H}(c,h)$ the Hilbert space completion of the vector space $\H^\fin(c,h)$ associated with the unique irreducible unitary positive energy representation of $\vir$
with central charge $c$ and lowest weight $h$.

In these representations, the conformal Hamiltonian
$\rho(L_0)$ is diagonalized, and on the linear span of its eigenvectors
$\mathcal{H}^{\fin}(c,h)$ (the space of finite energy vectors),
the Virasoro algebra acts algebraically as unbounded operators.

\paragraph{The stress-energy tensor.}
Let $\mathcal{H}(c,h)$ as above and, with abuse of notation,
we denote by $L_n$ the elements of $\vir$ represented in $\mathcal{H}(c,h)$.
For a smooth complex-valued function $f$ on $S^1$ with finitely many non-zero Fourier components,
the (chiral) stress-energy tensor associated with $f$ is the operator
$$T(f)=\sum_{n\in\mathbb{Z}}L_n \hat{f}_n$$
acting on $\mathcal{H}(c,h)$, where
$$\hat{f}_n=\int_0^{2\pi}\frac{d\theta}{2\pi}e^{-in\theta}f(e^{i\theta}).$$
The stress-energy tensor $T$ can be extended to a particular linear space of functions strictly containing the set of all smooth functions,
and when $f$ is a real-valued function, $T(f)$ is essentially self-adjoint on $\mathcal{H}^{\fin}(c,h)$ \cite{CW05}.
This fact will be used in this article and will be thus resumed in some detail in Section \ref{non-smooth}.

It is a crucial fact that the irreducible representations $\mathcal{H}(c,h)$ of $\vir$ integrate to irreducible unitary strongly continuous representations of the universal covering of the Bott-Virasoro group \cite{FH05}.
In other words, denoting by $q$ the quotient map $q: \U(\mathcal{H}(c,h))\rightarrow \U(\mathcal{H}(c,h))/\mathbb{C}$
(we denote by $\U(\K)$ the group of unitary operators on $\K$),
there is an irreducible, unitary, strongly continuous multiplier representation  $U$ of $\widetilde{\diff(S^1)}$,
the universal covering of $\diff(S^1)$, such that
\[
 q(U(\Exp(f)))=q(e^{iT(f)})
\]
for all $f\in\vect(S^1)$, where $\Exp$ is the Lie-theoretic exponential map of $\diff(S^1)$ (see \cite{Milnor84}).

For the stress-energy tensor $T$, we have the following covariance
\cite[Proposition 5.1, Proposition 3.1]{FH05}.
\begin{proposition}\label{pr:covariance}
The stress-energy tensor $T$ on $\mathcal{H}(c,h)$ transforms according to 
\[
 U(\gamma)T(f)U(\gamma)^*=T(\mathring{\gamma}_*({f}))+\frac{c}{24\pi}\int^{2\pi}_0\{\mathring{\gamma},z\}\bigg\rvert_{z=e^{i\theta}}f(e^{i\theta})e^{i2\theta}d\theta
\]
on vectors in $\mathcal{H}^{\fin}(c,h)$, for $f\in\vect(S^1)$ and $\gamma\in\widetilde{\diff(S^1)}$. Furthermore the commutation relations
\[
 i[T(g),T(f)]=T(g^\prime f-f^\prime g)+ c \omega(g,f),
\]
where $\omega$ is the Gelfand–Fuchs two-cocycle \eqref{GFcocycle}, hold for arbitrary $f,g\in C^\infty (S^1)$, on vectors $\psi\in \mathcal{H}^{\fin}(c,h).$ 
\end{proposition}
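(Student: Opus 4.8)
The plan is to prove the two statements in the order they are logically needed: the commutation relations first, as they are purely algebraic consequences of the defining relations of $\vir$ together with the energy bounds, and then the finite covariance law, which I would bootstrap from the commutation relations by a differentiation argument along one-parameter groups.

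For the commutation relations I insert the mode expansions $T(f)=\sum_{n}\hat f_n L_n$ and $T(g)=\sum_n\hat g_n L_n$ and use $[L_n,L_m]=(n-m)L_{n+m}+\delta_{n+m,0}\frac{n^3-n}{12}\1$. On a finite energy vector $\psi\in\H^\fin(c,h)$ the rapid decay of $\hat f_n,\hat g_n$ (smoothness of $f,g$) together with the linear energy bounds $\|L_n\psi\|\lesssim(1+|n|)\|(L_0+\1)\psi\|$ make every double sum absolutely convergent and legitimise the rearrangement. The contribution of the $(n-m)L_{n+m}$ part reassembles, after noting $\widehat{h'}_k=ik\hat h_k$ so that the $k$-th Fourier coefficient of $g'f-f'g$ is $\sum_{n+m=k}i(n-m)\hat g_n\hat f_m$, into $\tfrac1i T(g'f-f'g)$; multiplying by $i$ reproduces the first term on the right. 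The central part collapses to a scalar $\tfrac{c}{12i}\sum_n(n^3-n)\hat g_n\hat f_{-n}\1$, which one identifies with $\tfrac1i c\,\omega(g,f)$ after computing Fourier coefficients of third derivatives. Here one should flag the only genuine subtlety: the written Gelfand--Fuchs cocycle produces $n^3$ rather than $n^3-n$, and the two differ by the coboundary $b([g,f])$ with $b(h)=\tfrac{1}{48\pi}\int_{S^1}h\,d\theta$; the representative forced on us is the M\"obius-normalised one (vanishing for $n=0,\pm1$), which is exactly what will be needed for compatibility with the Schwarzian in the covariance law.

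For the covariance I fix $g\in\vect(S^1)$ and the one-parameter group $\gamma_t=\Exp(tg)$. Since $q(U(\gamma_t))=q(e^{itT(g)})$, the undetermined phase cancels in any conjugation, so I may work directly with the genuine strongly continuous unitary group $V_t:=e^{itT(g)}$ generated by the self-adjoint $T(g)$. Writing $F(t):=V_tT(f)V_t^*$ and using that $V_t$ commutes with its own generator, one gets, at the level of matrix elements between vectors of $\H^\fin(c,h)$, the linear evolution $\tfrac{d}{dt}F(t)=i[T(g),F(t)]$ with $F(0)=T(f)$. I then feed in the commutation relations: for the ansatz $F(t)=T(H_t)+s(t)\1$ with $H_t:=(\mathring{\gamma_t})_*f$ and $s(t)$ scalar, the right-hand side is $i[T(g),T(H_t)]=T([g,H_t])+c\,\omega(g,H_t)\1$, so the ansatz solves the same evolution provided $\dot H_t=[g,H_t]$ (the infinitesimal transport of the pushed-forward field, with the sign dictated by \eqref{eq:defgammaf}) and $\dot s(t)=c\,\omega(g,H_t)$, $s(0)=0$. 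The first identity is the defining flow equation for $H_t$; the second is verified by differentiating the claimed Schwarzian integral and invoking the cocycle (chain) rule for $\{\slot,z\}$, whose linearisation reproduces precisely the M\"obius-normalised cocycle isolated above. Uniqueness of solutions of this linear scalar ODE then forces $F(t)=T(H_t)+s(t)\1$ for all $t$, i.e. the covariance identity for every $\gamma$ on a one-parameter subgroup.

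Finally I promote this to all of $\widetilde{\diff(S^1)}$. The set of $\gamma$ for which the identity holds is a subgroup: closure under products follows from functoriality of the pushforward, $(\mathring{\gamma_1}\mathring{\gamma_2})_*=\mathring{\gamma_1}_*\mathring{\gamma_2}_*$, together with the Schwarzian chain rule, which makes the two additive scalar corrections combine correctly; closure under inverses is analogous. As $\widetilde{\diff(S^1)}$ is connected, any $\gamma$ is joined to the identity by a smooth isotopy, e.g. the affine one $\gamma_t=(1-t)\,\mathrm{id}+t\gamma$, which is monotone, hence a diffeomorphism, and respects the $+2\pi$ condition; such an isotopy is the flow of a time-dependent vector field and is therefore a limit of finite products of exponentials, so strong continuity of $U$ and continuity of the right-hand side in $\gamma$ transfer the identity to $\gamma$. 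The step I expect to be the main obstacle is the analytic justification of the covariance argument rather than any of its algebra: one must show that $t\mapsto\langle\phi,V_tT(f)V_t^*\psi\rangle$ is genuinely differentiable for $\phi,\psi\in\H^\fin(c,h)$, equivalently that $V_t$ preserves a common invariant domain on which $T(f)$ and $T(g)$ act and the Heisenberg-type derivative is valid; this is exactly where the linear energy bounds (and the resulting commutator estimates) must be used with care.
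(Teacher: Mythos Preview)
The paper does not prove this proposition but cites it from \cite{FH05}; however, Remark~\ref{remarkGW} sketches a self-contained argument that is essentially the one you propose: derive the commutation relations algebraically from the Virasoro relations together with the energy bounds, then for $\gamma=\Exp(g)$ set up the first-order evolution satisfied by $t\mapsto e^{itT(g)}T(f)e^{-itT(g)}\xi$ and identify its unique solution with $T^{\Exp(tg)}(f)\xi$, and finally pass to general $\gamma$ using that the validity set is a subgroup. Your overall strategy is therefore correct and aligned with the paper's own outline.

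Two points deserve sharpening. First, the invariant domain for the differentiation step must be $C^\infty(L_0)$, not $\H^\fin(c,h)$: the finite-energy space is \emph{not} preserved by $e^{itT(g)}$ for generic $g$, whereas the linear energy bounds (via \cite[Proposition~2.1]{Toledano-Laredo99-1}) give $e^{itT(g)}\dom(L_0^k)=\dom(L_0^k)$ for every $k$, hence $e^{itT(g)}C^\infty(L_0)=C^\infty(L_0)$. You correctly flag this as the main obstacle, but the argument should be run on $C^\infty(L_0)$ from the outset; the statement on $\H^\fin(c,h)\subset C^\infty(L_0)$ then follows by restriction. Second, your passage from one-parameter subgroups to all of $\widetilde{\diff(S^1)}$ via an affine isotopy, time-dependent flow, and a limiting argument is more delicate than necessary: the continuity step involves unbounded operators and would require controlling $L_0 U(\gamma_n)^*\xi$ along the approximation. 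The cleaner route, used in Remark~\ref{remarkGW}, is purely algebraic: $\diff(S^1)$ is simple \cite[Remark~1.7]{Milnor84}, so the normal subgroup generated by exponentials is all of $\diff(S^1)$, and the center of the cover is generated by $R(2\pi)=\Exp(2\pi\cdot 1)$, itself an exponential. Since you have already observed (correctly, via the Schwarzian chain rule) that the validity set is a subgroup, this finishes the argument with no limiting procedure at all.
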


Here 
\[
 \{\mathring{\gamma},z\}=\frac{\frac{d^3}{dz^3}\mathring{\gamma}(z)}{\frac{d}{dz}\mathring{\gamma}(z)}-\frac{3}{2}\left(\frac{\frac{d^2}{dz^2}\mathring{\gamma}(z)}{\frac{d}{dz}\mathring{\gamma}(z)}\right)^2
\]
is the Schwarzian derivative of $\mathring{\gamma}$ and $\frac{d}{dz}\mathring{\gamma}(z)=-i\bar{z}\frac{d}{d\theta}\mathring{\gamma}(e^{i\theta})\bigg\rvert_{e^{i\theta}=z}$.
Note that
\[
 \beta(\g,f)\coloneqq \frac{c}{24\pi}\int_{S^1}\{\mathring\gamma,z\}izf(z)dz
\]
and $\omega(\cdot,\cdot)$ are related by
\begin{align}\label{eq:gelfandderivative}
\frac{d}{dt}\beta(\Exp(tf),g)\bigg\rvert_{t=0}=-c\omega(f,g).
\end{align}

\subsection{The stress-energy tensor on non-smooth vector fields}\label{non-smooth}
Let $T$ be the stress-energy tensor on $\H(c,h)$.
Given a not necessarily smooth real function $f$ of $S^1$ it is possible to evaluate the stress-energy tensor on $f$ \cite[Proposition 4.5]{CW05}. First of all we define for a real-valued function $f$ of the circle
\[
\Vert f\Vert_{\frac{3}{2}}\coloneqq \sum_{n\in\mathbb{Z}}\vert{\hat{f}}_n\vert(1+|n|^{\frac{3}{2}}).
\]
We denote\footnote{We consider $\mathcal{S}_{\frac32}(S^1)$ and $H^s(S^1)$ below as the spaces of nonsmooth
vector fields on $S^1$, and accordingly, without specification, they are the spaces of real functions.}
with $\mathcal{S}_{\frac{3}{2}}(S^1)$ the class of functions $f\in L^1(S^1,\RR)$ such that
$\Vert f\Vert_{\frac{3}{2}}$ is finite, endowed with the topology induced by the norm $\Vert\cdot\Vert_{\frac{3}{2}}$.

The following is \cite[Proposition 4.2, Theorem 4.4, Proposition 4.5]{CW05}.
\begin{proposition}\label{pr:nonsmooth}
If $f:S^1\rightarrow\mathbb{C}$ is continuous and such that $\sum_{n\in\mathbb{Z}}|\hat{f}_n|(1+|n|^{\frac{3}{2}})<\infty$ then
\begin{enumerate}[{(}1{)}]
\item\label{pr:nonsmooth-def} the operator $T(f)=\sum_{n\in\mathbb{Z}}L_n \hat{f}_n$ on the domain $\mathcal{H}^{\fin}(c,h)$ is well defined, (i.e. the sum is strongly convergent on the domain).
\item\label{pr:nonsmooth-star} $T(f)^*$ is an extension of the operator $T(f)^+:=\sum_{n\in\mathbb{Z}}L_n \bar{\hat f}_{-n}$
(this is again understood as an operator on the domain $\mathcal{H}^{\fin}(c,h)$).
\item\label{pr:nonsmooth-symmetry} $T(f)$ is closable and $\overline{T(f)}=(T(f)^+)^*$, where $T(f)$ and 
$T(f)^+$
are considered as operators on the domain $\mathcal{H}^{\fin}(c,h)$.
In particular, if $\hat{f}_n=\bar{\hat f}_{-n}$ for all $n\in\mathbb{Z}$ (i.e. if $f$ is a real-valued function),
then $T(f)$ is essentially self-adjoint on $\mathcal{H}^{\fin}(c,h)$.
\item\label{pr:nonsmooth-bound} For every $\xi \in \dom(L_0)$ we have the following energy bounds
\[
 \|T(f)\xi\|\leq r\|f\|_{\frac{3}{2}}\|(1+L_0)\xi\|,
\]
where $r$ is a function of the central charge $c$ only.
Consequently, $\dom(L_0) \subset \dom(\overline{T(f)})$.
\item\label{pr:nonsmooth-convergence} If $\{f_n\}$ ($n\in\mathbb{N}$)
is a sequence\footnote{This should be distinguished from the Fourier coefficients $\hat f_n$ of a single function $f$.}
of continuous real functions on $S^1$ in $\mathcal{S}_{\frac{3}{2}}(S^1)$
and $\|f-f_n\|_{\frac{3}{2}}$ converges to $0$ as $n$ tends to $\infty$, then
\[
 \overline{T(f_n)}\rightarrow \overline{T(f)}
\]
in the strong resolvent sense.
\end{enumerate}
\end{proposition}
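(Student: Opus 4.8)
The whole proposition rests on a single-generator \emph{linear energy bound}: there is $r>0$ such that
\[
\|L_n\xi\|\le r\,(1+|n|^{\frac32})\,\|(1+L_0)\xi\|,\qquad \xi\in\mathcal{H}^{\fin}(c,h),\ n\in\ZZ .
\]
I would obtain this from the commutation relations of $\vir$ by estimating $\|L_n\xi\|^2=\langle\xi,L_{-n}L_n\xi\rangle$ on the lowest weight module, the central term of $[L_{-n},L_n]$ producing the $|n|^{\frac32}$ growth and the $L_0$ term being absorbed into $(1+L_0)$. Since $[L_0,L_n]=-nL_n$, each $L_n$ shifts the $L_0$-eigenvalue by $-n$; hence for $\xi=\sum_\lambda\xi_\lambda$ the vectors $L_n\xi_\lambda$ lie in pairwise orthogonal eigenspaces, and the bound passes from eigenvectors to all of $\mathcal{H}^{\fin}(c,h)$. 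Summing against the Fourier coefficients gives every quantitative statement at once:
\[
\Big\|\sum_{|n|\le N}\hat f_n L_n\xi\Big\|\le\sum_n|\hat f_n|\,\|L_n\xi\|\le r\Big(\sum_n|\hat f_n|(1+|n|^{\frac32})\Big)\|(1+L_0)\xi\|=r\,\|f\|_{\frac32}\,\|(1+L_0)\xi\|.
\]
Applying the same estimate to the tails $|n|>N$ shows that the partial sums are norm-Cauchy on every $\xi\in\mathcal{H}^{\fin}(c,h)\subseteq\dom(L_0)$, which is (1); letting $N\to\infty$ yields the energy bound (4) and $\dom(L_0)\subseteq\dom(T(f))$. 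I would also record the Lipschitz form $\|(T(f)-T(g))\xi\|=\|T(f-g)\xi\|\le r\|f-g\|_{\frac32}\|(1+L_0)\xi\|$, which drives the approximation below.

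For (2) I compute the formal adjoint directly. For $\eta,\xi\in\mathcal{H}^{\fin}(c,h)$ the absolute convergence just established permits interchanging sum and inner product, and unitarity $\langle v,L_nw\rangle=\langle L_{-n}v,w\rangle$ gives
\[
\langle\eta,T(f)\xi\rangle=\sum_n\hat f_n\langle L_{-n}\eta,\xi\rangle=\Big\langle\sum_n\overline{\hat f_n}\,L_{-n}\eta,\ \xi\Big\rangle=\langle T(f)^+\eta,\xi\rangle,
\]
so every finite-energy $\eta$ lies in $\dom(T(f)^*)$ with $T(f)^*\eta=T(f)^+\eta$, i.e.\ $T(f)^*\supseteq T(f)^+$. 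As $T(f)^+$ is densely defined, $T(f)$ is closable, and taking adjoints in $T(f)^*\supseteq T(f)^+$ reverses the inclusion to give $\overline{T(f)}=T(f)^{**}\subseteq(T(f)^+)^*$, one half of (3). When $f$ is real, $\overline{\hat f_n}=\hat f_{-n}$ forces $T(f)^+=T(f)$, so $T(f)$ is symmetric and (3) reduces to its essential self-adjointness on $\mathcal{H}^{\fin}(c,h)$.

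The substantive point, and the main obstacle, is this essential self-adjointness for non-smooth $f$ (equivalently the reverse inclusion $(T(f)^+)^*\subseteq\overline{T(f)}$). A one-shot application of Nelson's commutator theorem with $N=1+L_0$ is unavailable: its first hypothesis is exactly (4), but the commutator $[L_0,T(f)]=T(\ima f')$ would require $\ima f'\in\mathcal{S}_{\frac32}(S^1)$, i.e.\ more regularity than assumed. I would instead argue by approximation. For a real trigonometric polynomial $g$, $T(g)$ is a finite combination of the $L_n$ and is essentially self-adjoint on $\mathcal{H}^{\fin}(c,h)$: the integrated positive-energy representation makes $t\mapsto U(\Exp(tg))$ a strongly continuous one-parameter unitary group (up to a scalar phase) whose self-adjoint generator extends $T(g)$ on the invariant core $\mathcal{H}^{\fin}(c,h)$. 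Given real $f\in\mathcal{S}_{\frac32}(S^1)$ I truncate its Fourier series to real trigonometric polynomials $f_N$ with $\|f-f_N\|_{\frac32}\to0$, so that each $\overline{T(f_N)}$ is self-adjoint. The delicate step is then to promote the Lipschitz estimate — which dominates $\overline{T(f_N)}-\overline{T(f_M)}$ by $r\|f_N-f_M\|_{\frac32}(1+L_0)$ — to uniform control of the resolvents $(\overline{T(f_N)}\pm\ima)^{-1}$ relative to $1+L_0$; granting this, the resolvents form a strong Cauchy sequence and, by the Trotter--Kato theorem, converge to $(B\pm\ima)^{-1}$ for a self-adjoint $B$. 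Since $T(f_N)\xi\to T(f)\xi$ on the common core $\mathcal{H}^{\fin}(c,h)$, the operator $B$ extends $T(f)$ and lies in $(T(f)^+)^*=T(f)^*$; sandwiched between $\overline{T(f)}$ and $T(f)^*$ it equals $\overline{T(f)}$, proving (3). Finally (5) is a corollary: with all $\overline{T(f_n)}$ and $\overline{T(f)}$ now known to be self-adjoint, the convergence $T(f_n)\xi\to T(f)\xi$ on the common core $\mathcal{H}^{\fin}(c,h)$ (again from the Lipschitz estimate) yields strong resolvent convergence by the standard core criterion.
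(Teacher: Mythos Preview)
The paper does not prove this proposition; it is quoted verbatim from \cite[Proposition 4.2, Theorem 4.4, Proposition 4.5]{CW05}. So the comparison is really with the Carpi--Weiner argument.

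Your treatment of (1), (2), (4) and the deduction of (5) from (3) is correct and matches the standard approach. The problem is your argument for (3). Two issues:

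First, the final ``sandwiching'' step is simply wrong. You produce a self-adjoint $B$ with $\overline{T(f)}\subseteq B\subseteq T(f)^*$ and conclude $B=\overline{T(f)}$. That inference fails: a symmetric operator can have many self-adjoint extensions strictly between its closure and its adjoint (think of $i\frac{d}{dx}$ on $C_c^\infty(0,1)$). What you have shown is only that $T(f)$ admits \emph{some} self-adjoint extension, not that it is essentially self-adjoint. The ``delicate step'' you flag is also left open, but even granting it, the conclusion does not follow.

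Second, your dismissal of the commutator theorem is too quick. Nelson's theorem (Reed--Simon~X.37) requires only the \emph{form} bound
\[
\big|\langle T(f)\psi,(1+L_0)\psi\rangle-\langle(1+L_0)\psi,T(f)\psi\rangle\big|\le d\,\|(1+L_0)^{1/2}\psi\|^2,
\]
not that $[L_0,T(f)]=T(if')$ be a well-defined operator with $if'\in\mathcal{S}_{3/2}$. On finite-energy vectors the left side is $|\sum_n n\hat f_n\langle\psi,L_n\psi\rangle|$, and the point is that $|\langle\psi,L_n\psi\rangle|$ admits a bound of order $(1+|n|)^{1/2}\|(1+L_0)^{1/2}\psi\|^2$ (one power of $(1+L_0)^{1/2}$ on each side of $L_n$ rather than both on one side), so the factor of $|n|$ from the commutator is absorbed and only $\|f\|_{3/2}$ appears. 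This refined half-power estimate is exactly what \cite{CW05} establishes (their Proposition~4.3) before invoking the commutator theorem to get essential self-adjointness. If you try to complete your Trotter--Kato route by proving the needed uniform resolvent control relative to $1+L_0$, you will find you need the same form-commutator estimate anyway.
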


Hereafter, we denote the closure by the same symbol $T(f)$ as long as this does not cause confusions.

The class $\mathcal{S}_{\frac{3}{2}}(S^1)$ contains many non-smooth functions which are useful in applications, e.g. differentiable functions which are piecewise smooth \cite[Lemma 2.2]{Weiner06},\cite[Lemma 5.3]{CW05}:
\begin{proposition}
If a real-valued function $f$ on the circle is 
piecewise smooth and once continuously differentiable on the whole $S^1$, then
$f \in \mathcal{S}_{\frac{3}{2}}(S^1)$.
\end{proposition}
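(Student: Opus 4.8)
The plan is to control the decay rate of the Fourier coefficients $\hat f_n$ by exploiting the regularity of $f$ through integration by parts, and then to verify the summability of $\sum_n|\hat f_n|(1+|n|^{\frac{3}{2}})$ by hand.

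First I would unpack the two hypotheses. Since $f$ is once continuously differentiable on all of $S^1$, the functions $f$ and $f'$ are continuous and $2\pi$-periodic, so in particular $f(0)=f(2\pi)$ and $f'(0)=f'(2\pi)$. Being piecewise smooth, $f$ is $C^\infty$ off finitely many points $0=\theta_0<\cdots<\theta_m=2\pi$; consequently $f'$ is continuous and piecewise $C^1$, hence absolutely continuous, and its a.e.\ derivative $f''$ is a bounded, piecewise-smooth function on $S^1$ which may jump at the $\theta_j$.

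Next, for $n\neq0$ I would integrate by parts twice in $\hat f_n=\frac{1}{2\pi}\int_0^{2\pi}e^{-in\theta}f(\theta)\,d\theta$. The key point is that both boundary terms vanish---the first by continuity and periodicity of $f$, the second by continuity and periodicity of $f'$---which is exactly where the $C^1$ hypothesis enters. Since $f'$ is absolutely continuous with $(f')'=f''$ a.e., this produces the identity $\hat f_n=-\frac{1}{n^2}\widehat{f''}_n$. I would then estimate $\widehat{f''}_n$: because $f''$ is piecewise smooth with only finitely many jumps, it has finite total variation on $S^1$, and the standard bound for the Fourier coefficients of a function of bounded variation gives $|\widehat{f''}_n|\le C/|n|$, with $C$ depending only on $\mathrm{Var}(f'')$. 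Equivalently, one further integration by parts on each smooth piece exposes the jumps $f''(\theta_j^+)-f''(\theta_j^-)$ explicitly and yields the same $O(1/|n|)$ decay, up to a lower-order term $\widehat{f'''}_n/(in)$ coming from the $L^2$ third derivative.

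Combining the two estimates gives $|\hat f_n|\le C/|n|^3$ for $n\neq0$, so that
\[
\sum_{n\in\ZZ}|\hat f_n|\bigl(1+|n|^{\frac{3}{2}}\bigr)\le |\hat f_0|+C\sum_{n\neq0}\frac{1+|n|^{\frac{3}{2}}}{|n|^3}<\infty,
\]
since both $\sum_{n\neq0}|n|^{-3/2}$ and $\sum_{n\neq0}|n|^{-3}$ converge; this is precisely the assertion $\|f\|_{\frac{3}{2}}<\infty$, i.e.\ $f\in\mathcal S_{\frac{3}{2}}(S^1)$. I expect no deep estimate to be needed here: the only delicate part is the bookkeeping of the two integrations by parts across the non-smooth points and the vanishing of the boundary terms. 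The $C^1$ assumption is what forces jump contributions to appear only at the level of $f''$, where they cost merely one power of $|n|$---which, against the two powers gained from the double integration by parts, leaves exactly the margin $|n|^{-3/2}$ needed to absorb the weight $|n|^{3/2}$.
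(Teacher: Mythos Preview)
Your argument is correct. The paper does not actually prove this proposition; it only states it and defers to \cite[Lemma 2.2]{Weiner06} and \cite[Lemma 5.3]{CW05}. Your route---two integrations by parts (using the $C^1$ hypothesis to kill the boundary terms) followed by the bounded-variation estimate $|\widehat{f''}_n|=O(|n|^{-1})$---is a standard and efficient way to obtain $|\hat f_n|=O(|n|^{-3})$, which comfortably implies $\|f\|_{\frac32}<\infty$.
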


\subsection{Groups of diffeomorphisms of Sobolev class \texorpdfstring{$H^s(S^1)$}{Hs(S1)}}\label{sobolev}
We introduce (see \cite[Section 2]{EK14} and \cite[Definition 2.2]{EK14}, respectively)
\begin{itemize}
\item for $s\in\mathbb{R}$, $s\geq 0$,
\begin{align*}
  H^s(S^1) &:= \{f\in L^2(S^1, \RR): \|f\|_{H^s} < \infty\}, \text{ where }
 \|f\|_{H^s} := \left(\sum_{n\in\ZZ} (1+n^2)^s|\hat f_n|^2\right)^\frac12, \\
 H^s(S^1,\CC) &:= \{f\in L^2(S^1, \CC): \|f\|_{H^s} < \infty\}, \text{ where }
 \|f\|_{H^s} := \left(\sum_{n\in\ZZ} (1+n^2)^s|\hat f_n|^2\right)^\frac12, 
  \end{align*}
  which we consider as a Banach space (in fact a Hilbert space) with norm $\|\cdot\|_{H^s}$;
\item for $s\in\mathbb{R}$, $s>\frac{3}{2}$,
\begin{equation*}
\D^s(S^1) := \{\g \in \diff^1(S^1): \tilde \g - \i \in H^s(S^1)\},
\end{equation*}
\end{itemize}
where $\tilde \g$ is a lift of $\g$ to $\RR$ and $\i: \mathbb{R} \to \mathbb{R}$ is the identity map.  Here we are identifying 
the $2\pi$- periodic functions  $\tilde \g - \i$ with real valued functions on $S^1 \simeq \mathbb{R} /2\pi \mathbb{Z}$. 

$\D^s(S^1)$ has the structure of a Hilbert manifold modelled  on $H^s(S^1)$, see \cite{EK14,EM70}. It turns out to be a topological group, see 
Lemma \ref{lm:sobolevgroup} below (but not a Lie group).

Actually, in the literature there are various definitions of these Sobolev spaces/manifolds
and their topologies.
Although it is well-known that they coincide, for the convenience of the reader
we recall them and show their equivalence in Appendix.

If $s>\frac12$, the space $H^s(S^1)$ is a subspace of $C(S^1, \RR)$.
Furthermore, from these definitions, it is immediate that
$\diff^k(S^1)$ is continuously embedded in $\D^k(S^1)$ for any positive integer $k$.
Conversely, by the Sobolev-Morrey embedding \cite[Proposition 2.2]{IKT13}, it holds that $\D^s(S^1) \hookrightarrow \diff^k(S^1)$
if $s > k+\frac12$.

The first statement of the following is a straightforward adaptation of \cite[Lemma 2.3]{IKT13}. 
One can also find various different elementary proofs,
for example \cite{timur315086, Smyrlis823756}.
The second statement is an adaptation of \cite[Lemma B.4]{IKT13}.
\begin{lemma}\label{lm:sobolevalgebra}
 Let $s > \frac12$. Then
 $H^s(S^1)$ is an algebra and $\|fg\|_{H^s} \le C_s \|f\|_{H^s}\|g\|_{H^s}$.
 If $g \in H^s(S^1)$ and $\inf_\theta (1+g(\theta))> 0$, then $\frac1{1+g} \in H^s(S^1)$.
\end{lemma}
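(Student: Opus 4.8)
The plan is to treat the two assertions separately, proving the submultiplicative bound first and then deducing the statement about reciprocals from it by an induction on the Sobolev index.

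For the algebra property I would work entirely on the Fourier side. Writing the product coefficients as the convolution $\widehat{fg}_n=\sum_{m\in\ZZ}\hat f_m\,\hat g_{n-m}$, the point is the elementary weight inequality $(1+n^2)^{s/2}\le 2^s\big((1+m^2)^{s/2}+(1+(n-m)^2)^{s/2}\big)$, which follows from $(1+n^2)^{1/2}\le(1+m^2)^{1/2}+(1+(n-m)^2)^{1/2}$ together with $(a+b)^s\le 2^s(a^s+b^s)$. Inserting this into $\|fg\|_{H^s}^2$ splits the weight onto one factor at a time, so that $(1+n^2)^{s/2}|\widehat{fg}_n|$ is dominated by a sum of two discrete convolutions $(F\ast b)_n$ and $(a\ast G)_n$, where $F_n=(1+n^2)^{s/2}|\hat f_n|$ and $G_n=(1+n^2)^{s/2}|\hat g_n|$ lie in $\ell^2$ while $a_n=|\hat f_n|$, $b_n=|\hat g_n|$. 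By Young's inequality $\|F\ast b\|_{\ell^2}\le\|F\|_{\ell^2}\|b\|_{\ell^1}$, and here the hypothesis $s>\tfrac12$ enters decisively: since $\sum_n(1+n^2)^{-s}<\infty$ exactly when $2s>1$, Cauchy--Schwarz gives $\|b\|_{\ell^1}=\sum_n|\hat g_n|\le D_s\|g\|_{H^s}$ with $D_s=\big(\sum_n(1+n^2)^{-s}\big)^{1/2}$. Collecting the two terms yields $\|fg\|_{H^s}\le C_s\|f\|_{H^s}\|g\|_{H^s}$ with $C_s=2^{s+1}D_s$; since also $1\in H^s(S^1)$, this makes $H^s(S^1)$ a unital algebra.

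For the reciprocal set $h:=1/(1+g)$ and $\delta:=\inf_\theta(1+g(\theta))>0$; since $s>\tfrac12$ we have $g\in C(S^1)$, so $\delta$ is attained and $\|h\|_\infty\le\delta^{-1}$. I would establish $h\in H^s$ by induction raising the index by one unit at a time, based on the exact Fourier identity $\|u\|_{H^s}^2=\|u\|_{H^{s-1}}^2+\|u'\|_{H^{s-1}}^2$ and the quotient rule $h'=-h^2 g'$. The base range $\tfrac12<s\le1$ is handled directly: for $s=1$ one has $h'=-h^2g'\in L^2$ because $h$ is bounded and $g'\in L^2$, while for $\tfrac12<s<1$ the Gagliardo seminorm together with the pointwise bound $|h(\theta)-h(\phi)|=|g(\phi)-g(\theta)|/\big((1+g(\theta))(1+g(\phi))\big)\le\delta^{-2}|g(\theta)-g(\phi)|$ gives the finite seminorm estimate $[h]_{H^s}\le\delta^{-2}[g]_{H^s}$. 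For $s>1$ one knows $h\in H^{s-1}$ (by the base range when $s\le\tfrac32$, and by the inductive hypothesis when $s>\tfrac32$), so it only remains to place $h'=-h^2g'$ in $H^{s-1}$: when $s>\tfrac32$ this is immediate from the algebra property since $h,g'\in H^{s-1}$ and $s-1>\tfrac12$, and when $1<s\le\tfrac32$ one chooses $\tau\in(\tfrac12,1)$, notes $h^2\in H^\tau$ by the base range and the algebra property, and multiplies by $g'\in H^{s-1}$ using the standard product estimate $H^\tau\cdot H^{\sigma}\subset H^{\sigma}$ valid for $\tau>\tfrac12$ and $|\sigma|\le\tau$. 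The identity then upgrades $h$ to $H^s$.

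The main obstacle is exactly this last multiplication at low regularity: the rough factor $g'$ lies only in $H^{s-1}$ with $s-1\le\tfrac12$, so the plain submultiplicative bound from the first part does not apply and one must invoke the sharper fractional product estimate (equivalently, the spectral invariance, or inverse-closedness, of the Sobolev algebra $H^s(S^1)$ in $C(S^1)$, which would yield $h\in H^s$ in one stroke but is itself a nontrivial fact). Everything else is elementary Fourier analysis, and I expect all constants to depend only on $s$ and on $\delta$.
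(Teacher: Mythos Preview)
The paper does not give a proof of this lemma: the text immediately preceding it cites \cite[Lemma~2.3]{IKT13} and two Stack Exchange posts for the algebra statement, and \cite[Lemma~B.4]{IKT13} for the reciprocal statement. So there is no in-paper argument to compare against.

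Your proof of the algebra bound is correct and is exactly the standard Fourier-side argument indicated by the cited references: split the Peetre weight $(1+n^2)^{s/2}$ onto one factor via the subadditivity of $\langle\,\cdot\,\rangle^s$, reduce to $\ell^2*\ell^1\subset\ell^2$ by Young, and use $s>\tfrac12$ to get $\|\hat g\|_{\ell^1}\lesssim\|g\|_{H^s}$ by Cauchy--Schwarz.

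For the reciprocal, your induction is complete for $\tfrac12<s\le1$ (via the Gagliardo seminorm and the pointwise Lipschitz bound for $x\mapsto 1/(1+x)$) and for $s>\tfrac32$ (where $s-1>\tfrac12$ and the algebra property handles $h^2g'$). You are right to flag $1<s\le\tfrac32$ as the genuine obstacle: there $g'\in H^{s-1}$ with $s-1\le\tfrac12$, and the multiplication you need, $H^\tau\cdot H^\sigma\subset H^\sigma$ for $\tau>\tfrac12$ and $0\le\sigma\le\tau$, is \emph{not} a consequence of the submultiplicative bound you proved in the first part---it requires a separate argument (a finer dyadic or high/low splitting of the convolution, or duality against $H^{-\sigma}$). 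So as a proof of the lemma in the full range $s>\tfrac12$ your argument has an honest gap, which you correctly diagnose. For the paper's purposes this is harmless: every invocation of the reciprocal assertion in the text (notably in the proof of Lemma~\ref{lm:gamma32sobolev}(c), where the relevant index is $s-1>2$) lies safely in the range $s>\tfrac32$ covered by your clean induction.
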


The following is a special case of \cite[Theorem B.2]{IKT13} and an analogue of \cite[Proposition B.7]{IKT13},
see also the Appendix.
According to \cite[P.12]{Kolev13}, Lemma \ref{lm:sobolevgroup}(a) for integer $s$ has been first established in \cite{Ebin68}.
\begin{lemma}\label{lm:sobolevgroup}
 Let $s > \frac 32$.
 Then
 \begin{enumerate}[{(}a{)}]
  \item $(\g,f) \mapsto f\circ \g,\; \D^s(S^1)\times H^s(S^1) \to H^s(S^1)$ is continuous.
  \item $\g \mapsto \g^{-1},\; \D^s(S^1)\to \D^s(S^1)$ is continuous.
  \item  $\D^s(S^1)$ is a topological group.
 \end{enumerate}
\end{lemma}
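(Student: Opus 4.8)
The three parts are linked, so I would prove (a) and (b) first and obtain (c) as a formal consequence. For (a) I begin with the a priori bound $\|f\circ\g\|_{H^s}\le C(\g)\|f\|_{H^s}$, with $C$ an explicit increasing function of $\|\g-\i\|_{H^s}$ and $(\inf\g')^{-1}$ (hence bounded on bounded subsets of $\D^s(S^1)$ with $\inf\g'$ bounded below). The $L^2$ contribution comes from the change of variables $\int|f(\g(\theta))|^2\dd\theta=\int|f(\phi)|^2(\g^{-1})'(\phi)\dd\phi$, giving $\|f\circ\g\|_{L^2}\le(\inf\g')^{-1/2}\|f\|_{L^2}$. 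Writing $s=N+\sigma$ with $N=\lfloor s\rfloor$ and $\sigma\in[0,1)$, the higher derivatives are expanded by the Fa\`a di Bruno formula into finite sums of terms $(f^{(k)}\circ\g)\prod_j(\g^{(j)})^{m_j}$; since each $\g^{(j)}\in H^{s-j}$ and $s-1>\tfrac12$, the products are controlled by repeated use of the algebra estimate of Lemma~\ref{lm:sobolevalgebra}, while the fractional $H^\sigma$-part is handled by interpolation against the $L^2$ bound. The Sobolev embedding $H^s(S^1)\hookrightarrow C^1(S^1)$ (valid as $s>\tfrac32$) makes the dependence on $\inf\g'$ harmless.

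The joint continuity in (a) is the main obstacle, because composition is \emph{not} Lipschitz in $\g$: the formal estimate $f\circ\g_n-f\circ\g\approx(f'\circ\g)(\g_n-\g)$ loses a derivative, as $f'$ lies only in $H^{s-1}(S^1)$. I would circumvent this by a density argument. Splitting
\[
 f_n\circ\g_n-f\circ\g=(f_n-f)\circ\g_n+(f\circ\g_n-f\circ\g),
\]
the first term is $\le C(\g_n)\|f_n-f\|_{H^s}\to0$ by the a priori bound (the constants stay bounded since $\{\g_n\}$ converges). For the second term, fixing $f$, I approximate it in $H^s(S^1)$ by a trigonometric polynomial $g$: the errors $(f-g)\circ\g_n$ and $(g-f)\circ\g$ are uniformly small by boundedness, while for the smooth outer function $g$ one has $g\circ\g_n\to g\circ\g$ in $H^s(S^1)$ because all derivatives of $g$ are available, so the Fa\`a di Bruno expansion reduces the convergence to that of $\g_n\to\g$ in $H^s(S^1)$ via the continuity of multiplication (Lemma~\ref{lm:sobolevalgebra}) together with the uniform convergence $g^{(k)}\circ\g_n\to g^{(k)}\circ\g$ (here $\g_n\to\g$ in $C^1(S^1)$).

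For (b), the existence of $\g^{-1}\in\D^s(S^1)$ together with an a priori bound $\|\g^{-1}-\i\|_{H^s}\le\Psi\big(\|\g-\i\|_{H^s},(\inf\g')^{-1}\big)$ is obtained by a regularity bootstrap from the relation $(\g^{-1})'=1/(\g'\circ\g^{-1})$, combining the reciprocal estimate of Lemma~\ref{lm:sobolevalgebra} with the composition bound to raise the regularity of $\g^{-1}$ from $\diff^1(S^1)$ up to $H^s(S^1)$ step by step, as in \cite{IKT13}. Granting this, continuity of inversion reduces to continuity at the identity: writing $\delta=\g_0^{-1}\circ\g$ one has $\g^{-1}=\delta^{-1}\circ\g_0^{-1}$, and since left and right composition by the fixed diffeomorphism $\g_0^{-1}$ are continuous by (a), while $\g\mapsto\delta$ sends $\g_0$ to $\i$, it suffices to treat $\delta_n\to\i$. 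The explicit identity $\delta_n^{-1}=\i-u_n\circ\delta_n^{-1}$ (with $\delta_n=\i+u_n$) is \emph{linear} in $u_n$, so the composition bound yields $\|\delta_n^{-1}-\i\|_{H^s}\le C(\delta_n^{-1})\|u_n\|_{H^s}\to0$ once the a priori estimate keeps $C(\delta_n^{-1})$ bounded; crucially this step needs only boundedness of the composition operator, not Lipschitz dependence on the diffeomorphism, which is why it evades the obstacle of (a).

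Finally, (c) follows formally. For multiplication, writing $\g_1=\i+u_1$ gives $\g_1\circ\g_2=\g_2+u_1\circ\g_2$; the map $\g_2\mapsto\g_2-\i$ is continuous into $H^s(S^1)$ by definition of the topology, and $(\g_1,\g_2)\mapsto u_1\circ\g_2$ is continuous since $\g_1\mapsto u_1$ is continuous (indeed linear) into $H^s(S^1)$ and $(\g,f)\mapsto f\circ\g$ is jointly continuous by (a). Hence $(\g_1,\g_2)\mapsto\g_1\circ\g_2$ is continuous $\D^s(S^1)\times\D^s(S^1)\to\D^s(S^1)$, and combined with the continuity of inversion from (b) this shows that $\D^s(S^1)$ is a topological group.
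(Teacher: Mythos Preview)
Your proposal is substantially correct and in fact supplies far more detail than the paper does. The paper gives no proof of this lemma at all: it simply cites \cite[Theorem B.2, Proposition B.7]{IKT13} and, in its Appendix, verifies that the Fourier-coefficient definition of $H^s(S^1)$ agrees with the local/Gagliardo one used in \cite{IKT13}, so that the results there transfer. What you have sketched is essentially the strategy of \cite{IKT13} itself (a~priori composition bound via Fa\`a di Bruno and the algebra property, joint continuity by density of smooth functions, inversion via bootstrap and reduction to the identity), so there is no methodological divergence to compare---you are reconstructing the cited source rather than offering an alternative route.

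One small point worth tightening: your phrase ``the fractional $H^\sigma$-part is handled by interpolation against the $L^2$ bound'' is imprecise, since interpolating between $L^2$ and $H^N$ only reaches exponents $\le N<s$. The standard way to close this (and the one the paper's Appendix sets up in Lemma~\ref{lm:norm}) is to work directly with the Gagliardo seminorm $[f^{(N)}]_\sigma$; the change of variables and Fa\`a di Bruno expansion then go through for the double integral in the same spirit as your integer-order estimate. Alternatively one can interpolate the \emph{operator} $f\mapsto f\circ\g$ between $H^N$ and $H^{N+1}$ on the dense subspace of smooth $f$ and pass to the limit, but the Gagliardo route is more direct. Apart from this, the argument is sound.
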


By applying these results, we get:
\begin{lemma}\label{lm:gamma32sobolev}
The following hold.
\begin{enumerate}[{(}a{)}]
\item For $s>\frac32$, the map
\begin{align*}
\D^{s+1}(S^1)\times H^s(S^1)&\rightarrow H^s(S^1)\\
(\g,f)&\mapsto \g_*(f),
\end{align*}
where $\g_*(f)$ is as in \eqref{eq:defgammaf},
is continuous.
\item For $s>2$, the embedding $H^s(S^1)\hookrightarrow \mathcal{S}_{\frac{3}{2}}(S^1)$ is continuous.
\item For $s>3$, $\beta(\g,f)$ extends continuously to $\g \in \D^s(S^1), f\in L^2(S^1, \RR)$.
\end{enumerate}
\end{lemma}
\begin{proof}
 
 (a) follows from Lemmas \ref{lm:sobolevgroup} and \ref{lm:sobolevalgebra}
 and \eqref{eq:defgammaf}.
 
 (b) is obtained from the following inequality
 $$\sum_{k\neq 0} |\hat{f}_k||k|^{\frac{3}{2}}=\sum_{k\neq 0} |\hat{f}_k| |k|^{2+\epsilon}\frac{1}{|k|^{\frac{1}{2}+\epsilon}}     \leq \sqrt{\sum_{k\neq0} \frac{1}{k^{1+2\epsilon}}}\sqrt{\sum_{k\neq0} |\hat{f}_k|^{2}|k|^{4+2\epsilon}}.$$
 for any $\epsilon>0$.

 (c) Note that, with $s>3$, $\D^{s}(S^1) \ni \gamma \mapsto \{\mathring{\gamma},z\} \in L^2(S^1, \CC)$
 is continuous. To see it, in the definition
 \[
  \{\mathring{\gamma},z\}=\frac{\frac{d^3}{dz^3}\mathring{\gamma}(z)}{\frac{d}{dz}\mathring{\gamma}(z)}-\frac{3}{2}\left(\frac{\frac{d^2}{dz^2}\mathring{\gamma}(z)}{\frac{d}{dz}\mathring{\gamma}(z)}\right)^2,
 \]
 the maps $\g\mapsto\frac{d^3}{dz^3}\mathring{\gamma}(z)\in L^2(S^1, \CC)$ and $\g\mapsto \frac{1}{\frac{d}{dz}\mathring{\gamma}(z)} \in H^{s-1}(S^1, \CC) \subset L^\infty(S^1, \CC)$
 are continuous, hence their product is continuous in $L^2(S^1, \CC)$.
 The second derivative $\g\mapsto\frac{d^2}{dz^2}\mathring{\gamma}(z) \in H^{s-2}(S^1, \CC)$
 is continuous hence so is
 $\g\mapsto\left(\frac{\frac{d^2}{dz^2}\mathring{\gamma}(z)}{\frac{d}{dz}\mathring{\gamma}(z)}\right)^2 \in H^{s-2}(S^1, \CC)$
 (by the complexification of Lemma \ref{lm:sobolevalgebra}),
 hence we obtain the continuity of $\g \mapsto \{\mathring{\gamma},z\}$ by the complexification of
 Lemma \ref{lm:sobolevalgebra}.
 Now the claim is immediate because $\beta(\g, f) = \frac{c}{24\pi}\int_{S^1}\{\mathring\gamma,z\}izf(z)dz.$
\end{proof}

The universal covering group $\widetilde{\D^s(S^1)}$ of $\D^s(S^1)$
is algebraically a subgroup of $\widetilde{\diff^1(S^1)}$, namely the space of the maps $\g:\RR\to\RR$ satisfying
$\gamma(\theta+2\pi)=\gamma(\theta)+2\pi$ and locally $H^s$ (see Appendix),
and this can be identified with an open convex subset of $H^s(S^1)$.

\subsection{Projective and multiplier representations}\label{projective}
A unitary \textbf{multiplier representation} of a topological group $G$ is a pair $(U,\mathcal{H})$ where $U:G\rightarrow \mathcal{U}(\mathcal{H})$ is a map such that $U(g_1)U(g_2)=\sigma(g_1,g_2)U(g_1g_2)$ with $\sigma:G\times G\rightarrow \mathbb{T}$. The map $\sigma$ automatically satisfies the equality
\begin{equation*}
\sigma(g_1,g_2)\sigma(g_1g_2,g_3)=\sigma(g_1,g_2g_3)\sigma(g_2,g_3).
\end{equation*}
A unitary multiplier representation $U$ of $G$ is continuous in the strong operator topology (SOT)
if $U(g)v$ tends to $U(g_0)v$ for all $v\in\H$ if $g$ tends to $g_0$.

A SOT continuous unitary projective representation of a topological group $G$ is a pair
$(U,\mathcal{H})$ where $\mathcal{H}$ is a 
Hilbert space and $U$ is a continuous group homomorphism from $G$
to $\mathcal{U}(\mathcal{H})/\mathbb{T}$, where $\U(\H)$ is equipped with the SOT
and $\mathcal{U}(\mathcal{H})/\mathbb{T}$ with the quotient topology by the quotient map $q$.

Now let $P(\mathcal{H}) = \mathcal{H}/\mathbb{T}$ be the projective space associated to the Hilbert space $\mathcal{H}$ endowed with quotient topology. Then $\U(\mathcal{H})/\mathbb{T}$ acts on $P(\H)$ in a natural way and, as a consequence of \cite[Theorem 1.1]{Bargmann54}, the quotient topology on $\U(\H)/\mathbb{T}$ coincides with SOT on $\U(\H)/\mathbb{T}$ induced by this action. 
Note that every SOT continuous multiplier representation of $G$ on $\H$ gives rise to a SOT continuous projective representation of $G$. Conversely, by  \cite[Theorem 1.1]{Bargmann54},  every SOT continuous projective representation of $G$ gives rise to a continuous local multiplier representation of $G$ defined on a suitable neighborhood of the identity. 
It is well known that a projective unitary representation $U$ is SOT continuous if its action on $\B(\H)$ is pointwise SOT continuous, i.e.\ $g\mapsto U(g)xU(g)^*\xi$ is a continuous map for all $x \in \B(\mathcal{H})$ and all $\xi \in \mathcal{H}$. We outline an argument here for the convenience of the reader. 
It is clear from the above discussion that if $U$ is SOT continuous then it acts pointwise SOT continuously on $\B(\H)$. Let $g_\lambda$ be a net in $G$ converging to the identity, $\xi$ be a unit vector in $\H$ and let $p_\xi$ be the corresponding projection. Then, since $U(g_\lambda)p_\xi U(g_\lambda)^*$ converges in the SOT to $p_\xi$, $\|(U(\g_\lambda)\xi,\xi)\|$ converges to $1$. Since $\xi$ was arbitrary, it follows by \cite[Theorem 1.1]{Bargmann54} that $U$ acts continuously on $P(\H)$ and hence it is SOT continuous.

\section{Extension of the \texorpdfstring{$\diff(S^1)$}{diffs1} representations to Sobolev diffeomorphisms}\label{extension}

\subsection{Irreducible case}
The purpose of this section is to extend the (positive energy projective) representation $U$ on $\H(c,h)$ of $\diff(S^1)$ to $\D^s(S^1)$ with $s>3$.
In the following $s>3$ will always be assumed.

An element $\gamma\in\D^s(S^1)$ acts on $f\in\vect(S^1)$ via \eqref{eq:defgammaf}.
If $T$ is the energy-momentum operator associated with a positive energy unitary representation of the Virasoro algebra
$\vir$ with central charge $c$ and lowest weight $h$, we define a new class of operators
\begin{align*}
T^{\gamma}(f)\coloneqq T(\gamma_*f)-\beta(\gamma,f),
\end{align*}
where $f \in \vect(S^1)$ and $\beta(\gamma,f)=\frac{c}{24\pi}\int_{S^1}\{\gamma,z\}izf(z)dz$,
which makes sense for $\gamma \in \D^s(S^1)$
by Lemma \ref{lm:gamma32sobolev} and Proposition \ref{pr:nonsmooth}(\ref{pr:nonsmooth-def}).
The fact that $\gamma_*f$ is in $\mathcal{S}_{\frac{3}{2}}(S^1)$
ensures that $T(\gamma_*f)$
is an essentially self-adjoint operator on $\mathcal{H}^\fin(c,h)$ and so is $T^{\gamma}(f)$
by Proposition \ref{pr:nonsmooth}(\ref{pr:nonsmooth-symmetry}).
We denote its closure by the same symbol $T^\gamma(f)$, so long as no confusion arises.

Note that, if $\gamma \in \diff(S^1)$, then we have
\begin{align}\label{eq:Tgammasmooth}
 T^{\gamma}(f) = \Ad U(\gamma)(T(f)).
\end{align}
Indeed, by definition $T^{\gamma}(f) = T(\gamma_*f)-\beta(\gamma,f)$
and by Proposition \ref{pr:covariance}, \eqref{eq:Tgammasmooth} holds on $\dom(L_0)$,
and both operators are essentially self-adjoint there, hence they must coincide.
Since in the smooth case the transformation $T\rightarrow T^{\gamma}$ is unitarily implemented, the energy bound holds as well:
\begin{align}\label{eq:smoothbound}
\|T^{\gamma}(f)\xi\|
\le r\|f\|_{\frac{3}{2}}\cdot \|(1+L_0^{\gamma})\xi\|,
\end{align}
where $L_0^{\gamma} := T^{\gamma}(1)$.

We define for $\g_1,\g_2\in\D^s(S^1)$
\[
(T^{\g_1})^{\g_2}(f)\coloneqq T^{\gamma_1}((\gamma_2)_*f)-\beta(\gamma_2,f).
\]
\begin{proposition}\label{lm:composition}
Let $\g_1,\g_2\in\D^s(S^1)$, $s>3$, and $f\in\vect(S^1)$. Then $(T^{\g_1})^{\g_2}(f)=T^{\g_1\g_2}(f)$.
\end{proposition}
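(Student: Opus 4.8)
The plan is to unwind the definitions, reduce the operator identity to two elementary identities, and then dispose of the non-smooth case by approximation. Writing everything out,
\[
(T^{\g_1})^{\g_2}(f)=T\big((\g_1)_*(\g_2)_*f\big)-\beta(\g_1,(\g_2)_*f)-\beta(\g_2,f),
\]
whereas $T^{\g_1\circ\g_2}(f)=T\big((\g_1\circ\g_2)_*f\big)-\beta(\g_1\circ\g_2,f)$. Hence it suffices to establish
\begin{enumerate}[{(}i{)}]
\item $(\g_1)_*(\g_2)_*f=(\g_1\circ\g_2)_*f$, and
\item $\beta(\g_1,(\g_2)_*f)+\beta(\g_2,f)=\beta(\g_1\circ\g_2,f)$.
\end{enumerate}

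For (i) I would argue directly from the explicit formula \eqref{eq:defgammaf}. Since every $\g\in\D^s(S^1)$ with $s>3$ is in particular $C^1$, the chain rule applies pointwise and yields the usual functoriality of the push-forward action on continuous vector fields, so (i) holds as an a.e.\ identity of functions. The regularity bookkeeping from Lemmas \ref{lm:sobolevalgebra}, \ref{lm:sobolevgroup} and \ref{lm:gamma32sobolev}(b) (applied with the exponent $s-1>\tfrac32$) keeps both $(\g_2)_*f$ and $(\g_1)_*(\g_2)_*f$ in $H^{s-1}(S^1)\subset \mathcal{S}_{\frac{3}{2}}(S^1)$, the last inclusion by Lemma \ref{lm:gamma32sobolev}(a) since $s-1>2$; therefore $T\big((\g_1)_*(\g_2)_*f\big)=T\big((\g_1\circ\g_2)_*f\big)$ as operators.

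The identity (ii) I would first prove for smooth $\g_1,\g_2\in\diff(S^1)$, where it is a consequence of the representation property alone. By \eqref{eq:Tgammasmooth} and the definition of $T^\g$, applying $\Ad U(\g_2)$ and then $\Ad U(\g_1)$ to $T(f)$, and using that scalars are unaffected by $\Ad$, one gets $(T^{\g_1})^{\g_2}(f)=\Ad U(\g_1)\Ad U(\g_2)(T(f))$; since $\Ad$ annihilates the multiplier phase, $\Ad U(\g_1)\Ad U(\g_2)=\Ad U(\g_1\circ\g_2)$, so $(T^{\g_1})^{\g_2}(f)=T^{\g_1\circ\g_2}(f)$ as operators, and comparing scalar parts (the $T$-parts agreeing by (i)) yields (ii) in the smooth case. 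Equivalently, (ii) is just the classical cocycle/chain rule for the Schwarzian derivative.

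Finally I would remove the smoothness assumption by density. As smooth diffeomorphisms are dense in $\D^s(S^1)$, pick $\g_i^{(n)}\in\diff(S^1)$ with $\g_i^{(n)}\to\g_i$ in $\D^s(S^1)$. The right-hand side of (ii) is continuous in $(\g_1,\g_2)$ by continuity of composition in the topological group $\D^s(S^1)$ (Lemma \ref{lm:sobolevgroup}) together with the continuity of $\beta$ on $\D^s(S^1)\times L^2(S^1,\RR)$ (Lemma \ref{lm:gamma32sobolev}(c)); on the left, $(\g_2^{(n)})_*f\to(\g_2)_*f$ in $H^{s-1}(S^1)\subset L^2(S^1,\RR)$ by Lemma \ref{lm:gamma32sobolev}(b) (with exponent $s-1>\tfrac32$), whence $\beta(\g_1^{(n)},(\g_2^{(n)})_*f)\to\beta(\g_1,(\g_2)_*f)$ by joint continuity of $\beta$, while $\beta(\g_2^{(n)},f)\to\beta(\g_2,f)$. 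Passing to the limit extends (ii) to all $\g_1,\g_2\in\D^s(S^1)$, and with (i) this gives the claim. I expect this last step to be the main obstacle: for $3<s\le\tfrac72$ the Schwarzian $\{\g,z\}$ is only an $L^2$-function, so (ii) cannot be read off pointwise and must be obtained through the continuous extension of $\beta$ supplied by Lemma \ref{lm:gamma32sobolev}(c).
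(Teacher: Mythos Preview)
Your proof is correct, and it takes a genuinely different route from the paper's.

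The paper establishes the cocycle identity $\beta(\g_1\circ\g_2,f)=\beta(\g_1,(\g_2)_*f)+\beta(\g_2,f)$ by a direct computation: it invokes the chain rule for the Schwarzian derivative $\{\g_1\circ\g_2,z\}=\{\g_1,\g_2(z)\}(\tfrac{d}{dz}\g_2(z))^2+\{\g_2,z\}$ and performs the change of variables $e^{i\varphi}=\g_2(e^{i\theta})$ in the integral, working directly with $\g_1,\g_2\in\D^s(S^1)$ throughout. Your approach instead obtains (ii) in the smooth case for free from the projective representation property $\Ad U(\g_1)\Ad U(\g_2)=\Ad U(\g_1\circ\g_2)$ combined with \eqref{eq:Tgammasmooth}, and then passes to the non-smooth case by density and the joint continuity of $\beta$ and of the push-forward (Lemma~\ref{lm:gamma32sobolev}). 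The paper's argument is purely computational and independent of the existence of $U$ on $\diff(S^1)$, but it tacitly uses the Schwarzian chain rule for $\D^s$-diffeomorphisms that are in general only $C^2$; your approximation argument is more explicit on this point, at the cost of relying on the Goodman--Wallach integration (which the paper does take as input anyway). Both lead to the same identity, and your continuity bookkeeping with the exponent $s-1>2$ in Lemmas~\ref{lm:gamma32sobolev}(a)(b) is correct.
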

\begin{proof}
Using the properties of the Schwarzian derivative \cite{OT05}
\[
\left\{\gamma_1\gamma_2,z\right\}=\left\{\gamma_1,\gamma_2(z)\right\}\left(\frac{d}{dz}\gamma_2(z)\right)^2+\left\{\gamma_2,z\right\},
\]
where $y=\gamma_2(z)$, we infer that
\begin{align*}
\beta(\gamma_1\gamma_2,f)&=-\frac{c}{24\pi}\int_{0}^{2\pi}\left\{\gamma_1\gamma_2,z\right\}\bigg\rvert_{z=e^{i\theta}}f(e^{i\theta})e^{i2\theta}d\theta \\
&=-\frac{c}{24\pi}\int_0^{2\pi}\left\{\gamma_1,y\right\}\bigg\rvert_{y=\g_2(e^{i\theta})}\left(\frac{d}{dz}\gamma_2(z)\right)^2\bigg\rvert_{z=e^{i\theta}}f(e^{i\theta})e^{i2\theta}d\theta\\
&\qquad-\frac{c}{24\pi}\int_0^{2\pi}\left\{\gamma_2,z\right\}\bigg\rvert_{z=e^{i\theta}}f(e^{i\theta})e^{i2\theta}d\theta \\
&=-\frac{c}{24\pi}\int_0^{2\pi}\left\{\gamma_1,y\right\}\bigg\rvert_{y=e^{i\varphi}}\cdot(-i)\frac{d}{d\theta}\left(\g_2(e^{i\theta})\right)\bigg\rvert_{e^{i\theta}=\g_2^{-1}(e^{i\varphi})}f(\gamma_2^{-1}(e^{i\varphi}))e^{i\varphi}d\varphi
\\
&\qquad-\frac{c}{24\pi}\int_0^{2\pi}\left\{\gamma_2,z\right\}\bigg\rvert_{z=e^{i\theta}}f(e^{i\theta})e^{i2\theta}d\theta \\
&=-\frac{c}{24\pi}\int_0^{2\pi}\left\{\gamma_1,y\right\}\bigg\rvert_{y=e^{i\varphi}}\cdot(-i)e^{-i\varphi}\frac{d}{d\theta}\left(\g_2(e^{i\theta})\right)\bigg\rvert_{e^{i\theta}=\g_2^{-1}(e^{i\varphi})}f(\gamma_2^{-1}(e^{i\varphi}))e^{i2\varphi}d\varphi
\\
&\qquad-\frac{c}{24\pi}\int_0^{2\pi}\left\{\gamma_2,z\right\}\bigg\rvert_{z=e^{i\theta}}f(e^{i\theta})e^{i2\theta}d\theta \\
&=\beta(\gamma_1,\g_{2_{*}}(f))+\beta(\gamma_2,f),
\end{align*}
where we used the change of variables $e^{i\varphi} = \g_2(e^{i\theta})$,
hence $e^{i\theta}\frac{d\theta}{d\varphi}\frac{d\g_2}{dz}(e^{i\theta})|_{\g_2(e^{i\theta})=e^{i\varphi}}=e^{i\varphi}$, $\frac{d\g_2}{dz}(e^{i\theta})=-ie^{-i\theta}\frac{d}{d\theta}\g_2(e^{i\theta})$ and \eqref{eq:defgammaf}.

So $(T^{\g_1})^{\g_2}(f)=T((\gamma_1)_*((\gamma_2)_*f))-\beta(\gamma_1,\gamma_{2*}f)-\beta(\gamma_2,f) = T((\gamma_1 \gamma_2)_*f)-\beta(\gamma_1\gamma_2,f)=T^{\gamma_1 \gamma_2}(f)$.
\end{proof}

\begin{lemma}\label{lm:l0gammadomain}
Let $s>3$. $\dom(L_0)=\dom(L_0^{\gamma})$ for every $\gamma\in\D^s(S^1)$.
\end{lemma}
\begin{proof}
By Lemma \ref{lem:localapprox} we can take a sequence $\{\gamma_n\}$ in $\diff(S^1)$ convergent to $\gamma$ in the topology of $\D^s(S^1)$.
We observe that $1 = \lim_n \gamma_{n*}(\gamma^{-1}_*(1))$ in the topology of $\mathcal{S}_{\frac32}(S^1)$
by Lemma \ref{lm:gamma32sobolev}.
For $\xi\in \dom(L_0)$ we know from Proposition \ref{pr:nonsmooth}(\ref{pr:nonsmooth-convergence})
and \eqref{eq:smoothbound} that
\begin{align*}
\|L_0\xi\|&=\lim_{n\to\infty}\|\left(T^{\gamma_n}((\gamma^{-1}_* )(1))+\beta(\g_n,\g_*^{-1}(1))\right)\xi\| \\
&\leq \left(\lim_{n\to\infty} r\|\gamma^{-1}_{*} (1)\|_{\frac{3}{2}}\cdot \|(1+L_0^{\gamma_n})\xi\|+|\beta(\g_n,\g_*^{-1}(1))|\|\xi\|\right)\\
&= r\|\gamma^{-1}_{*} (1)\|_{\frac{3}{2}}\cdot \|(1+L_0^{\gamma})\xi\|+|\beta(\g,\g_*^{-1}(1))|\|\xi\|,
\end{align*}
where the last equality follows again from Lemma \ref{lm:gamma32sobolev}.
Recall that  we know that $\dom(L_0)\subset \dom(L^\gamma_0)$ from Proposition \ref{pr:nonsmooth}(\ref{pr:nonsmooth-bound})
and $L_0^{\gamma}$ is essentially self-adjoint on $\dom(L_0)$.
From the above inequality, we infer that
any sequence $\xi_n \in \dom(L_0)$ converging to $\xi \in \dom(L_0^\gamma)$ in the graph norm of $L_0^\gamma$
is also convergent in the graph norm of $L_0$, and therefore,
we have $\dom(L_0^{\gamma})=\dom(L_0)$.
\end{proof}

\begin{proposition}[energy bounds for $T^\g$]\label{pr:energybound}

Let $\g\in\D^s(S^1)$, $s>3$. Then
$$\Vert T^{\gamma}(f)\xi\Vert \leq r\Vert f\Vert_{\frac{3}{2}}\Vert(1+L_0^{\gamma})\xi\Vert$$
for all $\xi\in \dom(L_0)$.
\end{proposition}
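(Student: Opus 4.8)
The plan is to establish the bound first for smooth diffeomorphisms, where it is already recorded as \eqref{eq:smoothbound}, and then to pass to the limit along a smooth approximating sequence. First I would fix $\gamma\in\D^s(S^1)$, $f\in\vect(S^1)$ and $\xi\in\dom(L_0)$, noting that $\dom(L_0)=\dom(L_0^\gamma)$ by Lemma \ref{lm:l0gammadomain}, so the right-hand side is meaningful. Using Lemma \ref{lem:localapprox} I would choose a sequence $\{\gamma_n\}\subset\diff(S^1)$ with $\gamma_n\to\gamma$ in $\D^s(S^1)$. For each $n$, since $\gamma_n$ is smooth, \eqref{eq:smoothbound} gives
\[
 \|T^{\gamma_n}(f)\xi\|\le r\|f\|_{\frac{3}{2}}\|(1+L_0^{\gamma_n})\xi\|,
\]
and the whole matter reduces to showing that both sides converge to the corresponding quantities for $\gamma$.

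For the convergence I would exploit that $g\mapsto T(g)$ is linear on $\dom(L_0)$, so that
\[
 T^{\gamma_n}(f)\xi-T^\gamma(f)\xi=T\big((\gamma_n)_*f-\gamma_*f\big)\xi-\big(\beta(\gamma_n,f)-\beta(\gamma,f)\big)\xi.
\]
By Lemma \ref{lm:gamma32sobolev}(b) (applied with $s-1$ in place of $s$, which is licit as $s>3$) together with Lemma \ref{lm:gamma32sobolev}(a), the pushforwards satisfy $(\gamma_n)_*f\to\gamma_*f$ in $H^{s-1}(S^1)$ and hence in $\mathcal{S}_{\frac{3}{2}}(S^1)$; the energy bound Proposition \ref{pr:nonsmooth}(\ref{pr:nonsmooth-bound}) then yields $\|T((\gamma_n)_*f-\gamma_*f)\xi\|\le r\|(\gamma_n)_*f-\gamma_*f\|_{\frac{3}{2}}\|(1+L_0)\xi\|\to 0$, while $\beta(\gamma_n,f)\to\beta(\gamma,f)$ by Lemma \ref{lm:gamma32sobolev}(c). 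Thus $T^{\gamma_n}(f)\xi\to T^\gamma(f)\xi$ in norm. Running the identical argument with $f$ replaced by the constant function $1$ shows $L_0^{\gamma_n}\xi\to L_0^\gamma\xi$, and therefore $\|(1+L_0^{\gamma_n})\xi\|\to\|(1+L_0^\gamma)\xi\|$. Passing to the limit in the displayed smooth bound then gives the assertion.

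The hard part will be the fact that $\gamma_n\to\gamma$ a priori only delivers strong resolvent convergence of the operators involved, as in Proposition \ref{pr:nonsmooth}(\ref{pr:nonsmooth-convergence}), and such convergence by itself does not control either $\|T^{\gamma_n}(f)\xi\|$ or $\|(1+L_0^{\gamma_n})\xi\|$ on a fixed vector $\xi$. The device that circumvents this obstacle is precisely the combination used above: the linearity of $f\mapsto T(f)$ on $\dom(L_0)$ lets one isolate the difference as $T$ evaluated on $(\gamma_n)_*f-\gamma_*f$, and the \emph{uniform} energy bound of Proposition \ref{pr:nonsmooth}(\ref{pr:nonsmooth-bound}) converts the $\mathcal{S}_{\frac{3}{2}}$-convergence of the pushforwards directly into genuine strong convergence on the common core $\dom(L_0)$, upgrading mere resolvent convergence to what is actually needed. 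The only routine point to verify with care is the regularity bookkeeping in Lemma \ref{lm:gamma32sobolev} ensuring the pushforwards indeed land in $\mathcal{S}_{\frac{3}{2}}(S^1)$, which is exactly why the hypothesis $s>3$ (forcing $s-1>2$) enters.
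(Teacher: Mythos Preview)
Your proof is correct and follows essentially the same strategy as the paper: approximate $\gamma$ by smooth $\gamma_n$, invoke the already-established bound \eqref{eq:smoothbound}, and pass to the limit on both sides. Your justification of the convergence step is in fact more explicit than the paper's (which simply cites Proposition~\ref{pr:nonsmooth}(\ref{pr:nonsmooth-convergence}) and \eqref{eq:smoothbound}); your observation that strong resolvent convergence alone is insufficient, and that one should instead use linearity together with the uniform energy bound and the $\mathcal{S}_{\frac32}$-convergence of pushforwards, is exactly the right way to make the limit rigorous.
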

\begin{proof}
Let $\{\gamma_n\}$ a sequence of elements in $\diff(S^1)$ converging to $\gamma\in\D^s(S^1)$ as in Lemma \ref{lem:localapprox}.
By Proposition \ref{pr:nonsmooth}(\ref{pr:nonsmooth-convergence}) and \eqref{eq:smoothbound},
\begin{align*}
\Vert T^{\gamma}(f)\xi\Vert &= \lim_{n\to\infty}\Vert T^{\gamma_n}(f)\xi\Vert\leq \lim_{n\to\infty}r\Vert f\Vert_{\frac{3}{2}}\Vert(1+L_0^{\gamma_n})\xi\Vert=\\
&= r\Vert f\Vert_{\frac{3}{2}}\Vert(1+L_0^{\gamma})\xi\Vert,
\end{align*}
which is the desired inequality.
\end{proof}

\begin{theorem}
Let $\g\in\D^s(S^1)$, $s>3$. $T^{\gamma}$ yields an irreducible unitary positive energy representation of $\vir$ with central charge $c$ and lowest weight $h$ on $\H(c,h)$.
\end{theorem}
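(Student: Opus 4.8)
The plan is to set $L_n^\gamma := T^\gamma(e_n)$, where $e_n(e^{i\theta})=e^{in\theta}$ and $T^\gamma$ is extended complex-linearly, and to check the defining properties one at a time, transporting the smooth case to general $\gamma\in\D^s(S^1)$ by choosing $\gamma_n\to\gamma$ in $\diff(S^1)$ as in Lemma \ref{lem:localapprox}; for smooth $\gamma_n$ everything is immediate from \eqref{eq:Tgammasmooth}, since then $T^{\gamma_n}=\Ad U(\gamma_n)\,T$ is a unitary conjugate of $T$. Unitarity is the easiest point: for real $f$ the pushforward $\gamma_*f$ is real and lies in $\mathcal{S}_{\frac32}(S^1)$ and $\beta(\gamma,f)$ is real, so $T^\gamma(f)$ is essentially self-adjoint by Proposition \ref{pr:nonsmooth}(\ref{pr:nonsmooth-symmetry}); splitting $e_n$ into real and imaginary parts then gives $\langle v,L_n^\gamma w\rangle=\langle L_{-n}^\gamma v,w\rangle$ on $\dom(L_0)$. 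The central charge will emerge as $c$ from the central term of the commutation relations below.

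Before touching commutators I would fix the positive-energy structure of $L_0^\gamma=T^\gamma(1)$. Since $\dom(L_0)=\dom(L_0^\gamma)$ with equivalent graph norms (Lemma \ref{lm:l0gammadomain} with Propositions \ref{pr:nonsmooth}(\ref{pr:nonsmooth-bound}) and \ref{pr:energybound}) and $(1+L_0)^{-1}$ is compact, the factorization $(1+L_0^\gamma)^{-1}=(1+L_0)^{-1}\cdot(1+L_0)(1+L_0^\gamma)^{-1}$ shows $(1+L_0^\gamma)^{-1}$ is compact, so $L_0^\gamma$ has purely discrete spectrum of finite multiplicity. Moreover $L_0^{\gamma_n}=U(\gamma_n)L_0U(\gamma_n)^*\to L_0^\gamma$ in the strong resolvent sense (Proposition \ref{pr:nonsmooth}(\ref{pr:nonsmooth-convergence}) and Lemma \ref{lm:gamma32sobolev}), and every $L_0^{\gamma_n}$ has the spectrum of $L_0$, contained in $\{h,h+1,\dots\}$ and bounded below by $h$. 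Strong resolvent convergence preserves the spectral gaps and the lower bound, so $L_0^\gamma$ is diagonalizable with eigenvalues in $\{h,h+1,\dots\}$ and lowest eigenvalue $h'\ge h$.

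The crux is the Virasoro relations, which I would first obtain weakly. For smooth $\gamma_n$, conjugating Proposition \ref{pr:covariance} by $U(\gamma_n)$ yields, on $\mathcal{H}^{\fin}(c,h)$, the identity $i[T^{\gamma_n}(g),T^{\gamma_n}(f)]=T^{\gamma_n}(g'f-f'g)+c\,\omega(g,f)$ for smooth $f,g$. Pairing with $v,w\in\dom(L_0)$ and moving one factor across via the symmetry of Proposition \ref{pr:nonsmooth}(\ref{pr:nonsmooth-star}), the left side becomes $i\langle T^{\gamma_n}(\bar g)v,T^{\gamma_n}(f)w\rangle-i\langle T^{\gamma_n}(\bar f)v,T^{\gamma_n}(g)w\rangle$. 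The main obstacle is precisely that the unbounded \emph{product} $T^{\gamma_n}(g)T^{\gamma_n}(f)$ does not converge by the available first-order bounds: controlling it strongly would require second-order energy bounds involving $(\gamma_{n*}f)'$, whose $\mathcal{S}_{\frac32}$-norm is not controlled. The weak pairing is what circumvents this: $f,g,\bar f,\bar g,g'f-f'g$ are fixed smooth functions, so $\gamma_{n*}(\cdot)\to\gamma_*(\cdot)$ in $\mathcal{S}_{\frac32}(S^1)$ (Lemma \ref{lm:gamma32sobolev}), whence by Proposition \ref{pr:nonsmooth}(\ref{pr:nonsmooth-bound}) each vector $T^{\gamma_n}(\cdot)v$ and $T^{\gamma_n}(\cdot)w$ converges in norm and the inner products converge. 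This produces the weak commutation relation for $T^\gamma$ on $\dom(L_0)$, and with $f,g$ specialized to the $e_n$ it reproduces the Virasoro relations with central element acting as $c$.

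Finally I would upgrade weak to strong and conclude. Taking $g=1$ gives $[L_0^\gamma,L_n^\gamma]=-nL_n^\gamma$ weakly; testing against eigenvectors of $L_0^\gamma$ and using diagonalizability forces $L_n^\gamma$ to send the $\lambda$-eigenspace into the $(\lambda-n)$-eigenspace, so the finite-energy space $\mathcal{H}^{\fin,\gamma}$ of $L_0^\gamma$ is invariant under all $L_n^\gamma$, and on this dense invariant domain the weak identities become genuine operator identities. Hence $T^\gamma$ defines a unitary positive energy representation of $\vir$ with central charge $c$ and lowest weight $h'\ge h$. For irreducibility I would invoke the composition law (Proposition \ref{lm:composition}): any bounded operator commuting with the whole family $\{T^\gamma(f)\}$ commutes with $T^\gamma((\gamma^{-1})_*f)=(T^\gamma)^{\gamma^{-1}}(f)=T(f)$ up to a scalar, hence lies in $\pi(\vir)'=\CC$. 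To see $h'=h$, note that this representation is irreducible of central charge $c$ and lowest weight $h'$, so by \cite{KR87,FH05} it integrates to a representation with stress–energy tensor $T^\gamma$; applying the lower-bound argument of the second paragraph to it with the diffeomorphism $\gamma^{-1}$ gives $(T^\gamma)^{\gamma^{-1}}(1)=T(1)=L_0\ge h'$, that is $h\ge h'$, so $h'=h$.
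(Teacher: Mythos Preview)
Your proof is correct and takes a genuinely different route from the paper's. The paper first establishes the \emph{exponentiated} covariance $e^{iT^\gamma(g)}T^\gamma(f)e^{-iT^\gamma(g)}=T^\gamma(\Exp(g)_*f)-\beta(\Exp(g),f)$ on $\dom(L_0)$ by approximating with smooth $\gamma_n$ and passing to the limit through bounded functional calculus, and then differentiates this relation on $C^\infty(L_0^\gamma)$ via rather delicate estimates to obtain both the infinitesimal Virasoro relations and the invariance of $C^\infty(L_0^\gamma)$. You instead pass the commutation relations to the limit directly in their \emph{weak} (sesquilinear) form on $\dom(L_0)\times\dom(L_0)$, then exploit the discrete spectrum of $L_0^\gamma$ (obtained from strong resolvent convergence, with the compact-resolvent observation being pleasant but actually redundant once $\sp L_0^\gamma\subset h+\NN$) to see that $L_n^\gamma$ shifts eigenspaces, so that the finite-energy space $\H^{\fin,\gamma}$ is invariant and the weak identities upgrade to strong ones there. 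Your route is more elementary and sidesteps the differentiability computations entirely; the paper's route, on the other hand, yields the exponentiated relation \eqref{eq:commutationexp} as a byproduct (reused in Corollary~\ref{cr:continuityB(H)}) and is deliberately phrased so as to work for any positive energy representation with diagonalizable $L_0$, which is what makes the iteration for $h'=h$ self-contained. Your treatment of irreducibility and of $h'=h$ via the composition law matches the paper's; the appeal to integrability in \cite{FH05} for the reverse inequality is legitimate but, as the paper observes, not strictly needed once one notes that the whole construction can be iterated starting from $T^\gamma$. One small point worth making explicit in your irreducibility step: since $(\gamma^{-1})_*f$ is only $H^{s-1}$, commutation of a bounded $A$ with $T^\gamma((\gamma^{-1})_*f)$ follows by approximating $(\gamma^{-1})_*f$ by smooth $g_n$ in $H^{s-1}$ and using strong resolvent convergence of $T^\gamma(g_n)$.
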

\begin{proof}
We are going to prove the Virasoro relations on $C^\infty(L_0^\gamma)$.
For this purpose, we have to take under control the action of various exponentiated operators.

\paragraph{Computations on $\dom(L_0)$.}
Let $f$ and $g$ be real smooth functions. We start by noting that $e^{iT^\gamma(g)} \dom(L_0)\subset \dom(L_0)$.
Indeed, using \cite[Proposition 3.1]{FH05} we have, for $\xi\in \dom(L_0)$ and $\gamma_n \in \diff(S^1)$ as in Lemma \ref{lem:localapprox},
\[
L_0e^{iT^{\gamma_n}(g)}\xi =
e^{iT^{\gamma_n}(g)} ( T((\gamma_n \Exp(-g) \gamma_n^{-1})_*(1)) -
\beta(\gamma_n \Exp(-g) \gamma_n^{-1},1) )\xi,
\]
and the right-hand side converges as $n\rightarrow \infty$ by Proposition \ref{pr:nonsmooth}(\ref{pr:nonsmooth-convergence}).
Therefore, since both $e^{iT^{\gamma_n}(g)}\xi$ and $L_0e^{iT^{\gamma_n}(g)}\xi$ are convergent,
it follows that $e^{iT^{\gamma}(g)}\xi \in \dom(L_0)$ and
\[
L_0e^{iT^{\gamma}(g)}\xi=e^{iT^{\gamma}(g)}( T((\gamma \Exp(-g) \gamma^{-1})_*(1))
- \beta(\gamma \Exp(-g) \gamma^{-1},1) )\xi.
\]

For $\gamma_n \in \diff(S^1)$, by Proposition \ref{pr:covariance}
we have the operator equality
\[
e^{iT^{\gamma_n}(g)}T^{\gamma_n}(f)e^{-iT^{\gamma_n}(g)}=T^{\gamma_n}(\Exp(g)_* (f)) - \frac{c}{24\pi}\int_{S^1}\{\Exp(g),z\}izf(z)dz.
\]

Now, for any positive integer $k$, we consider the function $h_k: \mathbb{R} \to \mathbb{R}$ defined by 
$$h_k(s)= s e^{-\frac{s^2}{k}}.$$ 

Using functional calculus we apply the function $h_k$ to the self-adjoint operators appearing in the two sides of the previous operator equality and we obtain 
\begin{equation}\label{eq:nk}e^{iT^{\gamma_n}(g)}h_k\left(T^{\gamma_n}(f) \right) e^{-iT^{\gamma_n}(g)}=h_k\left(T^{\gamma_n}(\Exp(g)_* (f)) - \frac{c}{24\pi}\int_{S^1}\{\Exp(g),z\}izf(z)dz\right).
\end{equation}

The left-hand side of \eqref{eq:nk} converges strongly to $e^{iT^{\gamma}(g)}h_k(T^{\gamma}(f))e^{-iT^{\gamma}(g)}$
as $n\rightarrow\infty$, because we have convergence of $T^{\gamma_n}(f)$ to $T^{\gamma}(f)$ and
$T^{\gamma_n}(g)$ to $T^{\gamma}(g)$ in the strong resolvent sense and hence, $e^{iT^{\gamma_n}(g)}, h_k(T^{\gamma_n}(f))$ converge
to $e^{iT^{\gamma}(g)}, h_k(T^{\gamma}(f))$, respectively, by \cite[Theorem VIII.20(b)]{RSI}.  Similarly, from the convergence 
of  $T^{\gamma_n}(\Exp(g)_*(f))$ to $T^{\gamma}(\Exp(g)_*(f))$  in the strong resolvent sense it follows that the right-hand side of 
\eqref{eq:nk} converges strongly  to
$$h_k\left(T^{\gamma}(\Exp(g)_* (f)) - \frac{c}{24\pi}\int_{S^1}\{\Exp(g),z\}izf(z)dz\right).$$ Thus 
$$e^{iT^{\gamma}(g)}h_k(T^{\gamma}(f))e^{-iT^{\gamma}(g)}= h_k\left(T^{\gamma}(\Exp(g)_* (f)) - \frac{c}{24\pi}\int_{S^1}\{\Exp(g),z\}izf(z)dz\right)$$ 
so that, if $\xi$ is in  $\dom(L_0)$
$$e^{iT^{\gamma}(g)}h_k(T^{\gamma}(f))e^{-iT^{\gamma}(g)}\xi= h_k\left(T^{\gamma}(\Exp(g)_* (f)) - \frac{c}{24\pi}\int_{S^1}\{\Exp(g),z\}izf(z)dz\right)\xi \, .$$

By taking the limit for $k\rightarrow\infty$, we get for every $\xi\in \dom(L_0)$
\begin{align}\label{eq:commutationexp}
e^{iT^{\gamma}(g)}T^{\gamma}(f)e^{-iT^{\gamma}(g)}\xi=T^{\gamma}(\Exp(g)_* (f))\xi - \left(\frac{c}{24\pi}\int_{S^1}\{\Exp(g),z\}izf(z)dz\right) \xi.
\end{align}

Recall that $\dom(L_0)=\dom(L^{\gamma}_0)$. We get in particular
\begin{align}\label{eq:gammarotation}
e^{itL_0^{\gamma}}T^{\gamma}(f)e^{-itL_0^{\gamma}}\xi = T^{\gamma}(f_t)\xi,
\end{align}
where $f_t(e^{i\theta}) = f(e^{i(\theta -t)})$.

\paragraph{Computations on $C^\infty(L_0^\gamma)$.}
The right-hand side of \eqref{eq:gammarotation}
is differentiable with respect to $t$ when 
$\xi\in \dom(L_0)$ since for the right hand side we get
\[
\lim_{t\rightarrow 0}\frac{1}{t}(T^{\gamma}(f_t)-T^{\gamma}(f))\xi
= \lim_{t\rightarrow 0} T^{\gamma}(\textstyle{\frac1 t}(f_t - f))\xi
= T^{\gamma}(-f^{\prime})\xi = - T^{\gamma}(f^{\prime})\xi,
\]
by the continuity of $T^\gamma$ in the topology of $\mathcal{S}_{\frac32}(S^1)$ (Proposition \ref{pr:energybound}).
Let us specialize it to $\xi\in C^\infty (L_0^\gamma) := \bigcap_n \dom((L^\g_0)^n)$.
For the left-hand side of \eqref{eq:gammarotation}, we have
\begin{align}\label{eq:rotationdiff}
&\left.\frac{d}{dt}\right\vert_{t=0}e^{itL^{\gamma}_0}T^{\gamma}(f)e^{-itL_0^{\gamma}}\xi \nonumber \\
&=\lim_{t\rightarrow\infty}\left(\frac{1}{t}\left(e^{itL^{\gamma}_0}T^{\gamma}(f)e^{-itL_0^{\gamma}}-e^{itL^{\gamma}_0}T^{\gamma}(f)\right)\xi 
+\frac{1}{t}\left(e^{itL^{\gamma}_{0}}T^{\gamma}(f)-T^{\gamma}(f)\right)\xi\right).
\end{align}
The first term converges to $-iT^{\gamma}(f)L_0\xi$. Indeed, by Proposition \ref{pr:energybound},
\begin{align*}
&\left\|\frac{1}{t}\left(e^{itL^{\gamma}_0}T^{\gamma}(f)e^{-itL_0^{\gamma}}-e^{itL^{\gamma}_0}T^{\gamma}(f)\right)\xi+ie^{itL^{\gamma}_0}T^{\gamma}(f)L_0^\gamma\xi\right\| \\
&=\left\|\frac{1}{t}\left(T^{\gamma}(f)e^{-itL_0^{\gamma}}-T^{\gamma}(f)\right)\xi+iT^{\gamma}(f)L_0^\gamma \xi\right \| \\
&\leq r\|f\|_{\frac{3}{2}}\left\|(1+L_0^\gamma)\left(\frac{e^{-itL_0^{\gamma}}-1}{t}+iL^{\gamma}_0\right)\xi\right\| \\
&= r\|f\|_{\frac{3}{2}}\left\|\left(\frac{e^{-itL_0^{\gamma}}-1}{t}+iL^{\gamma}_0\right)(1+L^{\gamma}_0)\xi\right\|.
\end{align*}
Since $\xi\in C^{\infty}(L_0^{\gamma})$, by Stone's theorem \cite[Theorem VIII.7(c)]{RSI}
the above converges to 0 as $t\rightarrow 0$. Thus the limit exists also for the second term
of \eqref{eq:rotationdiff}, and by applying Stone's theorem \cite[Theorem VIII.7(d)]{RSI},
we get $T^{{\gamma}}(f)\xi\in \dom(L^{\gamma}_0)$, and the second term converges to $iL_0^\gamma T^{{\gamma}}(f)\xi$.
or in other words, $T^{{\gamma}}(f)C^\infty(L_0) \subset \dom(L^{\gamma}_0)$
(actually, we proved $T^{{\gamma}}(f)\dom((L_0^\gamma)^2) \subset \dom(L^{\gamma}_0)$).
Thus we have established the following commutation relation on $C^{\infty}(L^{\gamma}_0)$:
\begin{align}\label{eq:commutation}
[L^{\gamma}_0,T^{\gamma}(f)]\xi = iT^{\gamma}(f^{\prime})\xi.
\end{align}

It follows that $C^\infty (L^{\gamma}_0)$ is an invariant domain for every $T^{\gamma}(f)$ with $f\in C^\infty(S^1,\mathbb{R})$. Indeed, for $T^{\gamma}(f)\xi$, with $\xi\in C^{\infty}(L_0^{\gamma})$ and $f\in C^{\infty} (S^1,\mathbb{R})$,
\eqref{eq:commutation} is equivalent to
\begin{align}\label{eq:commutation2}
L^{\gamma}_0 T^{\gamma}(f)\xi= [L^{\gamma}_0,T^{\gamma}(f)]\xi + T^{\gamma}(f)L_0^{\gamma}\xi = iT^{\gamma}(f^{\prime})\xi+ T^{\gamma}(f)L_0^{\gamma}\xi.
\end{align}
We now show that $T^\gamma(f)\xi\in \dom((L_0^\gamma)^{k})$ for every positive integer $k$, using induction on $k$. Assume that $T^{\gamma}(f)\xi\in \dom((L_0^\gamma)^k)$ and all $f\in C^{\infty}(S^1,\mathbb{R})$.
It then follows from \eqref{eq:commutation2} that $L_0^{\gamma}T^{\gamma}(f)\xi\in \dom((L_0^\gamma)^k)$, i.e. $T^\gamma(f)\xi\in \dom((L_0^\gamma)^{k+1})$.
We thus get the desired claim $T^{\gamma}(f)C^{\infty}(L^{\gamma}_0)\subset  C^{\infty}(L^{\gamma}_0)$.

\paragraph{The Virasoro relations.}
Finally we show that the stress-energy tensor $T^{\gamma}$ indeed yields a representation of $\vect(S^1)$. 
For $\xi \in C^\infty(L_0^\gamma)$,
\begin{align}\label{eq:commutationdiff}
&\left.\frac{d}{dt}\right\vert_{t=0}e^{itT^{\gamma}(g)}T^{\gamma}(f)e^{-it T^{\gamma}(g)}\xi \nonumber \\
&=\lim_{t\rightarrow 0}\left(\frac{1}{t}\left(e^{itT^{\gamma}(g)}T^{\gamma}(f)e^{-it T^{\gamma}(g)}-e^{itT^{\gamma}(g)}T^{\gamma}(f)\right)
+\frac{1}{t}\left(e^{itT^{\gamma}(g)}T^{\gamma}(f)-T^{\gamma}(f)\right)\right)\xi.
\end{align}
As for the left-hand side, from \eqref{eq:commutationexp},
we obtain $(T^\gamma(g'f-gf') + c\omega(g,f))\xi$
by \eqref{eq:gelfandderivative}.

Let us see the right-hand side of \eqref{eq:commutationdiff} term by term.
As for the first term, we have
\begin{align}\label{eq:comm1}
&\left\Vert\frac{1}{t}\left(e^{itT^{\gamma}(g)}T^{\gamma}(f)e^{-it T^{\gamma}(g)} - e^{itT^{\gamma}(g)}T^{\gamma}(f)\right)\xi + e^{itT^{\gamma}(g)}\cdot iT^{\gamma}(f)T^{\gamma}(g)\xi\right\Vert \nonumber \\
&=\left\Vert\frac{1}{t}\left(T^{\gamma}(f)e^{-it T^{\gamma}(g)}-T^{\gamma}(f)\right)\xi + iT^{\gamma}(f)T^{\gamma}(g)\xi\right\Vert \nonumber \\ 
&\le r\Vert f\Vert_{\frac{3}{2}}\left\Vert(1+L^{\gamma}_0)\frac{1}{t}\left(e^{-itT^{\gamma}(g)}-1\right)\xi + (1+L^{\gamma}_0)\cdot iT^{\gamma}(g)\xi\right\Vert \nonumber \\
&\le r\Vert f\Vert_{\frac{3}{2}}\left(\left\Vert \left(\frac{1}{t}\left(e^{-itT^{\gamma}(g)}-1\right)+ iT^{\gamma}(g)\right)\xi\right\Vert +\left\Vert\left( \frac{1}{t}L_0^\gamma\left(e^{-itT^{\gamma}(g)}-1\right) + iL_0^\gamma T^{\gamma}(g)\right)\xi\right\Vert\right).
\end{align}
The first term of \eqref{eq:comm1} goes to $0$ by Stone's theorem \cite[Theorem VIII.7(c)]{RSI}.
The second term can be treated by \eqref{eq:commutationexp} and \eqref{eq:commutation} as follows:
\begin{align*}
&\left\Vert \frac{1}{t}L^{\gamma}_0(e^{-itT^{\gamma}(g)}-1)\xi + iL^{\gamma}_0 T^{\gamma}(g)\xi\right\Vert \\
&=\left\Vert\frac{1}{t}\left(e^{-itT^{\gamma}(g)}(T^{\gamma}(\Exp(tg)_* (1))-\beta(\Exp(tg),1))-L^{\gamma}_0\right)\xi + i(iT^{\gamma}(g^{\prime})+T^{\gamma}(g)L^{\gamma}_0)\xi\right\Vert\\
&\leq \left\Vert\frac{1}{t}(e^{-itT^{\gamma}(g)}T^{\gamma}(\Exp(tg)_* (1))-e^{-itT^{\gamma}(g)}L_0^{\gamma})\xi-T^{\gamma}(g^{\prime})\xi\right\Vert \\
&\qquad\qquad+\left\Vert\frac{1}{t}(e^{-itT^{\gamma}(g)}L^{\gamma}_0-L^{\gamma}_0)\xi + iT^{\gamma}(g)L^{\gamma}_0\xi\right\Vert+\bigg\vert\frac{1}{t}\beta(\Exp(tg),1)\bigg\vert\Vert\xi\Vert.
\end{align*}
Each term can be seen to converge to $0$:
the first term is done by noting that $L_0^\gamma = T^\gamma(1)$,
continuity of $T^\g$ (Proposition \ref{pr:energybound}), $[g,1] = g'$ and unitarity of $e^{-itT^{\gamma}(g)}$.
The second term vanishes by using Stone's theorem.
The last term also converges to zero by \eqref{eq:gelfandderivative} and using the fact that $\omega(g,1)=0$.
To summarize, the first term of the right-hand side of \eqref{eq:commutationdiff}
tends to $-iT^\g(f)T^\g(g)$.

The second term of \eqref{eq:commutationdiff} is equal to $iT^{\gamma}(g)T^{\gamma}(f)$. Indeed,
since $C^{\infty}(L^{\gamma}_0)$ is invariant under the action of $T^{\gamma}(f)$, this follows by Stone's theorem.

Altogether, we obtained the equality $i[T^\gamma(g),T^\gamma(f)] = T^\gamma(g'f-gf') + c\omega(g,f)$ on $C^\infty(L_0^\gamma)$,
which is the Virasoro commutation relation.

Note that until here we have only used that $T$ is a positive energy representation
of the Virasoro algebra with central charge $c$ with diagonalizable $L_0$, but not irreducibility.
Therefore, one can iterate our construction for another element in $\D^s(S^1)$.
In particular, by taking $\gamma^{-1}$, we obtain by Proposition \ref{lm:composition}
\begin{equation}\label{eq:gammagamma-1}
 (T^\gamma)^{\gamma^{-1}}(f)= T(f).
\end{equation}

We claim that the new representation $T^\gamma$ is irreducible and has the same lowest weight $h$.
Indeed, by \eqref{eq:gammagamma-1},
one can approximate $T(f)$ by $T^\gamma(\gamma^{-1}_{n*}f)+\beta(\g,(\g_n^{-1})_*(f))$ in the strong resolvent sense,
where $\{\gamma_n\} \subset \diff(S^1)$ and $\gamma_n \to \gamma$ in the topology of $\D^s(S^1)$.
As $\{e^{iT(f)}: f\in\vect(S^1)\}$ generates $\B(\H(c,h))$,
so does $\{e^{iT^\gamma(f)}: f\in\vect(S^1)\}$, and this shows that
$T^\gamma$ is an irreducible representation of the Virasoro algebra.
Furthermore, 
the new conformal Hamiltonian $L^{\gamma}_0=T^{\gamma}(1)$ has spectrum
which is a subset of the spectrum of the old conformal Hamiltonian $L_0$
since it is obtained as a limit in the strong resolvent sense of $\{\Ad U(\gamma_n)(L_0)\}$ with the same spectrum \cite[Theorem VIII.24(a)]{RSI}. Again by iteration, we have
\[
\sp L_0 = \sp(T^\gamma)^{\gamma^{-1}}(1)\subset
\sp L^\gamma_0 = \sp T^\gamma(1) \subset \sp L_0,
\]
therefore, all these sets must coincide. In particular, $h$ is the lowest eigenvalue of $L^\gamma_0$.
\end{proof}

As $T$ and $T^\gamma$ are equivalent as irreducible representations of $\vect(S^1)$
and thus of the Virasoro algebra,
there is a unitary intertwiner $U(\gamma)$, defined up to a scalar such that $U(\gamma)T(f)=T^{\gamma}(f)U(\gamma)$.

\begin{corollary}\label{cr:projective}
The map $\gamma\mapsto U(\gamma)$ where $\gamma\in\D^s(S^1)$, $s>3$,
is a unitary projective representation of $\D^s(S^1)$, i.e.\!
$U(\gamma_1 \gamma_2)=U(\gamma_1)U(\gamma_2)$ up to a phase factor.
\end{corollary}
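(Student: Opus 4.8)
The bulk of the work has already been done: the preceding theorem establishes that $T^{\gamma}$ is an irreducible unitary positive energy representation of $\vir$ equivalent to $T$, and produces the intertwiner $U(\gamma)$, unique up to a phase, with $U(\gamma)T(f)=T^{\gamma}(f)U(\gamma)$ for all $f\in\vect(S^1)$. The plan is therefore to show that the two unitaries $U(\gamma_1)U(\gamma_2)$ and $U(\gamma_1\circ\gamma_2)$ both intertwine $T$ with the \emph{same} representation $T^{\gamma_1\circ\gamma_2}$, and then to conclude from irreducibility that they differ only by a phase.

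First I would compute how the product $U(\gamma_1)U(\gamma_2)$ transports $T(f)$. Applying the intertwining relation for $U(\gamma_2)$ and then the defining formula $T^{\gamma_2}(f)=T((\gamma_2)_*f)-\beta(\gamma_2,f)$, one is led to evaluate $U(\gamma_1)T^{\gamma_2}(f)$. Since $\beta(\gamma_2,f)$ is a scalar, it passes through $U(\gamma_1)$ untouched, and the intertwining relation for $U(\gamma_1)$ converts $T((\gamma_2)_*f)$ into $T^{\gamma_1}((\gamma_2)_*f)$, so that
\[
 U(\gamma_1)T^{\gamma_2}(f)=\left(T^{\gamma_1}((\gamma_2)_*f)-\beta(\gamma_2,f)\right)U(\gamma_1)=(T^{\gamma_1})^{\gamma_2}(f)\,U(\gamma_1),
\]
where the last equality is merely the definition of $(T^{\gamma_1})^{\gamma_2}$. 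By Proposition \ref{lm:composition} the right-hand side equals $T^{\gamma_1\circ\gamma_2}(f)\,U(\gamma_1)$, whence
\[
 U(\gamma_1)U(\gamma_2)\,T(f)=T^{\gamma_1\circ\gamma_2}(f)\,U(\gamma_1)U(\gamma_2).
\]
Thus $U(\gamma_1)U(\gamma_2)$ intertwines $T$ with $T^{\gamma_1\circ\gamma_2}$, exactly as $U(\gamma_1\circ\gamma_2)$ does by construction.

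To finish I would invoke irreducibility. The unitary $W:=U(\gamma_1\circ\gamma_2)^*U(\gamma_1)U(\gamma_2)$ then satisfies $W\,T(f)=T(f)\,W$ for every $f$, hence commutes with each $e^{iT(f)}$. Since $\{e^{iT(f)}:f\in\vect(S^1)\}$ generates $\B(\H(c,h))$ (as recorded in the proof of the preceding theorem), $W$ lies in the center of $\B(\H(c,h))$ and is therefore a scalar; being unitary, it is a phase. This gives $U(\gamma_1)U(\gamma_2)=U(\gamma_1\circ\gamma_2)$ up to a phase, which is precisely the projective representation property.

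The only point requiring genuine care is that the displayed identities are, a priori, relations between unbounded operators. I would therefore carry out the manipulations on a common invariant core — for instance $\H^{\fin}(c,h)$, or the domain $\dom(L_0)=\dom(L_0^{\gamma})$ furnished by Lemma \ref{lm:l0gammadomain} — using that each $T^{\gamma}(f)$ is essentially self-adjoint there (Proposition \ref{pr:nonsmooth}) and that $U(\gamma)$ maps the relevant domains onto one another, and then pass to the bounded, exponentiated form $U(\gamma)e^{iT(f)}U(\gamma)^*=e^{iT^{\gamma}(f)}$ via the spectral theorem. Once everything is phrased through the bounded unitaries $e^{iT(f)}$, the commutation argument and the appeal to irreducibility are immediate, so I expect no serious obstacle beyond this bookkeeping.
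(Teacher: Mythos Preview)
Your proposal is correct and follows essentially the same route as the paper: compute $U(\gamma_1)U(\gamma_2)T(f)$ via the two intertwining relations, recognize the result as $T^{\gamma_1\circ\gamma_2}(f)U(\gamma_1)U(\gamma_2)$ by Proposition~\ref{lm:composition}, and conclude by irreducibility. The paper's proof is slightly terser (it does not spell out the $W$-commutant argument or the domain bookkeeping you mention in your final paragraph), but the substance is identical.
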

\begin{proof}
We know that for every $\gamma\in\D^s(S^1)$
\begin{align*}
U(\gamma)T(f)&=T^{\gamma}(f)U(\gamma)
\end{align*}
holds for every $f\in\vect(S^1)$. So
\begin{align*}
U(\gamma_1)U(\gamma_2)T(f)&=U(\gamma_1)T^{\gamma_2}(f)U(\gamma_2)=U(\gamma_1)(T(\gamma_{2*}f)-\beta(\gamma_2,f))U(\gamma_2)=\\
&=(T^{\gamma_1}(\gamma_{2*}f)-\beta(\gamma_2,f))U(\gamma_1)U(\gamma_2)=\\
&=(T((\gamma_1 \gamma_2)_*f)-\beta(\gamma_1,\gamma_{2*}f)-\beta(\gamma_2,f))U(\gamma_1)U(\gamma_2).
\end{align*}
Consequently by the computations of  Proposition \ref{lm:composition}
\[
U(\gamma_1)U(\gamma_2)T(f)=T^{\gamma_1 \gamma_2}(f)U(\gamma_1)U(\gamma_2),
\]
therefore $U(\gamma_1 \gamma_2)=U(\gamma_1)U(\gamma_2)$ up to a phase
because we are dealing with irreducible representations of the Virasoro algebra.
\end{proof}

\begin{corollary}\label{cr:continuityB(H)}
Let $U=U_{(c,h)}$ be the irreducible unitary projective representation of $\diff(S^1)$ with central charge $c$ and lowest weight $h$.
Then $U$ extends to a strongly continuous irreducible unitary projective representation of $\D^s(S^1)$, $s>3$.
\end{corollary}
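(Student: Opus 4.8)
The plan is to verify that the projective representation $\gamma \mapsto U(\gamma)$ produced in Corollary \ref{cr:projective} genuinely extends $U$ and is strongly continuous. The extension property is immediate: for $\gamma \in \diff(S^1)$, equation \eqref{eq:Tgammasmooth} gives $T^\gamma(f) = \Ad U(\gamma)(T(f))$, so the original $U(\gamma)$ is an intertwiner between $T$ and $T^\gamma$; by irreducibility such an intertwiner is unique up to a phase, hence the new $U(\gamma)$ coincides with the old one in $\U(\H)/\mathbb{T}$. Irreducibility of the extension is inherited from its restriction to $\diff(S^1)$, whose image already generates $\B(\H)$. Since $\D^s(S^1)$ is metrizable, strong continuity reduces to sequential continuity, and by the Bargmann-type criterion recalled in Section \ref{projective} it suffices to show, for any sequence $\gamma_n \to \gamma$ in $\D^s(S^1)$, that $\Ad U(\gamma_n)(x) \to \Ad U(\gamma)(x)$ in the strong operator topology for every $x \in \B(\H)$.

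First I would establish this convergence on the generators $e^{iT(f)}$, $f \in \vect(S^1)$. By Lemma \ref{lm:gamma32sobolev}(a)--(b) one has $\gamma_{n*}f \to \gamma_* f$ in $\mathcal{S}_{\frac32}(S^1)$ and by Lemma \ref{lm:gamma32sobolev}(c) $\beta(\gamma_n, f) \to \beta(\gamma, f)$, so $T^{\gamma_n}(f) \to T^\gamma(f)$ in the strong resolvent sense by Proposition \ref{pr:nonsmooth}(\ref{pr:nonsmooth-convergence}); hence $e^{\pm iT^{\gamma_n}(f)} \to e^{\pm iT^\gamma(f)}$ strongly. Because $\Ad U(\gamma)(e^{iT(f)}) = e^{iT^\gamma(f)}$, this reads $\Ad U(\gamma_n)(e^{iT(f)}) \to \Ad U(\gamma)(e^{iT(f)})$, and together with the adjoint it is $*$-strong convergence. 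The collection of $x$ for which $\Ad U(\gamma_n)(x) \to \Ad U(\gamma)(x)$ holds $*$-strongly is a $*$-subalgebra of $\B(\H)$ (closure under products uses that the $\Ad U(\gamma_n)$ are isometric, hence uniformly bounded), so it contains the $*$-algebra $\A_0$ generated by $\{e^{iT(f)}\}$.

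The main obstacle is to upgrade from $\A_0$ to all of $\B(\H)$. Since the representation is irreducible, $\A_0$ is $*$-strongly dense in $\B(\H)$, so by Kaplansky's density theorem it is enough to show that the convergence set is $*$-strongly closed in the unit ball. Writing the splitting for $x_k \to x$ with $x_k \in \A_0$ and $\xi \in \dom(L_0)$, the only problematic term is $\|(x - x_k)U(\gamma_n)^*\xi\|$, in which the vector $U(\gamma_n)^*\xi$ depends on $n$. Here I would use energy control: Lemma \ref{lm:l0gammadomain} gives $U(\gamma_n)^* \dom(L_0) = \dom(L_0)$ (as $U(\gamma_n)L_0 U(\gamma_n)^* = L_0^{\gamma_n}$ and $\dom(L_0^{\gamma_n}) = \dom(L_0)$), and $\|L_0 U(\gamma_n)^*\xi\| = \|L_0^{\gamma_n}\xi\| = \|T^{\gamma_n}(1)\xi\|$ is bounded uniformly in $n$ by Proposition \ref{pr:energybound}, since $\|\gamma_{n*}1\|_{\frac32}$ and $\beta(\gamma_n,1)$ converge. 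As $L_0$ has compact resolvent in a positive energy representation (the eigenspaces being finite-dimensional), the set $\{U(\gamma_n)^*\xi\}$ is relatively norm-compact. Since a uniformly bounded, $*$-strongly convergent sequence $x_k \to x$ converges uniformly on norm-compact sets, $\sup_n \|(x - x_k)U(\gamma_n)^*\xi\| \to 0$ as $k \to \infty$, which closes the gap. Combining the three terms, first choosing $k$ large and then $n$ large, yields $*$-strong convergence for all $x$ on the dense domain $\dom(L_0)$, hence everywhere by uniform boundedness, and thus the desired strong continuity.
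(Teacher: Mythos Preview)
Your argument is correct but follows a genuinely different route from the paper's proof.

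The paper also starts by showing $e^{iT^{\gamma_n}(f)} \to e^{iT^\gamma(f)}$ strongly, but then proceeds quite differently: it uses that the lowest $L_0$-eigenspace is one-dimensional to fix the phases of $U(\gamma_n)$ so that the lowest weight vectors $\Omega^{\gamma_n}=U(\gamma_n)\Omega$ converge to $\Omega^\gamma$, and then shows directly that $U(\gamma_n)$ converges strongly on the total set of vectors $e^{iT(f_1)}\cdots e^{iT(f_k)}\Omega$; uniform boundedness finishes the proof. Your approach, by contrast, never fixes phases and works entirely at the level of $\Ad U$: you pass from the $*$-algebra $\A_0$ generated by the $e^{iT(f)}$ to all of $\B(\H)$ via Kaplansky density, controlling the problematic term $\|(x-x_k)U(\gamma_n)^*\xi\|$ by the observation that $L_0$ has compact resolvent (finite-dimensional eigenspaces), so that $\{U(\gamma_n)^*\xi\}$ is relatively norm-compact for $\xi\in\dom(L_0)$. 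Both arguments exploit a feature specific to the irreducible positive energy situation---the paper uses the one-dimensional lowest weight space, you use finite-dimensionality of all eigenspaces---so neither is obviously more general; the paper's route is slightly more elementary, while yours has the virtue of treating arbitrary sequences $\gamma_n\in\D^s(S^1)$ from the outset (the paper's proof as written takes $\gamma_n\in\diff(S^1)$, though the same argument works for general $\gamma_n$).
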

\begin{proof}
The only thing that remains to be proven is continuity, namely that the action $\alpha:\D^s(S^1)\rightarrow
\Aut(\B(\H(c,h)))$, $\gamma\mapsto \Ad U(\gamma)$ is pointwise continuous in the strong operator topology of $\B(\H(c,h))$.

Let $\lbrace\gamma_n\rbrace\subset\diff(S^1)$, $\gamma\in\D^s(S^1)$ with $\gamma_n\rightarrow \gamma$ in the topology of $\D^s(S^1)$. Then
\[
\lim_{n\rightarrow \infty}U(\gamma_n)e^{itT(f)}U(\gamma_n)^{*}=\lim_{n\rightarrow\infty}e^{itT^{\gamma_n}(f)}=e^{itT^{\gamma}(f)}
\]
where the limit is meant in the strong operator topology.
By taking $f=1$, we obtain the convergence of $L_0^{\gamma_n}$ to $L_0^\gamma$
in the strong resolvent sense.
As they are in the $(c,h)$-representation of the Virasoro algebra,
the lowest eigenprojections $E_0, E_0^\gamma$ are one-dimensional,
and it holds that $\lim_{n\to \infty}\Ad U(\gamma_n)(E_0) = E_0^\gamma$.
Let $\Omega, \Omega^\gamma$ be the lowest eigenvectors.
By fixing the scalars, we may assume that $\Omega^{\gamma_n} := U(\gamma_n)\Omega \to \Omega^\gamma$.

With this $U(\gamma_n)$ with fixed phase,
the sequence
\[
 U(\gamma_n)e^{iT(f_1)}\cdots e^{iT(f_k)}\Omega
 = e^{iT^{\gamma_n}(f_1)}\cdots e^{iT^{\gamma_n}(f_k)}\Omega^{\gamma_n}
\]
is convergent to $e^{iT^{\gamma}(f_1)}\cdots e^{iT^{\gamma}(f_k)}\Omega^{\gamma}$,
because all the operators $e^{iT^{\gamma_n}(f_1)},\cdots, e^{iT^{\gamma_n}(f_k)}$
are uniformly bounded and convergent in the strong operator topology.
Since vectors of the form $e^{iT(f_1)}\cdots e^{iT(f_k)}\Omega$ span a dense subspace of the whole Hilbert space $\H(c,h)$,
together with the uniform boundedness of $U(\gamma_n)$,
we obtain the convergence of $U(\gamma_n)$ to $U(\gamma)$ in the strong operator topology.

The claimed continuity follows from this, because for any $x\in \B(\H)$,
$\Ad U(\gamma_n)(x)$ is convergent in the strong operator topology, again
because $U(\gamma_n)$ is uniformly bounded.

\end{proof}

\begin{corollary}\label{cr:diff4}
Let $U=U_{(c,h)}$ be the irreducible unitary projective representation of $\diff(S^1)$ with central charge $c$ and lowest weight $h$.
Then $U$ extends to a strongly continuous irreducible unitary projective representation of $\diff^k(S^1)$ with $k\geq4$.
\end{corollary}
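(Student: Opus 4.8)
The plan is to realize $\diff^k(S^1)$ as a topological subgroup of $\D^k(S^1)$ and then simply restrict the representation already produced by Corollary \ref{cr:continuityB(H)}. First I would note that for $k \geq 4$ we have $k > 3$, so Corollary \ref{cr:continuityB(H)} applies with $s = k$ and furnishes a strongly continuous irreducible unitary projective representation of $\D^k(S^1)$ that extends $U = U_{(c,h)}$.

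Next I would invoke the continuous embedding $\diff^k(S^1)\hookrightarrow \D^k(S^1)$ recorded in Section \ref{sobolev}, which is immediate from the definitions: a $C^k$-diffeomorphism has a lift $\tilde\gamma$ with $\tilde\gamma - \iota \in H^k(S^1)$, and the $C^k$-topology is finer than the subspace topology induced from $\D^k(S^1)$. Restricting the $\D^k(S^1)$-representation to the subgroup $\diff^k(S^1)$ then yields a map $\gamma \mapsto U(\gamma)$ which is still a projective homomorphism, since multiplicativity up to phase is inherited, and which is strongly continuous: if $\gamma_n \to \gamma$ in $\diff^k(S^1)$, then $\gamma_n \to \gamma$ in $\D^k(S^1)$ by continuity of the embedding, whence $\Ad U(\gamma_n) \to \Ad U(\gamma)$ pointwise in the strong operator topology, as in the proof of Corollary \ref{cr:continuityB(H)}.

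For irreducibility I would use that the restriction still contains the original group. Since $\diff(S^1) \subset \diff^k(S^1)$ and the restriction of $U$ to $\diff(S^1)$ is the original irreducible representation, the commutant of $\{U(\gamma):\gamma\in\diff^k(S^1)\}$ is contained in the commutant of $\{U(\gamma):\gamma\in\diff(S^1)\}$, which consists only of scalars; note that the phase ambiguity does not affect these commutants. Hence the restricted representation is again irreducible.

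I expect no serious obstacle here: the statement for $C^k$-diffeomorphisms is essentially a specialization of the Sobolev result to the integer value $s = k \geq 4 > 3$. The only points requiring care are the elementary continuous inclusion $\diff^k(S^1)\hookrightarrow \D^k(S^1)$ and the bookkeeping of the projective phases under restriction, both of which are routine once Corollary \ref{cr:continuityB(H)} is in hand.
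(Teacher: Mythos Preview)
Your proposal is correct and follows essentially the same route as the paper: invoke the continuous embedding $\diff^k(S^1)\hookrightarrow \D^s(S^1)$ for $s\le k$ (you take $s=k$) and restrict the representation from Corollary~\ref{cr:continuityB(H)}. The paper's proof is the one-line version of what you wrote; your added remarks on continuity and irreducibility just make explicit what is implicit there.
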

\begin{proof}
 This is an immediate corollary of the continuous embedding $\diff^k(S^1) \hookrightarrow \D^s(S^1)$, $s \le k$.
\end{proof}

\begin{remark}\label{remarkGW} Our argument for the construction of projective representations of $\D^s(S^1)$ can be used to simplify
the proof of the integrability of the irreducible unitary positive energy representations of the Virasoro algebra
to strongly continuous projective unitary representations of $\diff(S^1)$.
Such a proof was first given in \cite[Section 3, Theorem 4.2]{GW85} by realizing them in the oscillator algebra.
One can do it now only within the Virasoro algebra as follows.

Besides the energy-bounds ({\it a priori estimates}) in \cite[Section 2]{GW85}, see also \cite{BS90}, which are used in \cite{CW05} and are crucial to our proof,
we also used \eqref{eq:Tgammasmooth} coming from \cite{GW85}.
More precisely, we used the fact that for every $\gamma \in \diff(S^1)$ there is a unitary operator $U(\gamma)$ such that 
$U(\gamma)T(f)U(\gamma)^* = T^\gamma(f)$ for all $f \in \vect(S^1)$ and $U(\gamma)\dom(L_0) = \dom(L_0)$. This can be proved directly following the strategy in pages 1100-1101 of \cite{CKL08},
see also the proof of \cite[Proposition 6.4]{CKLW18}. One only needs some of the direct consequences of the energy bounds proved in 
\cite[Section 2]{Toledano-Laredo99-1}. We outline the arguments here:
\begin{itemize}
 \item Since $\diff(S^1)$ is simple \cite[Remark 1.7]{Milnor84}, it is generated by exponentials,
 because the subgroup generated by exponentials is a normal subgroup.
 \item By the proof of Corollary \ref{cr:projective}, the set of $\gamma$ such that a unitary $U(\gamma)$ with
 the required properties exists forms a subgroup of $\diff(S^1)$. Hence, it is enough to consider the special case where $\gamma = \Exp(g)$ for 
 $g \in \vect(S^1)$.
 \item It follows from the {\it linear energy-bounds} by \cite[Proposition 2.1]{Toledano-Laredo99-1} that 
$e^{itT(g)}\dom(L_0^k) = \dom(L_0^k)$ for all positive integers $k$ and all $t \in \mathbb{R}$. As a consequence 
$e^{itT(g)} C^\infty(L_0) = C^\infty(L_0)$ for all $t \in \mathbb{R}$.
 \item Now, let $\xi \in C^\infty(L_0)$
and let $\xi(t) = T^{\Exp(tg)}(f)e^{itT(g)}\xi$.  By \cite[Corollary 2.2]{Toledano-Laredo99-1} we have
$\frac{d}{dt} e^{itT(g)}\xi = i e^{itT(g)} T(g)\xi$ in the graph topology of $\dom(L_0^k)$ for all positive integers $k$. It then follows from the energy bounds that $\frac{d}{dt}\xi(t) = i T(g)\xi(t)$. Hence, $\xi(t) = e^{itT(g)}T(f)\xi$ for all 
$\xi \in C^\infty(L_0)$ so that  $T^{\Exp(tg)}(f) =  e^{itT(g)}T(f) e^{-it T(g)}$ which is the required relation.
Continuity of $U$ follows as in Corollary \ref{cr:continuityB(H)}.
\end{itemize}

\end{remark}

\subsection{Direct sum of irreducible representations}
Here we prove that every positive
energy projective unitary representation of $\diff(S^1)$ extends to a unitary projective representation of $\D^s(S^1)$ for $s>3$. A similar result holds for the universal covering groups
provided that the representation is assumed to be a direct sum of
irreducibles.
This is not an immediate consequence of Corollary \ref{cr:continuityB(H)},
because, in general, the direct sum of projective representations does not make sense:
$\U(\H_j)/\CC$ is not a linear space.
On the other hand, if we have \textit{multiplier representations} of a group $G$ with the same cocycle,
$U_j(g_1)U_j(g_2) = \sigma(g_1,g_2)U_j(g_1 g_2)$ where $\sigma(g_1,g_2)$ is a 2-cocycle $H^2(G,\CC)$
of G, then the direct sum $\bigoplus_j U_j(g)$ is again a multiplier representation
with the same cocycle $\sigma$.
If we are interested in a projective representation of a certain quotient $G/H$ by a normal subgroup $H$
we have to make sure that the direct sum $\bigoplus U_j(h)$ reduces to a scalar when $h \in H$.

\paragraph{Continuous fragmentation of $\widetilde{\D^s(S^1)}$.}
Let $I$ be a proper open interval of $S^1$ and $I^\prime = (S^1\setminus I)^{\circ}$ be the interior of its complement. We denote by $\overline{I}$ the closure of $I$. $\diff(I)$ (resp.\! $\D^s(I)$) denotes the subgroup of diffeomorphisms $\diff(S^1)$ (resp.\! $\D^s(S^1)$) 
such that $\gamma(x)=x$ for $x\in I^\prime$. We also say that  $\gamma\in \diff(I)$ (resp.\! $\gamma\in\D^s(I)$) is supported in $I$.

Let $\{I_j\}_{j=1,2,3}$ be a cover of the unit circle as Fig.\! \ref{fig:intervals}.
Let us name the end points of the intervals: $I_k = (a_k, b_k)$.
We also take slightly smaller intervals $\hat I_k = (\hat a_k, \hat b_k) \subset I_k$
which still provide a cover of $S^1$, and take points $\breve a_1\in (a_1,\hat a_1), \breve b_1\in (\hat b_1, b_1)$,
c.f.\! \cite{DFK04}.
Furthermore, we take $\hat b_2, \check b_2$ such that $\hat a_1 < \hat b_2 < \check b_2 < b_2$.
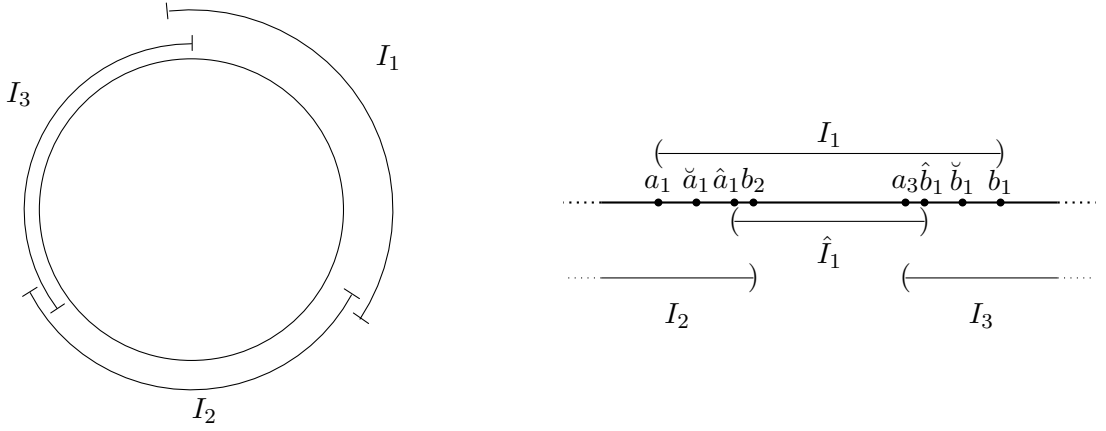
\begin{figure}[ht]
\centering
\begin{tikzpicture}[line cap=round,line join=round,>=triangle 45,x=1.0cm,y=1.0cm]
\clip(-2.43,-0.61) rectangle (6.15,5.35);
\draw(1.4,2.42) circle (2cm);
\draw [shift={(1.4,2.42)}] plot[domain=1.57:3.78,variable=\t]({1*2.2*cos(\t r)+0*2.2*sin(\t r)},{0*2.2*cos(\t r)+1*2.2*sin(\t r)});
\draw [shift={(1.4,2.42)}] plot[domain=3.62:5.79,variable=\t]({1*2.39*cos(\t r)+0*2.39*sin(\t r)},{0*2.39*cos(\t r)+1*2.39*sin(\t r)});
\draw [shift={(1.4,2.42)}] plot[domain=-0.58:1.68,variable=\t]({1*2.65*cos(\t r)+0*2.65*sin(\t r)},{0*2.65*cos(\t r)+1*2.65*sin(\t r)});
\draw (1.07,5.16)-- (1.09,4.95);
\draw (3.53,1.05)-- (3.73,0.91);
\draw (3.41,1.37)-- (3.61,1.24);
\draw (1.41,4.73)-- (1.41,4.53);
\draw (-0.45,1.05)-- (-0.27,1.17);
\draw (-0.82,1.27)-- (-0.63,1.39);
\draw (3.69,4.73) node[anchor=north west] {$I_1$};
\draw (1.27,0.05) node[anchor=north west] {$I_2$};
\draw (-1.17,4.25) node[anchor=north west] {$I_3$};
\end{tikzpicture}
\begin{tikzpicture}[path fading=north,scale=0.5]
         \draw [thick] (-6,0) --(6,0);
         \draw [thick,dotted] (-6,0) --(-7,0);
         \draw [thick,dotted] (6,0) --(7,0);
         \node at(0,1.8) {$I_1$};
         \draw [] (-2.5,-0.5) node{$($}--(2.5,-0.5)node{$)$};
         \node at(0,-1.3) {$\hat I_1$};
         \draw [] (-4.5,1.3) node{$($}--(4.5,1.3)node{$)$};
         \node at(-4.5,0.5) {$a_1$};
         \fill (-4.5,0) circle[radius=3pt];
         \node at(4.5,0.55) {$b_1$};
         \fill (4.5,0) circle[radius=3pt];
         \node at(-3.5,0.6) {$\breve a_1$};
         \fill (-3.5,0) circle[radius=3pt];
         \node at(-2.7,0.6) {$\hat a_1$};
         \fill (-2.5,0) circle[radius=3pt];
         \node at(2.7,0.65) {$\hat b_1$};
         \fill (2.5,0) circle[radius=3pt];
         \node at(3.5,0.65) {$\breve b_1$};
         \fill (3.5,0) circle[radius=3pt];
         \draw [dotted] (-6,-2) --(-7,-2);
         \node at(-4,-3) {$I_2$};
         \node at(-2,0.6) {$b_2$};
         \fill (-2,0) circle[radius=3pt];
         \draw [] (6,-2) --(2,-2)node{$($};
         \draw [] (-6,-2) --(-2,-2)node{$)$};
         \draw [dotted] (6,-2) --(7,-2);
         \node at(4,-3) {$I_3$};
         \node at(2,0.5) {$a_3$};
         \fill (2,0) circle[radius=3pt];
         \node at(0,-6) {};
\end{tikzpicture}
\caption{The covering of the unit circle.}
 \label{fig:intervals}
\end{figure}

Any given diffeomorphism $\g$ can be written as a product of
elements supported in $I_k$. This is known as
fragmentation (see \cite{Mann15} and references therein).
We need a slightly refined version of it, namely, if $\g$ is in a small neighborhood
$\V$ of the unit element $\i$, then we can take the fragments $\g_k$ also in a small,
but larger neighborhood $\hat \V$. The precise statement is the following.

\begin{lemma}\label{lm:fragmentation}
Let $s>\frac32$ and $k\in\{1,2,3\}$. There is a neighborhood $\V$ of the unit element $\iota$ of $\widetilde{\D^s(S^1)}$ and continuous localizing maps $\chi_k: \V \to$ $\widetilde{\D^s(I_k)}$ with
 \[
  \g = \chi_1(\g)\chi_2(\g)\chi_3(\g)
 \]
 and $\chi_k(\iota) = \iota$, $\supp \chi_k(\g) \subset I_k$, where
 $\supp \g := \overline{\{\theta \in S^1: \g(\theta) \neq \theta\}}$.
 If $\supp \g \subset \breve I_k\cup \breve I_{k+1}$, then $\chi_{k+2}(\g) = \iota$, where the indices $k+1$ and $k+2$ are considered $\mod 3$ as elements of $\{1,2,3\}$. 
 \end{lemma}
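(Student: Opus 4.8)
The plan is to build the localizing maps by cutting off the \emph{displacement} of $\g$ with fixed smooth bump functions and then peeling off one interval at a time. Working with the lifts to $\RR$, write an element of $\widetilde{\D^s(S^1)}$ near $\iota$ as $\g = \iota + u$ with $u$ a $2\pi$-periodic $H^s$ function, and set $\V = \{\g : \|u\|_{H^s} < \e\}$ with $\e$ to be fixed small. Fix smooth cutoffs $\phi_1,\phi_2 \in C^\infty(S^1,[0,1])$ with $\supp\phi_k \subset I_k$, equal to $1$ on suitable subintervals specified below, and define the first fragment by $\chi_1(\g) := \iota + \phi_1 u$. First I would check that this is a genuine element of $\widetilde{\D^s(I_1)}$: its derivative is $1 + \phi_1' u + \phi_1 u'$, and since $s>\tfrac32$ the Sobolev--Morrey embedding $H^s \hookrightarrow C^1$ (\cite[Proposition 2.2]{IKT13}) gives $\|u\|_{C^1} \le C\|u\|_{H^s} < C\e$, so the derivative is positive for $\e$ small; moreover $\phi_1 u \in H^s$ by the module property (Lemma \ref{lm:sobolevalgebra}), it is supported in $I_1$, and $\chi_1(\iota)=\iota$. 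Since $\phi_1 = 1$ on $\hat I_1$, we have $\chi_1(\g) = \g$ there.

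Next I would peel: set $\delta := \chi_1(\g)^{-1}\circ\g = \iota + v$. By the previous identity $\delta = \iota$ on $\hat I_1$, indeed $\delta(\theta)=\theta$ iff $u(\theta)(1-\phi_1(\theta))=0$, so $\supp\delta \subset \overline{\{\phi_1<1\}}\cap\supp\g \subset S^1\setminus\hat I_1 \subset I_2\cup I_3$ because the $\hat I_k$ cover $S^1$. I then define $\chi_2(\g) := \iota + \phi_2 v$ and $\chi_3(\g) := \chi_2(\g)^{-1}\circ\delta$, so that $\g = \chi_1(\g)\chi_2(\g)\chi_3(\g)$ by construction. The map $\chi_2(\g)$ lies in $\widetilde{\D^s(I_2)}$ exactly as $\chi_1$ did. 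For $\chi_3$ I would require $\phi_2 = 1$ on the part of $\supp\delta$ lying outside $I_3$; this is legitimate because that set is a compact subset of the interior of $I_2$ (the portion of $\supp\delta$ near the $I_2\cap I_3$ overlap is removed by $\setminus I_3$, and the portion near $\hat I_1$ is removed since $\supp\delta$ avoids $\hat I_1$). Then $\chi_2(\g)=\delta$ there and $\chi_3(\g)=\iota$ off $I_3$, whence $\supp\chi_3(\g)\subset I_3$.

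Continuity of all three maps on $\V$ follows by combining the continuity of $u\mapsto\phi_k u$ on $H^s$ (Lemma \ref{lm:sobolevalgebra}) with the continuity of composition and inversion and the fact that $\widetilde{\D^s(S^1)}$ is a topological group (Lemma \ref{lm:sobolevgroup}); in particular $\g\mapsto\delta$ is continuous with $\delta\to\iota$ as $\g\to\iota$, so after shrinking $\V$ I may assume $\|v\|_{H^s}$ is small enough that the second--stage construction of $\chi_2,\chi_3$ is valid. This yields the decomposition, the support conditions, and $\chi_k(\iota)=\iota$.

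The remaining and genuinely delicate point is the last assertion, that $\supp\g \subset \breve I_k\cup\breve I_{k+1}$ forces $\chi_{k+2}(\g)=\iota$. This is where the auxiliary points $\breve a_1,\breve b_1,\hat b_2,\dots$ enter: I would fix $\phi_1,\phi_2$ adapted to the nested intervals $\hat I_k\subset\breve I_k\subset I_k$ so that, tracking $\supp\chi_1(\g)\subset\supp\phi_1\cap\supp\g$ and $\supp\delta\subset\overline{\{\phi_1<1\}}\cap\supp\g$, each hypothesis annihilates the corresponding cutoff. For instance, choosing $\phi_1\equiv 0$ on $\breve I_2\cup\breve I_3$ makes $\chi_1(\g)=\iota$ whenever $\supp\g\subset\breve I_2\cup\breve I_3$; the cases $\chi_2(\g)=\iota$ and $\chi_3(\g)=\iota$ follow from analogous conditions forcing, respectively, $\phi_2=0$ or $\phi_2=1$ on $\supp\delta$. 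The main obstacle is to verify that a single consistent choice of $\phi_1,\phi_2$ simultaneously satisfies the requirements of the basic decomposition and all three special cases; this reduces to a finite collection of containments and disjointness relations among the overlaps of the $I_k$, $\breve I_k$ and $\hat I_k$ (in particular that $\supp\g\subset\breve I_2\cap\breve I_3$ triggers only the pair it should), which is exactly what the placement of the marked points in Figure \ref{fig:intervals} is designed to arrange.
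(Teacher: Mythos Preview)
Your construction for the basic decomposition is correct and is genuinely simpler than the paper's. The paper cuts off the \emph{derivative} $\g'-1$ with a bump $D_{\mathrm c,1}$ and must then add auxiliary compactly supported terms $\alpha_1(\g)D_{\mathrm l,1}+\beta_1(\g)D_{\mathrm r,1}$ with carefully chosen constants so that the integral becomes $2\pi$-periodic; you cut off the \emph{displacement} $u=\g-\iota$, which keeps periodicity for free and still gives $\chi_1(\g)=\g$ on $\{\phi_1=1\}\supset\hat I_1$ and $\supp\chi_1(\g)\subset\supp\phi_1\subset I_1$. The check that $\chi_1(\g)\in\widetilde{\D^s(I_1)}$, the peeling $\delta=\chi_1(\g)^{-1}\g$, and the continuity via Lemmas~\ref{lm:sobolevalgebra} and~\ref{lm:sobolevgroup} are all fine. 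So for the main content of the lemma your route is shorter and just as effective.

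The gap is in your treatment of the final clause. Your ``for instance, choosing $\phi_1\equiv 0$ on $\breve I_2\cup\breve I_3$'' is impossible, not merely delicate: the $\hat I_k$ cover $S^1$ and $\hat I_k\subset\breve I_k$, so $\hat I_1\cap\breve I_2\supset\hat I_1\cap\hat I_2\neq\emptyset$, directly contradicting $\phi_1\equiv 1$ on $\hat I_1$. More to the point, for \emph{any} choice of $\phi_1$ with $\phi_1\equiv 1$ on $\hat I_1$, picking $\g$ supported in the (nonempty) overlap $\hat I_1\cap\breve I_2$ gives $\supp\g\subset\breve I_2\cup\breve I_3$ yet $\chi_1(\g)=\iota+\phi_1 u=\iota+u=\g\neq\iota$. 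Thus the $k=2$ case of the last assertion is structurally unreachable by a linear displacement cutoff, and the obstacle does not reduce to a finite list of interval containments as you suggest. (The paper's own construction has the analogous feature: there $\g_1=\iota$ forces $\g=\iota$ on $[\breve a_1,\breve b_1]$, which $\supp\g\subset\breve I_2\cup\breve I_3$ does not guarantee; and the paper's proof does not verify this clause.) If you want to keep your approach, drop the uniform claim for all $k$; the only instance compatible with a fixed ordering $\chi_1\chi_2\chi_3$ is the $k=1$ case, and that one you can obtain by requiring $\phi_1\equiv 1$ on the part of $\breve I_1$ outside $I_2$ and $\phi_2\equiv 1$ on a suitable compact subset of $I_2$ containing the possible supports of $\delta$.
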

\begin{proof}
We may assume without loss of generality that
$0 < a_1 < \breve a_1 < \hat a_1 < b_2 < a_3 < \hat b_1 < \breve b_1 < b_1 < 2\pi$,
(see Figure \ref{fig:intervals}).

Let us take a smooth $2\pi$-periodic function $D_{\mathrm{c},1}$ with $D_{\mathrm{c},1}(t)=1$ for $t\in \hat I_1 = [\hat a_1, \hat b_1]$
and $D_{\mathrm{c},1}(t)=0$ for $t\in [0,\breve a_1]\cup [\breve b_1,2\pi]$
and $0 \le D_{\mathrm{c},1}(t) \le 1$ everywhere.
Let $0 \le D_{\mathrm{l},1}(t) \le 1$ be another smooth $2\pi$-periodic function with support in $(a_1,\breve a_1)$
and with $\int_0^{2\pi}D_{\mathrm{l},1}(t)dt = \int_{a_1}^{\breve a_1}D_{\mathrm{l},1}(t)dt= \frac12(\breve a_1-a_1)$
(which is possible because the interval $(a_1,\breve a_1)$ is longer than $\frac12(a_1,\breve a_1)$).
Similarly, let $0 \le D_{\mathrm{r},1}(t) \le 1$ be a smooth $2\pi$-periodic function
with support in $(\breve b_1,b_1)$ and with $\int_0^{2\pi}D_{\mathrm{r},1}(t)dt=\frac12(b_1 - \breve b_1)$.

We consider the following neighborhood of the unit element of $\widetilde{\D^s(S^1)}$
\[
 \V_{\e}\coloneqq \left\{\gamma \in \widetilde{\D^s(S^1)}: |\gamma(\theta)-\i(\theta)|<\e, |\gamma^{\prime}(\theta)-1|<\e
 \;\text{ for }\theta\in[0,2\pi]\right\}.
\]
Note that since $s>3/2$, $\V_\e$ is open by the Sobolev-Morrey embedding theorem.

Suppose $\gamma \in \V_{\e}$.
We set
\begin{align*}
M &:= \max\left\{D_{\mathrm{c}, 1}(t),  t \in[0,2\pi]\right\}
\end{align*}
and define the constant $\a_1(\gamma)$ by
\begin{align}\label{eq:alpha}
 \alpha_1(\gamma) = \frac2{\breve a_1 - a_1}\left(\gamma(\hat a_1)-\hat a_1 - \int_0^{\hat a_1} (\gamma^{\prime}(t)-1)D_{\mathrm{c},1}(t)dt\right).
\end{align}
It follows that 
\begin{equation}\label{estalpha}
|\alpha_1(\gamma)|\leq \frac {2}{|\breve a_1 - a_1|} \e (1+\hat{a}_1M)
\end{equation} by the definition of $\V_{\e}$ and

\[
 \gamma(\hat a_1)=\int_0^{\hat a_1} ((\gamma^{\prime}(t)-1)D_{\mathrm{c},1}(t)+1+\alpha_1(\gamma)D_{\mathrm{l},1}(t))dt. 
\]
Similarly, set the constant $\beta_1(\gamma)$ by
\begin{align}\label{eq:beta}
 \b_1(\gamma) &\;\;= \frac{-2}{b_1 - \breve b_1}\left(\int_0^{2\pi} ((\gamma^{\prime}(t)-1)D_{\mathrm{c},1} (t)+\alpha_1(\gamma)D_{\mathrm{l},1}(t))dt\right) \\
 &\left( =\frac2{b_1 - \breve b_1}\left(\hat b_1 - \g(\hat b_1) - \int_{\hat b_1}^{b_1} (\gamma^{\prime}(t)-1)D_{\mathrm{c},1} (t)\right) \right), \nonumber
\end{align}
then it follows that 
\begin{align}\label{estbeta}
|\beta_1(\gamma)|\leq \frac {2}{|b_1 - \breve b_1|} \e (|\hat{b}_1-b_1| M+1)
\end{align}
 and
\begin{align*}
 b_1 = \int_0^{b_1} ((\gamma^{\prime}(t)-1)D_{\mathrm{c},1} (t)+1+\alpha_1(\gamma)D_{\mathrm{l},1}(t)+\beta_1(\gamma)D_{\mathrm{r},1}(t))dt.
\end{align*}
Now, the function
\begin{align}\label{eq:gamma1}
 \gamma_1(\theta)=\int_0^\theta((\gamma^\prime (t)-1)D_{\mathrm{c},1}(t)+1+\alpha_1(\gamma)D_{\mathrm{l},1}(t)+\beta_1(\gamma)D_{\mathrm{r},1}(t))dt
\end{align}
is $2\pi$-periodic, the first derivative
\begin{align*}
 \gamma'_1(\theta)= (\gamma^\prime (\theta)-1)D_{\mathrm{c},1}(\theta)+1+\alpha_1(\gamma)D_{\mathrm{l},1}(\theta)+\beta_1(\gamma)D_{\mathrm{r},1}(\theta)
\end{align*}
is positive by \eqref{estalpha}, \eqref{estbeta} if $\e$ is taken sufficiently small and $\gamma_1^{\prime}-1\in H^{s-1}(S^1)$ (by Lemma \ref{lm:sobolevalgebra}, using that $\g-\iota\in H^s(S^1)$),
therefore, $\gamma_1$ can be regarded as an element in $\widetilde{D^s(S^1)}$.
It also has the desired properties,
namely $\gamma_1(\theta)=\theta$ for $\theta\in I_1'$ and $\gamma_1(\theta)=\gamma(\theta)$ for $\theta\in \hat I_1$.
Note that the assignment $\V_{\e}\rightarrow \widetilde{\D^s(S^1)}$, $\gamma\rightarrow\g_1$ is continuous
by \eqref{eq:gamma1}\eqref{eq:alpha}\eqref{eq:beta} and Lemma \ref{lm:derivative}.

We choose $\e$ such that $\gamma_1^{\prime}$ is positive for $\gamma\in\V_\e$. 
Now the assignment $\V_{\e}\rightarrow \widetilde{\D^s(S^1)}$, $\gamma\rightarrow\g\g_1^{-1} $ is continuous
by Lemma \ref{lm:sobolevgroup}.
We take $\V \subset \V_\e$ to be the neighborhood of the identity of $\widetilde{D^s(S^1)}$
such that for $\gamma\in\V$ we have $\g\g_1^{-1}\in\V_{\e_1}$ where $\e_1$ is small enough that
we obtain $\gamma_2\in\widetilde{D^s(S^1)}$ (in particular $\gamma_2^{\prime}$ is positive)
if we do an analogous construction on $I_2$ for $\gamma \gamma_1^{-1}$.

For $\gamma\in\V$ we set $\chi_1(\g) = \g_1$.
The continuity of the map $\chi_1$ in the topology of $\widetilde{\D^s(S^1)}$ is clear from \eqref{eq:gamma1} and \eqref{eq:alpha}\eqref{eq:beta}.

Next we construct $\chi_2(\g)$.
By construction $(\g\g_1^{-1})(\theta) = \theta$ for $\theta \in \hat I_1$,
therefore , $\supp \g\g_1^{-1} \subset I_2 \cup I_3$.
We can apply an analogous construction to $I_2$ and
$\gamma\g_1^{-1}$ to obtain $\g_2$ such that
$\supp \g_2 \subset \hat I_2, \g_2(\theta) = (\g\g_1^{-1})(\theta)$ for $\theta \in \hat I_2$.
In this way we obtain the continuous map $\chi_2(\g) := \g_2$.
Furthermore, by our choice $\hat a_1 < \hat b_2 < \check b_2 < b_2$, $\g_2(\theta) = (\g\g_1^{-1})(\theta)$
for $\theta \in \hat I_1$ where both are equal to $\theta$,
hence for $\hat I_1 \cup \hat I_2$.

Now we have $(\g\g_1^{-1}\g_2^{-1})(\theta) = \theta$ for $\theta \in \hat I_1 \cup \hat I_2$,
and as $\{\hat I_k\}$ is a cover of $S^1$, $(\hat I_1 \cup \hat I_2)' \subset \hat I_3$.
Therefore, if we set $\chi_3(\g) = \g\g_1^{-1}\g_2^{-1}$, it is supported in $\hat I_3 \subset I_3$
and the map $\chi_3$ is continuous because it is a composition of continuous maps (Lemma \ref{lm:sobolevgroup}).
\end{proof}

If $\g$ is already localized, we can have the following improvement.
\begin{lemma}\label{lm:fragmentation-local}
Let $s>\frac{3}{2}$, $k \in \{1,2,3\}$ and $\tilde I_k = I_k \cup I_{k+1}$ where the index $k+1$ is considered $\mod 3$ as an element of $\{1,2,3\}$.
Then there exists a neighborhood $\V$ of the unit element $\iota$ of $\widetilde{\D^s(S^1)}$
and continuous localizing maps
\begin{align*}
 \chi^{(k)}_k&: \V\cap \widetilde{\D^s(\tilde I_k)} \to \widetilde{\D^s(I_k)}, \\
\chi^{(k)}_{k+1}&: \V\cap \widetilde{\D^s(\tilde I_k)} \to \widetilde{\D^s(I_{k+1})},
\end{align*}
such that $\g = \chi^{(k)}_k(\g)\chi^{(k)}_{k+1}(\g)$
 and $\chi^{(k)}_k(\iota) = \chi^{(k)}_{k+1}(\iota) = \iota$.
\end{lemma}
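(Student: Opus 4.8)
The statement is the two-interval specialization of Lemma \ref{lm:fragmentation}, and my plan is to re-run the explicit construction used there, now exploiting that $\g$ is supported in only the two adjacent arcs $\tilde I_k = I_k \cup I_{k+1}$. Write $\tilde I_k = (p,q)$ for its two endpoints, ordered so that $p$ is the endpoint belonging to $I_k$ and $q$ the one belonging to $I_{k+1}$, and choose a coordinate with $p < u < v < q$ in which $I_k = (p,v)$, $I_{k+1} = (u,q)$ and the overlap is $(u,v)$. Since $\g \in \widetilde{\D^s(\tilde I_k)}$ we have $\g(p)=p$ and $\g = \iota$ outside $(p,q)$. The guiding idea is to build a first fragment $\chi^{(k)}_k(\g) \in \widetilde{\D^s(I_k)}$ that reproduces $\g$ on the part $[p,u]$ of $I_k$ lying outside $I_{k+1}$ and then to set $\chi^{(k)}_{k+1}(\g) := \chi^{(k)}_k(\g)^{-1}\g$; the remainder is then forced to be the identity off $I_{k+1}$. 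The decisive simplification compared with Lemma \ref{lm:fragmentation} is that, since $\g(p)=p$, the first fragment can match $\g$ on $[p,u]$ with no integration-constant correction at that end, so that only a single correction on the overlap (playing the role of $\b$ in \eqref{eq:beta}) is needed to restore $2\pi$-periodicity.

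Concretely, I would fix a smooth $2\pi$-periodic cutoff $D_{\mathrm c}$ with $D_{\mathrm c}\equiv 1$ on $[p,u]$ and $D_{\mathrm c}\equiv 0$ on $[v,q]$, tapering to $0$ inside the overlap $(u,v)$; its unavoidable ramp near $p$ can be placed in the complement of $\tilde I_k$, where $\g'\equiv 1$, hence does not affect the formula below. I also fix a smooth bump $D_{\mathrm r}$ supported in $(u,v)$ with $\int_0^{2\pi}D_{\mathrm r}\,dt = 1$. On the neighborhood $\V = \{\g : |\g(\theta)-\theta| < \e,\ |\g'(\theta)-1| < \e \ \text{for all } \theta\}$ I then set
\[
 \chi^{(k)}_k(\g)(\theta) = \int_0^\theta\big((\g'(t)-1)D_{\mathrm c}(t) + 1 + \b(\g)D_{\mathrm r}(t)\big)\,dt,
 \qquad \b(\g) = -\int_0^{2\pi}(\g'(t)-1)D_{\mathrm c}(t)\,dt,
\]
in analogy with \eqref{eq:gamma1}. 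The choice of $\b(\g)$ forces $\int_0^{2\pi}(\chi^{(k)}_k(\g)'-1)\,dt = 0$, so $\chi^{(k)}_k(\g)-\iota$ is $2\pi$-periodic and $\chi^{(k)}_k(\g)(v)=v$; together with $\g'\equiv 1$ outside $(p,q)$ this yields $\chi^{(k)}_k(\g)=\iota$ off $(p,v)=I_k$. For $\e$ small the integrand is positive (by an estimate like \eqref{estbeta}) and $\chi^{(k)}_k(\g)'-1\in H^{s-1}(S^1)$ by Lemma \ref{lm:sobolevalgebra}, so $\chi^{(k)}_k(\g)\in\widetilde{\D^s(I_k)}$.

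I would then define $\chi^{(k)}_{k+1}(\g) := \chi^{(k)}_k(\g)^{-1}\g$, so that $\g = \chi^{(k)}_k(\g)\,\chi^{(k)}_{k+1}(\g)$ holds by construction; the heart of the argument is to check $\chi^{(k)}_{k+1}(\g)\in\widetilde{\D^s(I_{k+1})}$, i.e.\ that it is the identity on the interior of $S^1\setminus I_{k+1}$. On $[p,u]$ one has $D_{\mathrm c}\equiv 1$ and $D_{\mathrm r}\equiv 0$, whence $\chi^{(k)}_k(\g)(\theta)=p+\int_p^\theta \g'(t)\,dt=\g(\theta)$ (using $\g(p)=p$), so $\chi^{(k)}_k(\g)^{-1}\g=\iota$ there; outside $\tilde I_k$ both $\g$ and $\chi^{(k)}_k(\g)$ are the identity. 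Since these two regions together exhaust the complementary arc of $I_{k+1}=(u,q)$, the claim follows, and membership in $\widetilde{\D^s(S^1)}$ is automatic, being a product of such elements (Lemma \ref{lm:sobolevgroup}).

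Finally, continuity of $\chi^{(k)}_k$ follows from the explicit formula together with continuity of $\g\mapsto\g'$ and of $\g\mapsto\b(\g)$ (Lemma \ref{lm:derivative} and Lemma \ref{lm:sobolevalgebra}), and continuity of $\chi^{(k)}_{k+1}=\chi^{(k)}_k(\g)^{-1}\g$ then follows from Lemma \ref{lm:sobolevgroup}; the normalization $\chi^{(k)}_k(\iota)=\chi^{(k)}_{k+1}(\iota)=\iota$ is immediate since $\g=\iota$ makes both $\g'-1$ and $\b(\g)$ vanish. I expect the only genuinely delicate point to be this localization of $\chi^{(k)}_{k+1}(\g)$ in $I_{k+1}$, which is exactly where the hypothesis $\g\in\widetilde{\D^s(\tilde I_k)}$ (forcing $\g(p)=p$, so that $\chi^{(k)}_k(\g)$ may coincide with $\g$ on all of $[p,u]$) is used; everything else simply transcribes the proof of Lemma \ref{lm:fragmentation}.
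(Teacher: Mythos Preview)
Your proposal is correct and follows essentially the same route as the paper, which simply says to rerun the construction of $\chi_2$ and $\chi_3$ from Lemma \ref{lm:fragmentation} on (slightly enlarged) $I_k$; you carry this out explicitly and even streamline it by using $\g(p)=p$ to drop the $\alpha$--type correction, keeping only a single $\beta$--type correction on the overlap. The only implicit choice worth making explicit is that the basepoint $0$ of your integral lies outside $\tilde I_k$ (so that $\chi^{(k)}_k(\g)(p)=p$ is automatic), but this is harmless since $\tilde I_k$ is a proper arc.
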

\begin{proof}
 Without loss of generality, we may assume $k=2$.
 This is done by applying the steps of the construction of $\chi_2$ and $\chi_3$ in the proof of Lemma \ref{lm:fragmentation}
 to slightly enlarged $I_2$ and $\hat I_2$, so that $\chi^{(2)}_2(\g)(\theta) = \g(\theta)$  for $\theta \in I_3'$.
\end{proof}

\begin{lemma}\label{lm:localequivalence}
Let $U_{(c,h_1)}, U_{(c,h_2)}$ be irreducible, projective representations of
$\widetilde{\D^s(S^1)}$ with central charge $c$ and lowest weight $h_1,h_2$ respectively,
constructed as in Section \ref{extension}.
Let $I$ be a proper interval of $S^1$. Then the projective representations $U_{(c,h_1)}$ and $U_{(c,h_2)}$
restricted to $\D^s(I)$
are unitarily equivalent. Furthermore, a unitary $U$ intertwines $U_{(c,h_1)}$ and $U_{(c,h_2)}$ restricted to $\D^s(I)$ if and only if it intertwines $T_{(c,h_1)}(f)$ and $T_{(c,h_2)}(f)$ for every $f\in\vect(S^1)$ with support in $I$.
\end{lemma}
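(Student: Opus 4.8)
The plan is to prove the characterization (the ``if and only if'') first, and then deduce the equivalence itself by combining it with the known local equivalence of the stress--energy tensors. Throughout I work projectively, i.e.\ at the level of the adjoint actions, as in Section \ref{projective}: the statement that a unitary $U\colon\H(c,h_1)\to\H(c,h_2)$ intertwines the restricted projective representations means $\Ad U\circ\Ad U_1(\g)=\Ad U_2(\g)\circ\Ad U$ on $\B(\H(c,h_1))$ for all $\g\in\D^s(I)$, where I abbreviate $T_j:=T_{(c,h_j)}$ and $U_j:=U_{(c,h_j)}$. I use repeatedly that for a smooth real $f$ with $\supp f\subset I$ one has $\Exp(f)\in\diff(I)\subset\D^s(I)$ and $U_j(\Exp(f))=e^{iT_j(f)}$ up to a phase.

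For the direction \emph{group intertwiner $\Rightarrow$ generator intertwiner}, I would apply the hypothesis to $\g=\Exp(tf)$ with $\supp f\subset I$, obtaining $U e^{itT_1(f)}U^*=\theta_f(t)\,e^{itT_2(f)}$ for some $\theta_f(t)\in\mathbb{T}$; since the left-hand side is a one-parameter group, $\theta_f$ is a continuous character, so $\theta_f(t)=e^{i\lambda_f t}$, and differentiating gives $U T_1(f)U^*=T_2(f)+\lambda_f$ with $\lambda_f\in\RR$ depending linearly on $f$. To kill the constant I use the Virasoro relations of Proposition \ref{pr:covariance}: on the core $\H^\fin(c,h_1)$ one has $i[T_1(g),T_1(f)]=T_1(g'f-f'g)+c\,\omega(g,f)$, and since $\H^\fin(c,h_1)$ is invariant under each $T_1(f)$ and $U\dom(T_1(f))=\dom(T_2(f))$, conjugation by $U$ is legitimate on $U\H^\fin(c,h_1)$ and yields $\lambda_{g'f-f'g}=0$ for all $f,g\in\vect(I)$ (the central term and the constants drop out). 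Now I invoke that $\vect(I)$ is perfect: every $h$ with $\supp h\subset I$ is a finite sum of brackets $g'f-f'g$ with $f,g\in\vect(I)$ (those with $\int h=0$ are single brackets $[F,\chi]$ with $F'=h$ and $\chi\equiv1$ on $\supp F$, and a nonzero total integral is produced by one further bracket). Perfectness forces $\lambda_f\equiv0$, so $U T_1(f)=T_2(f)U$ for every $f$ supported in $I$; this step is exactly what makes the equivalence constant-free, and hence the statement true as phrased.

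For the converse, assume $U T_1(f)=T_2(f)U$ for all $f\in\vect(S^1)$ with $\supp f\subset I$; then $U e^{iT_1(f)}U^*=e^{iT_2(f)}$, so $\Ad U$ intertwines $\Ad U_1(\Exp(f))$ and $\Ad U_2(\Exp(f))$ for all such $f$. Since $\diff(I)$ is connected and simple, the subgroup generated by the $\Exp(f)$, $\supp f\subset I$, is normal (conjugation sends $\Exp(f)$ to $\Exp(\g_*f)$ with $\g_*f$ again supported in $I$) and nontrivial, hence all of $\diff(I)$; therefore $\Ad U$ intertwines $\Ad U_1$ and $\Ad U_2$ on $\diff(I)$, exactly as in Remark \ref{remarkGW}. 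Finally, every $\g\in\D^s(I)$ is a limit in $\D^s(S^1)$ of smooth diffeomorphisms still supported in $I$ (a localized version of Lemma \ref{lem:localapprox}), and by the strong continuity of the extended representations (Corollary \ref{cr:continuityB(H)}) the intertwining relation passes to the limit, giving the intertwiner on all of $\D^s(I)$.

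It remains to establish the equivalence itself, and the plan is to reduce it, via the characterization just proved, to producing a single unitary $U\colon\H(c,h_1)\to\H(c,h_2)$ with $U T_1(f)=T_2(f)U$ for every $f$ supported in $I$. Such a $U$ exists because positive energy representations sharing the central charge $c$ are locally equivalent: the local von Neumann algebras generated by $\{e^{iT_{(c,h)}(f)}:\supp f\subset I\}$ are type III factors and the $(c,h_1)$-- and $(c,h_2)$--representations are mutually locally normal, so the local isomorphism is spatially implemented by a unitary $U$ with $U e^{itT_1(f)}U^*=e^{itT_2(f)}$ for all $t\in\RR$ and all $f$ supported in $I$ (cf.\ \cite{CW05}); by Stone's theorem this is precisely $U T_1(f)=T_2(f)U$, and the converse direction of the characterization then upgrades it to the desired equivalence of $U_1$ and $U_2$ on $\D^s(I)$. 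I expect the main obstacle to lie in this last input, the local equivalence of the stress--energy representations for distinct lowest weights, which rests on the type III factoriality and local normality of the Virasoro sectors; the most delicate internal point is the purely analytic one of approximating an $H^s$ diffeomorphism supported in $I$ by \emph{smooth} diffeomorphisms whose support stays inside $I$, which is what drives the converse direction.
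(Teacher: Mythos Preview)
Your approach is sound and in several places more explicit than the paper's, but the paper organizes the argument differently and thereby avoids the one point you yourself flag as delicate.

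The paper does not prove the characterization first. It fixes a slightly larger proper interval $\tilde I\supset\overline I$, invokes \cite[Theorem 5.6]{Weiner17} to obtain a unitary $W$ intertwining $U_{(c,h_1)}$ and $U_{(c,h_2)}$ on all of $\diff(\tilde I)$, and then uses Lemma \ref{lem:localapprox} to approximate any $\g\in\D^s(I)$ by smooth $\g_n\in\diff(\tilde I)$; continuity (Corollary \ref{cr:continuityB(H)}) carries the intertwining to $\D^s(I)$. The ``furthermore'' is dispatched by citing \cite[Lemma 2.1]{Weiner17}. Your perfectness-of-$\vect(I)$ and simplicity-of-$\diff(I)$ arguments unpack exactly what that citation does, and they are correct; the gain is self-containment, the cost is length.

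The genuine gap is your approximation step. You need every $\g\in\D^s(I)$ to be a $\D^s$-limit of smooth diffeomorphisms \emph{supported in $I$}. Lemma \ref{lem:localapprox} does not give this: the convolution there produces $\g_n$ with $\supp\g_n$ strictly larger than $\overline I$, shrinking down to $\overline I$ from outside. That is why the paper passes to $\tilde I\supset\overline I$ and gets the intertwiner on the larger interval first; with $W$ already intertwining all of $\diff(\tilde I)$, the approximants from Lemma \ref{lem:localapprox} lie where $W$ is known to work. Your route, as written, requires density of $\diff(I)$ in $\D^s(I)$, which is a separate (and more delicate) Sobolev approximation statement not established in the paper. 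It is likely true for $s>3$, but you would need to supply the argument; alternatively, you can import the paper's enlargement trick into your framework by obtaining the generator intertwiner on $\tilde I$ (your type III\,/\,local normality input works just as well there) and then running your converse direction with $\diff(\tilde I)$ in place of $\diff(I)$.
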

\begin{proof}
Let $\tilde I$ an open proper interval of $S^1$ such that $\tilde I\supset\overline{I}$.
By \cite[Theorem 5.6]{Weiner17} there exists a unitary $W$ which intertwines the representations $U_{(c,h_1)}, U_{(c,h_2)}$
when restricted to $\diff(\tilde I)$. Let $\g\in\D^s(I)$, then by Lemma \ref{lem:localapprox} there exists a sequence of $C^{\infty}$-diffeomorphisms
$\lbrace\g_n\rbrace\subset\diff(\tilde I)$ converging to $\g$. By Corollary \ref{cr:continuityB(H)},
\begin{align*}
\Ad WU_{(c,h_1)}(\g)W^*& 
= \Ad \lim_{n\rightarrow\infty} WU_{(c,h_1)}(\g_n)W^*
= \Ad \lim_{n\rightarrow\infty} U_{(c,h_2)}(\g_n) 
= \Ad U_{(c,h_2)}(\g).
\end{align*}
The last assertion follows from \cite[Lemma 2.1]{Weiner17}.
\end{proof}

We are going to show that we can take the direct sum of irreducible projective representations of $\D^s(S^1)$,
$\{U_{(c,h_j)}\}$, with the same central charge $c$ but possibly different lowest weights $\{h_j\}$,
where differences $h_j - h_{j'}$ are integers.
We split the proof into two steps.
First, we make $U_{(c,h_j)}$ into continuous multiplier representations with the same cocycle in some neighborhood $\V$
of the identity diffeomorphism $\iota\in \widetilde{\D^s(S^1)}$.
Then it is straightforward to take the direct sum.
Next, we show that the direct sum representation reduces to a projective representation of $\D^s(S^1)$
if the differences $h_j - h_{j'}$ are integers.

Let $G$ and $G'$ be two topological groups. Given a neighborhood $\V$ of the identity in $G$, a continuous map
$\mu:\V\rightarrow G'$ is a local homomorphism if $\mu(g_1)\mu(g_2)=\mu(g_1g_2)$ for all $g_1,g_2\in\V$ and $g_1g_2\in\V$.

We say that a map $U$ is a local unitary multiplier representation of a topological group $G$ on a neighborhood $\V$ of the identity  if $U$ is a map from $\V$ to the unitary group $\U(\H)$ of a Hilbert space $\H$ which satisfies the equality $U(g_1)U(g_2)=\sigma(g_1,g_2)U(g_1g_2)$, where $\sigma:\V\times\V\rightarrow\mathbb{T}$ and $\sigma(g_1,g_2)\sigma(g_1g_2,g_3)=\sigma(g_1,g_2g_3)\sigma(g_2,g_3)$ whenever $g_1,g_2,g_3$, $g_1g_2$ and $g_2g_3$ are in $\V$.
The following is obtained by reversing the idea of \cite{Tanimoto18-2}.

\begin{proposition}\label{pr:directsum}
 Let $s>3$. For a family $\{(c, h_j)\}$ of pairs with the same central charge $c$,
 there is a neighborhood $\V$ of $\widetilde{\D^s(S^1)}$ such that the irreducible unitary projective representations $U_{(c,h_j)}$ lift to local multiplier representations of $\V$ with the same cocycle $\sigma_c(\cdot,\cdot)$.
\end{proposition}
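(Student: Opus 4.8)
The plan is to produce, for every $j$, a strongly continuous lift $U_{(c,h_j)}\colon \V\to\U(\H(c,h_j))$ of the projective representation whose multiplier cocycle is literally the \emph{same} function of $(\gamma_1,\gamma_2)$ for all $j$. The key observation is the following: on the smooth group $\widetilde{\diff(S^1)}$ each $U_{(c,h_j)}$ is realized, as recalled in Section~\ref{preliminaries}, by the Bott--Virasoro multiplier representation $\underline{V}_{(c,h_j)}(\gamma)=V_{(c,h_j)}(\gamma,0)$, whose cocycle is $e^{icB(\mathring{\gamma}_1,\mathring{\gamma}_2)}$ because the central element acts by the scalar $e^{ict}$; this cocycle depends only on the common central charge $c$ and not on the lowest weight $h_j$. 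Since moreover the fragmentation maps $\chi_k$ of Lemma~\ref{lm:fragmentation} live on the group $\widetilde{\D^s(S^1)}$ and are the same for all $j$, it is natural to build the global phase by assembling the localized Bott--Virasoro phases, and to read off a weight-independent cocycle from $B$.

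Concretely, I would first shrink $\V$ so that Lemma~\ref{lm:fragmentation} and its refinement Lemma~\ref{lm:fragmentation-local} apply to $\gamma_1,\gamma_2$ and to $\gamma_1\gamma_2$, with all relevant fragments staying in the domains of the $\chi_k$. On smooth localized diffeomorphisms I fix the phase of $U_{(c,h_j)}$ to be $\underline{V}_{(c,h_j)}$, and I extend this choice to the non-smooth localized factors $\chi_k(\gamma)\in\widetilde{\D^s(I_k)}$ by strong limits along smooth approximants in $\widetilde{\diff(I_k)}$ (as in the proof of Corollary~\ref{cr:continuityB(H)}); the limit of the fixed-phase unitaries exists because the relevant Bott data extend continuously to $\D^s$ by Lemma~\ref{lm:gamma32sobolev}(c). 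I then set
\[
U_{(c,h_j)}(\gamma):=U_{(c,h_j)}(\chi_1(\gamma))\,U_{(c,h_j)}(\chi_2(\gamma))\,U_{(c,h_j)}(\chi_3(\gamma)).
\]
Continuity of $\gamma\mapsto U_{(c,h_j)}(\gamma)$ follows from continuity of the $\chi_k$ (Lemma~\ref{lm:fragmentation}) and of the localized lifts, and since $q\circ U_{(c,h_j)}$ recovers the projective representation, $U_{(c,h_j)}$ is automatically a local multiplier representation on $\V$ with some continuous cocycle $c_j(\cdot,\cdot)$.

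To finish I would show $c_j$ is independent of $j$ by reducing to smooth diffeomorphisms. The cocycle $c_j(\gamma_1,\gamma_2)=U_{(c,h_j)}(\gamma_1)U_{(c,h_j)}(\gamma_2)U_{(c,h_j)}(\gamma_1\gamma_2)^{-1}$ is a continuous $\mathbb{T}$-valued function, and smooth diffeomorphisms are dense in $\widetilde{\D^s(S^1)}$ (as used in the proof of Corollary~\ref{cr:continuityB(H)}), so it suffices to treat $\gamma_1,\gamma_2\in\widetilde{\diff(S^1)}$. For such elements every localized factor equals $\underline{V}_{(c,h_j)}(\chi_k(\gamma))$, so applying the cocycle $e^{icB}$ twice gives $U_{(c,h_j)}(\gamma)=b(\gamma)\,\underline{V}_{(c,h_j)}(\gamma)$ with $b(\gamma)=e^{ic(B(\chi_1(\gamma),\chi_2(\gamma))+B(\chi_1(\gamma)\chi_2(\gamma),\chi_3(\gamma)))}$, a scalar depending only on $c$, on $B$, and on the weight-independent fragments $\chi_k(\gamma)$. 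Hence
\[
c_j(\gamma_1,\gamma_2)=\frac{b(\gamma_1)\,b(\gamma_2)}{b(\gamma_1\gamma_2)}\,e^{icB(\mathring{\gamma}_1,\mathring{\gamma}_2)},
\]
which contains no reference to $h_j$. Thus $c_j$ is the same function for all $j$ on smooth elements, and by continuity on all of $\V$; this common cocycle is the desired $c(\cdot,\cdot)$.

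The main obstacle is the continuous and weight-coherent extension of the Bott--Virasoro phase to the non-smooth localized factors: one must verify that the fixed-phase unitaries $\underline{V}_{(c,h_j)}(\gamma_n)$ really converge strongly, and not merely projectively, as $\gamma_n\to\chi_k(\gamma)$ in $\widetilde{\D^s(I_k)}$, and that the limiting cocycle is the continuous extension of $e^{icB}$. This is exactly where the localized structure must be controlled: Lemma~\ref{lm:localequivalence} identifies the restriction of $U_{(c,h_j)}$ to a proper arc through intertwiners characterized by the stress-energy tensors $T(f)$, which are insensitive to $h_j$, and the support bookkeeping of Lemmas~\ref{lm:fragmentation} and~\ref{lm:fragmentation-local} confines every elementary reordering to a proper sub-arc $I_k\cup I_{k+1}$ on which such data are available, while Lemma~\ref{lm:gamma32sobolev}(c) supplies the continuity of the Bott term $\beta$ in the $\D^s$-topology. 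A final routine point is to shrink $\V$ so that $\gamma_1$, $\gamma_2$, $\gamma_1\gamma_2$ and all fragments occurring in the computation remain in the domain of the fragmentation maps.
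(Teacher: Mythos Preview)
Your approach has a genuine gap, and you in fact identify it yourself without resolving it: you need the Bott--Virasoro fixed-phase unitaries $\underline{V}_{(c,h_j)}(\gamma_n)$ to converge \emph{strongly} (not merely projectively) as $\gamma_n\to\chi_k(\gamma)$ in the $\D^s$-topology. The justification you offer, Lemma~\ref{lm:gamma32sobolev}(c), only gives continuity of $\beta(\gamma,f)$, which is an infinitesimal datum; it does not yield strong continuity of $\gamma\mapsto\underline{V}_{(c,h_j)}(\gamma)$ on $\widetilde{\diff(S^1)}$ in the $\D^s$-topology. Corollary~\ref{cr:continuityB(H)} produces \emph{some} phase choice along an approximating sequence that converges strongly (by aligning with the lowest-weight vector), but there is no reason this coincides with the Bott--Virasoro phase. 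Even knowing that the Bott cocycle $B$ itself extends continuously to $\D^s\times\D^s$ would not suffice: if $\underline{V}=\alpha\cdot\tilde U$ for some continuous Bargmann lift $\tilde U$, continuity of both cocycles only gives continuity of the coboundary $\alpha(\gamma_1)\alpha(\gamma_2)\alpha(\gamma_1\gamma_2)^{-1}$, not of $\alpha$. This is precisely the point the paper flags as open in Section~\ref{outlook}: ``we do not know whether there is a multiplier representation of $\widetilde{\D^s(S^1)}$ with the Bott--Virasoro cocycle.''

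The paper circumvents this entirely. It never attempts to extend the Bott--Virasoro phase. Instead it fixes \emph{one} weight $h_1$, invokes Bargmann's theorem to obtain an abstract continuous local multiplier lift $U_1$ of the already-established continuous projective representation on a neighborhood $\hat\V\subset\widetilde{\D^s(S^1)}$, with some unknown cocycle $c(\cdot,\cdot)$. The other $U_j$ are then \emph{defined} on localized pieces by conjugating $U_1$ with the local intertwiners $V_{j,k}$ of Lemma~\ref{lm:localequivalence}, $U_j(\chi_k(\gamma)):=V_{j,k}U_1(\chi_k(\gamma))V_{j,k}^*$, and glued via the fragmentation of Lemmas~\ref{lm:fragmentation}--\ref{lm:fragmentation-local} with explicit cocycle corrections. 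A combinatorial argument then shows that the $U_j$ so constructed share the cocycle $c$ of $U_1$. The density-of-smooth-diffeomorphisms argument you propose is thus replaced by purely algebraic bookkeeping on localized pieces, with continuity inherited from $U_1$ rather than from any Bott--Virasoro data.
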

\begin{proof}
Let us take $h_1$.
As explained in Section \ref{projective} (cf. also \cite[Proposition 12.44]{Moretti17}), in a neighborhood $\hat{\V}$ of the identity $\iota\in\widetilde{\D^s(S^1)}$,
$U_{(c,h_1)}$ lifts to a continuous multiplier representation, with some continuous cocycle $\sigma_c(\cdot,\cdot)$,
which we will denote by $U_1$.

Because $\widetilde{\D^s(S^1)}$ is a topological group, and by Lemmas \ref{lm:fragmentation}, \ref{lm:fragmentation-local},
for each neighborhood $\W$, there is a smaller neighborhood $p(\W)$ such that
$p(\W)^2 \subset \W$ and $\chi_k(\g), \chi^{(k)}_k(\g), \chi^{(k)}_{k+1}(\g) \subset \W$ for $\g \in p(\W)$.
We take $\V = p^{11}(\hat \V) = \underset{11\text{-times}}{\underbrace{p(p(p(\cdots \hat \V\cdots)))}}$.

\paragraph{Construction of multiplier representations $U_j$.}
We show that we can take $U_j$ with the same cocycle $\sigma_c(\cdot,\cdot)$. 
Let us take a local multiplier representation $U_1 = U_{(c,h_1)}$ with $(c,h_1)$.

We fix a covering $\{I_k\}$ of $S^1$ as in Lemma \ref{lm:fragmentation}.
For $\g \in p(\hat \V)$, we define $U_j$ as follows:
By Lemma \ref{lm:localequivalence},
there are unitary intertwiners $\{V_{j,k}\}$ between $U_{(c,h_1)}$ and $U_{(c,h_j)}$ restricted to $\D^s(I_k)$.
We set
\[
 U_j(\chi_k(\gamma))=\Ad V_{j,k}(U_1(\chi_k(\gamma))),
\]
which makes sense because $p(\hat \V) \subset \hat \V$.
Note that $U_j(\chi_k(\g))$ does not depend on the choice of unitary intertwiner $V_{j,k}$,
since, if $V_{j,k}$ and $\hat{V}_{j,k}$ are both unitary intertwiners,
then by Lemma \ref{lm:localequivalence}
\[
 \Ad V_{j,k}^*\hat{V}_{j,k}(U_j(\chi_k(\g)))=U_j(\chi_k(\g)) 
\]
for $\g$ smooth, and by continuity of $U_1$ for $\chi_k(\g)\in \D^s(I_k) \cap \hat \V$.

Let us denote $\g_k = \chi_k(\g)$ for simplicity.
Now, since $\g=\g_1\g_2\g_3$ with $\g_k\in \D^s(I_k)\cap \hat \V$, we can define $U_j(\g)$ by
\begin{align}\label{eq:defpi}
 U_j(\g)=U_j(\g_1)U_j(\g_2)U_j(\g_3)\sigma_c(\g_1,\g_2)^{-1}\sigma_c(\g_1\g_2,\g_3)^{-1},
\end{align}
and note that the corresponding equation holds for $U_1$.
\paragraph{Well-definedness.}
We used a particular set of maps $\chi_k$ to define $U_j$, 
but actually
they do not depend on the choice of such map $\chi_k$ if $\g$ satisfies certain properties and is sufficiently close to $\i$.
Namely, we take two decompositions $\g = \g_1\g_2\g_3 = \g'_1\g'_2\g'_3$
where $\g_k, \g_k' \in \D^s(I_k) \cap p^5(\hat\V)$.

It holds that $\g_3^{-1}\g_2^{-1}\g_1^{-1}\g'_1\g'_2\g'_3 = \iota$ in $\widetilde{\D^s(S^1)}$
and $U_1(\g_1)^* = \sigma_c(\g_1,\g_1^{-1})U_1(\g_1^{-1})$,
hence we have
\[
 \sigma_c(\g_1,\g_2,\g_3,\g'_1,\g'_2,\g'_3) := U_1(\g_3)^*U_1(\g_2)^*U_1(\g_1^{-1}\g'_1)U_1(\g'_2)U_1(\g'_3) \in \mathbb{C}.
\]
Furthermore, as $U_1$ is a multiplier representation in $\hat \V$, we have
\begin{align*}
 U_1(\g) &= U_1(\g_1)U_1(\g_2)U_1(\g_3)\sigma_c(\g_1,\g_2)^{-1}\sigma_c(\g_1\g_2,\g_3)^{-1} \\
 &= U_1(\g'_1)U_1(\g'_2,)U_1(\g'_3)\sigma_c(\g'_1,\g'_2)^{-1}\sigma_c(\g'_1\g_2',\g'_3)^{-1}.
\end{align*}
By putting all factors in one side, we obtain
\begin{align}\label{eq:c6}
  \sigma_c(\g_1,\g_2,\g_3,\g'_1,\g'_2,\g'_3)\sigma_c(\g_1^{-1},\g'_1)\sigma_c(\g_1,\g_1^{-1})\sigma_c(\g_1,\g_2)\sigma_c(\g_1\g_2,\g_3)\sigma_c(\g'_1,\g'_2)^{-1}\sigma_c(\g'_1\g_2',\g'_3)^{-1} = 1.
\end{align}

Note that $U_j$ is unitarily equivalent to $U_1$ on any proper interval, therefore,
$U_j(\g_1)^*U_j(\g'_1) = \sigma_c(\g_1^{-1},\g'_1)\sigma_c(\g_1,\g_1^{-1})U_j(\g_1^{-1}\g'_1)$,
and $\g_1^{-1}\g'_1 = \g_2\g_3\g_3^{\prime-1}\g_2^{\prime-1}$ has support in $I_2\cup I_3$.
Then we can again use the unitary equivalence between $U_j$ and $U_1$ on $I_2\cup I_3$
to obtain 
\[
 U_j(\g_3)^*U_j(\g_2)^*U_j(\g_1^{-1}\g'_1)U_j(\g'_2)U_j(\g'_3) = \sigma_c(\g_1,\g_2,\g_3,\g'_1,\g'_2,\g'_3),
\]
which is, by \eqref{eq:c6}, equivalent to the equality
\begin{align*}
 &U_j(\g_1)U_j(\g_2)U_j(\g_3)\sigma_c(\g_1,\g_2)^{-1}\sigma_c(\g_1\g_2,\g_3)^{-1} \\
 =\;& U_j(\g'_1)U_j(\g'_2)U_j(\g'_3)\sigma_c(\g'_1,\g'_2)^{-1}\sigma_c(\g'_1\g_2',\g'_3)^{-1}.
\end{align*}
In other words, $U_j$ is well-defined on $p^6(\hat \V)$.

\paragraph{Cocycle relations.}
Next we show that $U_j$ is a local multiplier representation on $\V$.
Let $\g,\g'\in \V = p^{11}(\hat \V)$ and we take decompositions $\g=\g_1\g_2\g_3, \g'=\g'_1\g'_2\g'_3$.
We first look at the product $\g_3\g'_1$. This is supported in $I_1\cup I_3$, and
we can find another decomposition $\g_3\g'_1 = \g''_1\g''_3$ using Lemma \ref{lm:fragmentation-local}, where $\g''_j \in \D^s(I_j) \cap p^{8}(\hat \V)$.
By repeating such operations and taking new decompositions in proper intervals, we find
\begin{align*}
 \g\g' &= \g_1\g_2\g_3\g'_1\g'_2\g'_3 \\
 &= \g_1\g_2\g''_1\g''_3\g'_2\g'_3 \\
 &= \g_1\g'''_1\g'''_2\g''''_2\g''''_3\g'_3,
\end{align*}
where $\g_j^{(k)} \in \D^s(I_j) \cap p^6(\hat \V)$.

Again, by considering the multiplier representation $U_1$, we can prove the following relations
\begin{align}\label{eq:defc4}
 \begin{array}{rl}
 U_1(\g_3)U_1(\g'_1) &= U_1(\g''_1)U_1(\g''_3)\sigma_c(\g_3,\g'_1,\g''_1,\g''_3),\\
 U_1(\g_2)U_1(\g''_1) &= U_1(\g'''_1)U_1(\g'''_2)\sigma_c(\g_2,\g''_1,\g'''_1,\g'''_2),\\
 U_1(\g''_3)U_1(\g'_2) &= U_1(\g''''_2)U_1(\g''''_3)\sigma_c(\g''_3,\g'_2,\g''''_2,\g''''_3),
 \end{array}
\end{align}
where $\sigma_c(\g_3,\g'_1,\g''_1,\g''_3),\sigma_c(\g_2,\g''_1,\g'''_1,\g'''_2),\sigma_c(\g''_3,\g'_2,\g''''_2,\g''''_3)\in\mathbb{C}$
are defined by these equalities. Therefore, as $U_1$ has the cocycle $\sigma_c$,
\begin{align*}
 &\sigma_c(\g,\g') U_1(\g\g')\\
 &\,=U_1(\g)U_1(\g')\\
 & \begin{array}{l}
  = \sigma_c(\g_1,\g_2)^{-1}\sigma_c(\g_1\g_2,\g_3)^{-1}\sigma_c(\g'_1,\g'_2)^{-1}\sigma_c(\g'_1\g'_2,\g'_3)^{-1}\\
  \quad\times U_1(\g_1)U_1(\g_2)U_1(\g_3)U_1(\g'_1)U_1(\g'_2)U_1(\g'_3)
   \end{array} &\text{ by } \eqref{eq:defpi}\\
 & \begin{array}{l}
 =\sigma_c(\g_1,\g_2)^{-1}\sigma_c(\g_1\g_2,\g_3)^{-1}\sigma_c(\g'_1,\g'_2)^{-1}\sigma_c(\g'_1\g'_2,\g'_3)^{-1}\\
 \quad\times U_1(\g_1)U_1(\g'''_1)U_1(\g'''_2)U_1(\g''''_2)U_1(\g''''_3)U_1(\g'_3)\\
 \quad\times \sigma_c(\g_3,\g'_1,\g''_1,\g''_3)\sigma_c(\g_2,\g''_1,\g'''_1,\g'''_2) \sigma_c(\g''_3,\g'_2,\g''''_2,\g''''_3)
   \end{array} & \text{ by } \eqref{eq:defc4} \\
 & \begin{array}{l}
  =\sigma_c(\g_1,\g_2)^{-1}\sigma_c(\g_1\g_2,\g_3)^{-1}\sigma_c(\g'_1,\g'_2)^{-1}\sigma_c(\g'_1\g'_2,\g'_3)^{-1}\\
 \quad\times \sigma_c(\g_3,\g'_1,\g''_1,\g''_3)\sigma_c(\g_2,\g''_1,\g'''_1,\g'''_2) \sigma_c(\g''_3,\g'_2,\g''''_2,\g''''_3)\\
 \quad\times \sigma_c(\g_1,\g'''_1)\sigma_c(\g'''_2\g''''_2)\sigma_c(\g''''_3\g'_3)\cdot U_1(\g_1\g'''_1)U_1(\g'''_2\g''''_2)U_1(\g''''_3\g'_3)   
   \end{array} \\
 & \begin{array}{l}
   =\sigma_c(\g_1,\g_2)^{-1}\sigma_c(\g_1\g_2,\g_3)^{-1}\sigma_c(\g'_1,\g'_2)^{-1}\sigma_c(\g'_1\g'_2,\g'_3)^{-1}\\
   \quad \times \sigma_c(\g_3,\g'_1,\g''_1,\g''_3)\sigma_c(\g_2,\g''_1,\g'''_1,\g'''_2) \sigma_c(\g''_3,\g'_2,\g''''_2,\g''''_3)\\
   \quad \times \sigma_c(\g_1,\g'''_1)\sigma_c(\g'''_2\g''''_2)\sigma_c(\g''''_3\g'_3)\cdot \sigma_c(\g_1\g'''_1,\g'''_2\g''''_2)\sigma_c(\g_1\g'''_1\g'''_2\g''''_2,\g''''_3\g'_3)U_1(\g\g') 
   \end{array} 
 \end{align*}
or equivalently, the following relation between scalars:
\begin{align}\label{eq:c246}
 \sigma_c(\g,\g') =&\;\sigma_c(\g_1,\g_2)^{-1}\sigma_c(\g_1\g_2,\g_3)^{-1}\sigma_c(\g'_1,\g'_2)^{-1}\sigma_c(\g'_1\g'_2,\g'_3)^{-1} \nonumber \\
 &\times \sigma_c(\g_3,\g'_1,\g''_1,\g''_3)\sigma_c(\g_2,\g''_1,\g'''_1,\g'''_2) \sigma_c(\g''_3,\g'_2,\g''''_2,\g''''_3)\\
 &\times \sigma_c(\g_1,\g'''_1)\sigma_c(\g'''_2\g''''_2)\sigma_c(\g''''_3\g'_3)\cdot \sigma_c(\g_1\g'''_1,\g'''_2\g''''_2)\sigma_c(\g_1\g'''_1\g'''_2\g''''_2,\g''''_3\g'_3). \nonumber
 \end{align}

Since $U_j$ is locally equivalent to $U_1$, the following also follows from \eqref{eq:defc4}:
\begin{align}\label{eq:c4-j}
 \begin{array}{rl}
 U_j(\g_3)U_j(\g'_1) &= U_j(\g''_1)U_j(\g''_3)\sigma_c(\g_3,\g'_1,\g''_1,\g''_3),\\
 U_j(\g_2)U_j(\g''_1) &= U_j(\g'''_1)U_j(\g'''_2)\sigma_c(\g_2,\g''_1,\g'''_1,\g'''_2),\\
 U_j(\g''_3)U_j(\g'_2) &= U_j(\g''''_2)U_j(\g''''_3)\sigma_c(\g''_3,\g'_2,\g''''_2,\g''''_3).
 \end{array}
\end{align}

Now, in order to show that $U_j$ is a local multipler representation with the cocycle $\sigma_c$, we only have to compute
\begin{align*}
 &U_j(\g)U_j(\g') \\
 & \begin{array}{l}
  = \sigma_c(\g_1,\g_2)^{-1}\sigma_c(\g_1\g_2,\g_3)^{-1}\sigma_c(\g'_1,\g'_2)^{-1}\sigma_c(\g'_1\g'_2,\g'_3)^{-1}\\
  \quad \times U_j(\g_1)U_j(\g_2)U_j(\g_3)U_j(\g'_1)U_j(\g'_2)U_j(\g'_3)
  \end{array} &\text{ by } \eqref{eq:defpi} \\
 & \begin{array}{l}
   = \sigma_c(\g_1,\g_2)^{-1}\sigma_c(\g_1\g_2,\g_3)^{-1}\sigma_c(\g'_1,\g'_2)^{-1}\sigma_c(\g'_1\g'_2,\g'_3)^{-1}\\
   \quad \times U_j(\g_1)U_j(\g'''_1)U_j(\g'''_2)U_j(\g''''_2)U_j(\g''''_3)U_j(\g'_3)\\
   \quad \times \sigma_c(\g_3,\g'_1,\g''_1,\g''_3)\sigma_c(\g_2,\g''_1,\g'''_1,\g'''_2) \sigma_c(\g''_3,\g'_2,\g''''_2,\g''''_3)
   \end{array} & \text{ by } \eqref{eq:c4-j} \\
 & \begin{array}{l}
   = \left(\sigma_c(\g_1,\g'''_1)\sigma_c(\g'''_2\g''''_2)\sigma_c(\g''''_3\g'_3) \sigma_c(\g_1\g'''_1,\g'''_2\g''''_2)\sigma_c(\g_1\g'''_1\g'''_2\g''''_2,\g''''_3\g'_3)\right)^{-1} \\
   \quad \times \sigma_c(\g,\g') U_j(\g_1)U_j(\g'''_1)U_j(\g'''_2)U_j(\g''''_2)U_j(\g''''_3)U_j(\g'_3)
   \end{array} & \text{ by } \eqref{eq:c246} \\
 & \begin{array}{l}
  = \left( \sigma_c(\g_1\g'''_1,\g'''_2\g''''_2) \sigma_c(\g_1\g'''_1\g'''_2\g''''_2,\g''''_3\g'_3)\right)^{-1} \\
   \quad \times \sigma_c(\g,\g') U_j(\g_1\g'''_1)U_j(\g'''_2\g''''_2)U_j(\g''''_3\g'_3)
   \end{array} \\
 &=\,\sigma_c(\g,\g')U_j(\g\g'),
\end{align*}
where we used local equivalence between $U_j$ and $U_1$ in the 2nd and 4th equalities,
and the well-definedness (independence of the partition of a group element
into $\D^s(I_k)\cap p^5(\hat \V)$) in the 5th equality. Namely, $U_j$ has the cocycle $\sigma_c$ on $\V = p^{11}(\hat \V)$.
\end{proof}

\paragraph{Direct sum of multiplier representations.}
Since all the projective representations $U_j$ can be made into the local multiplier representations with the same cocycle $\sigma_c$,
the direct sum $U := \bigoplus_j  U_j$ is again a local multiplier representation of $\widetilde{\D^s(S^1)}$ on $\V$.
By forgetting the phase, we can interpret $U$ as a local projective representation of $\V \subset \widetilde{\D^s(S^1)}$,
or in other words, a continuous local group homomorphism from $\V$ into $\U(\H)/\mathbb{T}$ (see Section \ref{projective}),
where $\H = \bigoplus_j \H(c,h_j)$.
As $\widetilde{\D^s(S^1)}$ is simply connected and locally connected,
$U$ extends to a continuous projective representation of $\widetilde{\D^s(S^1)}$ \cite[Theorem 63]{Pontryagin46}.

\begin{theorem}\label{th:sumdiff}
 Let $s>3$. For a family $\{(c, h_j)\}$ of pairs with the same central charge $c$
 such that $h_j - h_{j'} \in \NN$,
 the above defined direct sum projective representation $U$ of $\widetilde{\D^s(S^1)}$ satisfies $U(R(2\pi)) \in \CC$,
 where $R(\cdot)$ is the lift of rotations to $\widetilde{\D^s(S^1)}$,
 or in other words, $U$ gives a projective representation of $\D^s(S^1)$.
\end{theorem}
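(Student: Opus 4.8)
The plan is to reduce the statement to computing the infinitesimal generator of the lifted rotation one-parameter group on each irreducible block, and then to show that this generator equals $L_0^{(j)}$ up to a constant that does \emph{not} depend on $j$. First I would restrict the projective representation $U$ to the rotation subgroup $\{R(\theta):\theta\in\RR\}\subset\widetilde{\D^s(S^1)}$. Since the continuous second cohomology of $\RR$ is trivial, after adjusting by a common phase the continuous projective one-parameter group $\theta\mapsto U(R(\theta))$ lifts to a genuine strongly continuous one-parameter unitary group $e^{i\theta A}$. Because each $R(\theta)$ preserves every summand $\H(c,h_j)$, the generator decomposes as $A=\bigoplus_j A_j$. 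The relation $q(U_{(c,h_j)}(\Exp(\theta\cdot 1)))=q(e^{i\theta T_{(c,h_j)}(1)})=q(e^{i\theta L_0^{(j)}})$ (recall $\Exp(\theta\cdot 1)=R(\theta)$ and $T_{(c,h_j)}(1)=L_0^{(j)}$) forces $A_j=L_0^{(j)}+\kappa_j$ for some real constant $\kappa_j$, so that $U(R(2\pi))=e^{2\pi i A}=\bigoplus_j e^{2\pi i(h_j+\kappa_j)}\mathbf 1_{\H(c,h_j)}$. As $e^{2\pi i L_0^{(j)}}=e^{2\pi i h_j}\mathbf 1$ and $h_j-h_{j'}\in\ZZ$ by hypothesis, the univalences $e^{2\pi i h_j}$ already coincide; hence it remains only to prove that the $\kappa_j$ can be taken independent of $j$.

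The heart of the argument is this last point, which I would establish by differentiating the fragmented expression for $U_j$ at the identity. For $\theta$ small enough that $R(\theta)\in\V$, Proposition \ref{pr:directsum} (via \eqref{eq:defpi}) gives $U_j(R(\theta))=U_j(\g_1)U_j(\g_2)U_j(\g_3)\,c(\g_1,\g_2)^{-1}c(\g_1\g_2,\g_3)^{-1}$ with $\g_k=\chi_k(R(\theta))\in\D^s(I_k)$ and $U_j(\g_k)=\Ad V_{j,k}(U_1(\g_k))$, where crucially the fragments $\g_k$ and the cocycle $c$ do not depend on $j$. Normalizing $U_j(\iota)=\mathbf 1$ and applying the product rule at $\theta=0$, the generator of $\theta\mapsto U_j(R(\theta))$ is the sum of the generators of the three factors plus a scalar coming from the $j$-independent cocycle factors. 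Writing $v_k:=\frac{d}{d\theta}\chi_k(R(\theta))\big|_{\theta=0}$, each $v_k$ is a smooth vector field with $\supp v_k\subset I_k$, and the generator of $U_1(\g_k(\cdot))$ at the identity has the form $iT_{(c,h_1)}(v_k)+i a_k$ with $a_k\in\RR$.

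Now I would invoke Lemma \ref{lm:localequivalence}: since $U_j(\g_k)=V_{j,k}U_1(\g_k)V_{j,k}^*$ and $V_{j,k}$ intertwines $T_{(c,h_1)}(f)$ with $T_{(c,h_j)}(f)$ for every $f$ supported in $I_k$, conjugation sends $T_{(c,h_1)}(v_k)$ to $T_{(c,h_j)}(v_k)$ while leaving the scalar $a_k$ untouched. Differentiating the fragmentation identity $R(\theta)=\chi_1(R(\theta))\chi_2(R(\theta))\chi_3(R(\theta))$ of Lemma \ref{lm:fragmentation} at $\theta=0$ yields $\sum_k v_k=1$, the constant field generating rotations, so by linearity $\sum_k T_{(c,h_j)}(v_k)=T_{(c,h_j)}(1)=L_0^{(j)}$. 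Collecting terms, $A_j=L_0^{(j)}+\kappa$ with $\kappa=\sum_k a_k+(\text{cocycle derivative})$ manifestly independent of $j$. Thus $\kappa_j=\kappa$ for all $j$, and $U(R(2\pi))=e^{2\pi i\kappa}\bigoplus_j e^{2\pi i h_j}\mathbf 1=e^{2\pi i(\kappa+h_1)}\mathbf 1\in\CC$, so $U$ descends to $\D^s(S^1)$; note this also disposes of any coincidences among the $h_j$, since the conclusion holds block by block.

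The main obstacle I anticipate is the analytic justification of the differentiation underlying the third paragraph: one must check that $\theta\mapsto\chi_k(R(\theta))$ is differentiable into $\vect(S^1)$ (which should follow from the explicit bump-function formulas defining $\chi_k$ in Lemma \ref{lm:fragmentation}), that $U_1(\g_k(\cdot))$ is differentiable on the finite-energy domain with generator $iT_{(c,h_1)}(v_k)$ up to a scalar, and that the product rule for strong generators genuinely applies at the identity. Everything downstream is then forced: the only $j$-dependence of the rotation generator sits in $L_0^{(j)}$, and the integrality $h_j-h_{j'}\in\ZZ$ makes the conformal-spin phases $e^{2\pi i h_j}$ agree, yielding the scalarity of $U(R(2\pi))$.
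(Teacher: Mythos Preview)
Your approach is sound in outline but takes a genuinely different route from the paper, and the analytic obstacle you flag at the end is real rather than cosmetic.

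The paper avoids differentiation entirely. It first restricts $U$ to $\widetilde{\diff(S^1)}$ and observes that, as a projective representation, it must coincide with $\bigoplus_j \tilde U_{(c,h_j)}$ (they agree on a neighborhood of $\iota$ by construction, hence globally by simple connectedness). It then restricts further to $\widetilde{\mathrm{PSL}(2,\RR)}$: since this is a simple Lie group, Bargmann's theorem lifts the projective representation to a true one, and because $\widetilde{\mathrm{PSL}(2,\RR)}$ is perfect, the lift is \emph{unique}. Uniqueness forces the lift to equal $\bigoplus_j V_{(c,h_j)}$, where each $V_{(c,h_j)}$ is the standard true representation with $V_{(c,h_j)}(R(2\pi)) = e^{2\pi i h_j}\mathbf 1$. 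The integrality hypothesis then gives scalarity immediately, with no generators computed.

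Your route---computing the rotation generator on each block via the fragmentation formula and showing that the scalar correction is $j$-independent because the intertwiners $V_{j,k}$ carry $T_{(c,h_1)}(v_k)$ to $T_{(c,h_j)}(v_k)$ while fixing scalars---is a legitimate alternative that stays entirely within the rotation subgroup and never invokes $\mathrm{PSL}(2,\RR)$. Its cost is exactly the differentiability you identify: Lemma~\ref{lm:fragmentation} only asserts \emph{continuity} of the $\chi_k$, and the Bargmann lift $U_1$ on $\hat\V$ is likewise only known to be continuous, so neither $\theta\mapsto\chi_k(R(\theta))$ nor $\theta\mapsto U_1(\chi_k(R(\theta)))$ is a priori differentiable in the required sense. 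The first is plausibly recoverable from the explicit integral formula \eqref{eq:gamma1}; the second is more delicate. One could sidestep both by choosing a smooth partition $1=\sum_k v_k$ with $\supp v_k\subset I_k$ by hand and comparing $U_j(R(\theta))$ with $\prod_k V_{j,k}e^{i\theta T_{(c,h_1)}(v_k)}V_{j,k}^*$ directly via the well-definedness clause of Proposition~\ref{pr:directsum}, but as written the proposal leans on differentiability facts the paper does not establish. The paper's $\widetilde{\mathrm{PSL}(2,\RR)}$ argument buys you exactly the freedom from these analytic checks.
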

\begin{proof}
 
  Let $\tilde{U}_{(c,h_j)}$ the irreducible global multiplier representation of $\widetilde{\diff(S^1)}$ with central charge $c$ and lowest weight $h_j$ associated to the Bott-Virasoro cocycle. As a projective representation,
  we have $U\big\vert_{\widetilde{\diff(S^1)}}=\bigoplus_j \tilde{U}_{(c,h_j)}$:
  this is because, by definition of $U$, they agree on a neighborhood of the identity of $\widetilde{\diff(S^1)}$, and since $\widetilde{\diff(S^1)}$ is simply connected they agree globally. Since $\widetilde{\mathrm{PSL}(2, \RR)}$ is a simply connected and simple Lie group, $U\big\vert_{\widetilde{\mathrm{PSL}(2,\RR)}}$
 extends to a true representation of $\widetilde{\mathrm{PSL}(2, \RR)}$
 by changing $U(\g)$ only by a scalar \cite[Theorem 7.1]{Bargmann54}. The lift to a true representation of $\widetilde{\mathrm{PSL}(2,\RR)}$ is unique, since if $V_1$ and $V_2$ are true representations which give rise to the same projective representation, we have that $V_1(g)=\chi(g)V_2(g)$ for all $g\in \widetilde{\mathrm{PSL}(2,\RR)}$, where $\chi$ is a character. Since $\widetilde{\mathrm{PSL}(2,\RR)}$ is a perfect group, $\chi(g)=1$ for all $g$. By the uniqueness of the lift of $U\big\vert_{\widetilde{\mathrm{PSL}(2,\RR)}}$ to a true representation $V$, we have that $V=\bigoplus_j V_{(c,h_j)}$, where $V_{(c,h_j)}$ is the lift of $\tilde{U}_{(c,h_j)}\big\vert_{\widetilde{\mathrm{PSL}(2, \RR)}}$ to a true representation. As we assumed that $h_j - h_{j'}$ are integers, $V(R(2\pi)) \in \CC$.

\end{proof}

From the previous theorem, it follows that every positive
energy projective unitary representation of $\diff(S^1)$ extends to a unitary projective representation of $\D^s(S^1)$ using the following  well-known fact that we here prove for self-containment.
\begin{proposition}\label{pr:compred}
Let $U$ be a positive energy unitary projective representation of $\diff(S^1)$ on the Hilbert space $\H$. Then $U$ is unitarily equivalent to a direct sum of irreducible positive energy unitary projective representation of $\diff(S^1)$
and extends to $\D^s(S^1)$, $s>3$.
\end{proposition}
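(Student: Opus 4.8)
The plan is to decompose $U$ into irreducible summands and then apply Theorem~\ref{th:sumdiff}; the two facts to extract are that all summands share a common central charge and that their lowest weights pairwise differ by integers. First I would realize $U$ as a positive energy representation of $\vir$: since $U$ is strongly continuous and of positive energy, the generator of the rotation subgroup is a positive self-adjoint conformal Hamiltonian $L_0$, and differentiating $U$ along $\Exp(tf)$ produces the stress-energy operators $T(f)$ obeying the Virasoro relations on the finite-energy vectors, with the central element represented by an operator commuting with every $T(f)$. This is the standard correspondence between positive energy projective representations of $\diff(S^1)$ and positive energy representations of $\vir$ recalled in the introduction.

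Because $U$ is projective and $R(2\pi) = \iota$ in $\diff(S^1)$, the operator $U(R(2\pi)) = e^{2\pi i L_0}$ is a scalar, so $\sp L_0 \subset h_0 + \ZZ$ for some $h_0 \ge 0$; in particular $L_0$ has pure point spectrum bounded below. Any nonzero vector in the minimal $L_0$-eigenspace is annihilated by all $L_n$ with $n>0$ and hence is a lowest weight vector; the cyclic lowest weight submodule it generates carries the positive definite inner product induced from $\H$, so it has no null vectors and is the irreducible module $\H(c,h)$ for the appropriate pair. By unitarity the orthogonal complement of any invariant subspace is again invariant, so a maximal orthogonal family of such irreducible submodules exhausts $\H$ (otherwise its complement, being positive energy, would contain a further lowest weight vector). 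This yields $U \cong \bigoplus_j U_{(c_j,h_j)}$.

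To match Theorem~\ref{th:sumdiff}, fix lifts so that $\omega_U(\g_1,\g_2) := U(\g_1)U(\g_2)U(\g_1\g_2)^{-1}$ is a single scalar-valued cocycle; it restricts to the defining cocycle of each summand. The central term in the Virasoro relations is the infinitesimal form of this common cocycle at the identity and is therefore the same for every summand, whence all central charges $c_j$ coincide with a single value $c$. Likewise $\sp L_0 \subset h_0 + \ZZ$ forces $h_j - h_{j'} \in \ZZ$ for all $j,j'$. The pullback of $U$ to $\widetilde{\diff(S^1)}$ is then $\bigoplus_j \tilde U_{(c,h_j)}$, which is exactly the representation whose extension to $\widetilde{\D^s(S^1)}$ is built in Proposition~\ref{pr:directsum}; by Theorem~\ref{th:sumdiff} this extension satisfies $U(R(2\pi)) \in \CC$ and hence descends to a projective representation of $\D^s(S^1)$ that restricts to the original $U$ on $\diff(S^1) \subset \D^s(S^1)$.

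I expect the main obstacle to be the first step, namely making rigorous the passage from the projective group representation to a positive energy representation of $\vir$ on a common invariant core of finite-energy vectors, with the correct domains, essential self-adjointness and commutation relations. Once this is in place, complete reducibility is the standard Zorn's lemma argument, while the constancy of the central charge and the integrality of the weight differences follow immediately from the single scalar cocycle and from $R(2\pi) = \iota$, respectively, so that Theorem~\ref{th:sumdiff} applies directly.
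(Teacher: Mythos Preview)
Your strategy is essentially the paper's: pass from $U$ to a positive energy $\vir$-representation on the finite-energy vectors, decompose that into irreducibles, observe that the spectrum condition on $L_0$ forces integer weight differences, and then invoke Theorem~\ref{th:sumdiff}. For the step you correctly identify as the main obstacle, the paper simply cites \cite[Theorem 3.4]{CKLW18}.

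There is one point you gloss over that the paper treats explicitly. After decomposing $\H^\fin$ into irreducible $\vir$-submodules $H^n_j$, you immediately write $U \cong \bigoplus_j U_{(c_j,h_j)}$; but this is so far only a decomposition of the Lie algebra representation, and you still need to check that the group operators $e^{iT(f)}$ preserve each closure $\overline{H^n_j}$. The paper does this by noting that $T(f)|_{H^n_j}$ is essentially self-adjoint on $H^n_j$, that analytic vectors for its closure are also analytic for $T(f)$ on $\H$, and hence that $e^{iT(f)}|_{\overline{H^n_j}} = e^{i\,\overline{T(f)|_{H^n_j}}}$. Without this step the conclusion that the \emph{group} representation splits is not justified.
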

\begin{proof}
As in the proof of Theorem \ref{th:sumdiff}, we have that $U\big\rvert_{\mathrm{PSL}(2,\RR)}$ can be lifted to
a true representation of $\widetilde{\mathrm{PSL}(2, \RR)}$. Thus we can take the generator of rotations $L_0$ and,
since $e^{i2\pi L_0}\in\CC \1$ from the fact that $U$ is a projective representation of $\diff(S^1)$,
it follows that $L_0$ is diagonalizable with spectrum Sp$(L_0)\subset\{h_1+\mathbb{N}\}$ with $h_1\in\mathbb{R}$,
$h_1\geq 0$. Let $\H^\fin$ be the dense subspace of $\H$ generated by the eigenvectors of $L_0$.
We can apply \cite[Theorem 3.4]{CKLW18} to conclude that there exists a positive energy unitary representation
$\pi_U$ of $\vir$ on $\H^\fin$.

The representation of $\vir$ on $\H^\fin$ is equivalent to an algebraic orthogonal
direct sum of multiples of irreducible positive energy representations of $\vir$ in the following sense.
Let $V_1$ be the smallest $\pi_U$-invariant subspace of $\H^\fin$ which contains $\ker(L_0-h_1\1_{\H^\fin})$
where $h_1$ is the smallest eigenvalue of $L_0$. By induction let $V_n$ be the smallest $\pi_U$-invariant subspace
of $\left(V_1\oplus V_2\oplus\cdots\oplus V_{n-1}\right)^{\perp}\cap\H^\fin$
which contains $\left(V_1\oplus V_2\oplus\cdots\oplus V_{n-1}\right)^{\perp}\cap\ker(L_0-h_n\1_{\H^\fin})$
where $h_n$ is the smallest eigenvalue of $L_0$ restricted to $\left(V_1\oplus V_2\oplus\cdots\oplus V_{n-1}\right)^{\perp}\cap\H^\fin$.
It is straightforward to see that $\H^\fin=\bigoplus_n V_n$ in the algebraic sense.
Now choose an orthonormal basis $\lbrace e^n_j\rbrace$ of $W_n\coloneqq V_n\cap\ker(L_0-h_n\1_{\H^\fin})$.
We define $H_j^n$ to be the smallest $\pi_U$-invariant subspace of $W_n$ which contains the vector $e^n_j$.
By construction $H_j^n$ has no proper $\pi_U$-invariant subspaces, $H_j^n$ and $H_k^n$ are orthogonal subspaces for $j\ne k$
and $\overline{V_n}=\bigoplus_j\overline{H^n_j}$. Let $T$ be the stress-energy tensor associated to the representation $\pi_U$ of $\vir$.
By construction $T(f)|_{H_j^n}$ is essentially self-adjoint on $H^n_j$.

To conclude the decomposition of $U$, we have to show that $e^{iT(f)}\overline{H^n_j}\subset\overline{H^n_j}$ for all $f\in\vect(S^1)$.
We note that $\dom\left(\left(\overline{(T(f)|_{H_j^n})}\right)^\ell\right)\subset \dom(T(f)^\ell)$ and
if $\xi\in\dom\left(\left(\overline{(T(f)|_{H_j^n})}\right)^\ell\right)$ then $\left(\overline{T(f)|_{H_j^n}}\right)^\ell\xi=(T(f))^\ell\xi$.
Thus the analytic vectors for $\overline{(T(f)|_{H_j^n})}$ are also analytic for $T(f)$ and  $e^{i\overline{(T(f)|_{H_j^n})}}\xi=e^{iT(f)}\xi$.
Using the density of the analytic vectors in $\overline{H_j^n}$, we obtain 
that $e^{i\overline{(T(f)|_{H_j^n})}}=e^{iT(f)}\big\vert_{H^n_j}$.
Irreducibility of $U\vert_{\overline{H^n_j}}$ follows because $T\vert_{H^n_j}$ is irreducible.

The extension to $\D^s(S^1)$ is now a mere corollary of Theorem \ref{th:sumdiff}.
\end{proof}

\begin{corollary}\label{cr:diff4red}
Let $U$ be a positive energy unitary projective representation of $\diff(S^1)$ on the Hilbert space $\H$.
Then $U$ is unitarily equivalent to a direct sum of irreducible positive energy unitary projective representation of $\diff(S^1)$
and extends to $\diff^k(S^1)$ with $k\geq 4$.
\end{corollary}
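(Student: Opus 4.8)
The plan is to combine the reducible extension result of Proposition \ref{pr:compred} with the same continuous Sobolev embedding that was exploited in the irreducible case in Corollary \ref{cr:diff4}.

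First I would invoke Proposition \ref{pr:compred} directly. It already asserts that $U$ is unitarily equivalent to a direct sum of irreducible positive energy unitary projective representations of $\diff(S^1)$ and that this direct sum extends to a strongly continuous projective representation of $\D^s(S^1)$ for every $s>3$. This immediately settles the decomposition claim and reduces the corollary to producing an extension to $\diff^k(S^1)$ for $k\geq 4$.

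Next, since $k\geq 4$, I can fix a real number $s$ with $3<s\le k$ (for instance $s=4$ itself when $k=4$). By the continuous embedding $\diff^k(S^1)\hookrightarrow \D^s(S^1)$, valid for $s\le k$ and already recalled in the proof of Corollary \ref{cr:diff4}, I would restrict the $\D^s(S^1)$-extension obtained above along this inclusion. The resulting map is a strongly continuous projective representation of $\diff^k(S^1)$, since it is the composition of a continuous embedding with a strongly continuous projective representation, and it restricts to $U$ on $\diff(S^1)\subset\diff^k(S^1)$. This is exactly the desired extension.

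There is no genuine obstacle at this stage: the entire analytic content---the decomposition into irreducibles, the construction of the extended action through the transformed stress-energy tensors $T^{\gamma}$, and the continuity in the strong operator topology---has already been carried out in Proposition \ref{pr:compred} and the results preceding it. The present corollary is merely the specialization of that statement along the continuous inclusion $\diff^k(S^1)\hookrightarrow \D^s(S^1)$, in precise parallel with the way Corollary \ref{cr:diff4} specializes the irreducible extension of Corollary \ref{cr:continuityB(H)}.
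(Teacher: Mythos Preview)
Your proposal is correct and matches the paper's own proof essentially verbatim: the paper also deduces the corollary directly from Proposition~\ref{pr:compred} together with the continuous embedding $\diff^k(S^1)\hookrightarrow \D^s(S^1)$ for $s\le k$, in exact parallel with Corollary~\ref{cr:diff4}.
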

\begin{proof}
 This again follows from Proposition \ref{pr:compred} and the continuous embedding
 $\diff^k(S^1) \hookrightarrow \D^s(S^1), s \le k$.
\end{proof}

We do not know whether our local multiplier representations can be extended to a global multiplier
representation of $\widetilde{\D^s(S^1)}$. It is also open whether the global multiplier representation
of $\diff(S^1)$ with the Bott-Virasoro cocycle \cite[Proposition 5.1]{FH05} extends to $\widetilde{\D^s(S^1)}$
by continuity.

\section{Conformal nets and diffeomorphism covariance}\label{conformal}
Let $\mathrm{PSL}(2,\mathbb{R})$ be the M\"obius group and $\mathcal{I}$ be the set of nonempty, non-dense, open intervals of the unit circle $S^{1}$.
$I'$ denotes the interior of the complement of the interval $I\in\mathcal{I}$, namely $I'=(S^{1}\setminus I)^{\circ}$.
A {\bf M\"obius covariant net} $(\A, U, \Omega)$ on $S^{1}$ is a triple of a family 
$\mathcal{A}=\left\{\mathcal{A}(I), I\in\mathcal{I}\right\}$
of von Neumann algebras,
a strongly continuous unitary representation $U$ of $\mathrm{PSL}(2,\mathbb{R})$
acting on a separable complex Hilbert space $\mathcal{H}$ and $\Omega \in \H$,
satisfying the following properties:
\begin{enumerate}[{(}1{)}]
\item Isotony: $\mathcal{A}(I_{1})\subset\mathcal{A}(I_{2})$, if $I_{1}\subset I_{2}$, $I_{1},I_{2}\in \mathcal{I}$.
\item Locality: $\mathcal{A}(I_{1})\subset\mathcal{A}(I_{2})^{\prime}$, if $I_{1}\cap I_{2}=\emptyset$, $I_{1},I_{2}\in \mathcal{I}$.
\item M\"obius covariance: for $g\in \mathrm{PSL}(2,\mathbb{R})$, $I\in\mathcal{I}$,
\begin{equation*}
U(g)\mathcal{A}(I)U(g)^{-1}=\mathcal{A}(gI)
\end{equation*}
where $\mathrm{PSL}(2,\mathbb{R})$ acts on $S^{1}$ by M\"obius transformations.
\item Positivity of energy: the representation $U$ has positive energy, i.e. the conformal Hamiltonian $L_{0}$ (the generator of rotations) has non-negative spectrum.
\item Vacuum vector: $\Omega$ is the unique vector (up to a scalar) with the property
$U(g)\Omega=\Omega$ for $g\in \mathrm{PSL}(2,\mathbb{R})$.
Additionally $\Omega$ is cyclic for the algebra $\bigvee_{I\in\mathcal{I}}\mathcal{A}(I)$.
\setcounter{NET}{\value{enumi}}
\end{enumerate}
With these assumptions, the following automatically hold \cite[Theorem 2.19(ii)]{GF93}\cite[Section 3]{FJ96}
\begin{enumerate}[{(}1{)}]
\setcounter{enumi}{\value{NET}}
\item Reeh-Schlieder property: $\Omega$ is cyclic and separating for $\A(I)$.
\item Haag duality: for every $I\in\mathcal{I}$, $\mathcal{A}(I')=\mathcal{A}(I)'$ where $\mathcal{A}(I)'$ is the commutant of $\mathcal{A}(I)$.
\item Additivity: if $\lbrace I_{\alpha}\rbrace_{\alpha\in A}$ is a covering of $I\in\mathcal{I}$, with $I_{\alpha}\in\mathcal{I}$ for every $\alpha$, then ${\mathcal{A}(I)\subset \bigvee_{\alpha}\mathcal{A}(I_{\alpha})}$.
\item Semicontinuity: if $I_n\in\mathcal{I}$ is a decreasing family of intervals and $I=\left(\bigcap_n I_n\right)^{\circ}$ then \\$\mathcal{A}(I)=\bigwedge_n \mathcal{A}(I_n)$.
\setcounter{NET}{\value{enumi}}
\end{enumerate}

By a \textbf{conformal net} (or diffeomorphism covariant net) we shall mean a M\"obius covariant net which satisfies the following:
\begin{enumerate}[{(}1{)}]
\setcounter{enumi}{\value{NET}}
\item \label{diffcov} The representation $U$ extends to a projective unitary representation of $\diff(S^1)$
such that for all $I\in\mathcal{I}$ we have
\begin{align*}
U(\gamma)\mathcal{A}(I)U(\gamma)^*&=\mathcal{A}(\gamma I), \quad\gamma\in\diff(S^1),\\
U(\gamma)xU(\gamma)^*&=x,\hspace{3mm}x\in \mathcal{A}(I), \quad\gamma\in\diff(I^\prime)
\end{align*}
where $\diff(I^\prime)$ denotes the subgroup of diffeomorphisms $\gamma$ such that $\gamma(z)=z$ for all $z\in I$.
\end{enumerate}

A positive energy representation $U$ of $\diff(S^1)$ is equivalent to a direct sum of irreducible representations, see Proposition \ref{pr:compred}.
Every irreducible component $U_j$ in the decomposition has the same value of the central charge $c$ and if $h_j$ is the lowest weight of $U_j$,
$h_j-h_k\in\mathbb{Z}$ for every $j,k$. This fact is crucial for our purpose, which is to extend the conformal symmetry of the net
to the larger group $\D^s(S^1)$, $s>3$, in the sense that we want to show that the conditions in (\ref{diffcov}) are satisfied for arbitrary
$\gamma$ in $\D^s(S^1)$ and $\D^s(I')$ respectively.

\begin{proposition}
A conformal net $(\mathcal{A},U,\Omega)$ is $\D^s(S^1)$-covariant for every $s>3$.
\end{proposition}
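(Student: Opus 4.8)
The plan is to use the strongly continuous projective unitary extension of $U$ to $\D^s(S^1)$ furnished by Proposition \ref{pr:compred} (which applies because, as recalled above, the irreducible components of $U$ all share the same central charge $c$ and have lowest weights differing by integers), and then to verify the two covariance conditions in (\ref{diffcov}) for arbitrary $\gamma \in \D^s(S^1)$ by smooth approximation. Throughout I would work with $\Ad U(\gamma)$, a well-defined normal $*$-automorphism of $\B(\H)$ that is insensitive to the phase ambiguity of $U(\gamma)$; the continuity of the projective representation (Corollary \ref{cr:continuityB(H)} and the discussion in Section \ref{projective}) guarantees that $\gamma_n \to \gamma$ in $\D^s(S^1)$ implies $\Ad U(\gamma_n)(x) \to \Ad U(\gamma)(x)$ in the strong operator topology for every $x \in \B(\H)$, and I would use this repeatedly to pass covariance relations to the limit. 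Note also that, since any $\gamma \in \D^s(S^1)$ is an orientation-preserving homeomorphism, $\gamma I$ is again a proper open interval.

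For the algebraic covariance $U(\gamma)\A(I)U(\gamma)^* = \A(\gamma I)$, it suffices to establish the single inclusion $\Ad U(\gamma)(\A(I)) \subseteq \A(\gamma I)$ for all $\gamma$ and all $I$: applying it to $\gamma^{-1}$ in place of $\gamma$ and to $\gamma I$ in place of $I$ (using that $\D^s(S^1)$ is a group by Lemma \ref{lm:sobolevgroup} and $\gamma^{-1}(\gamma I) = I$) yields the reverse inclusion. To prove the inclusion I would first reduce by additivity, writing $\A(I) = \bigvee_\alpha \A(I_\alpha)$ with $\overline{I_\alpha} \subset I$. Fixing such an $I_\alpha$ and $x \in \A(I_\alpha)$, I take smooth $\gamma_n \to \gamma$ in $\D^s(S^1)$, so that $U(\gamma_n) x U(\gamma_n)^* \in \A(\gamma_n I_\alpha)$ by the net's smooth covariance. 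Since $s>3$ forces the Sobolev embedding $\D^s(S^1) \hookrightarrow \diff^2(S^1)$ and hence uniform convergence $\gamma_n \to \gamma$, while $\overline{\gamma I_\alpha} \subset \gamma I$, for $n$ large the arc $\gamma_n I_\alpha$ lies inside $\gamma I$, whence $U(\gamma_n) x U(\gamma_n)^* \in \A(\gamma I)$ by isotony. Letting $n \to \infty$ and using that $\A(\gamma I)$ is closed in the strong operator topology gives $\Ad U(\gamma)(x) \in \A(\gamma I)$; as $\Ad U(\gamma)$ is normal, taking the join over $\alpha$ yields $\Ad U(\gamma)(\A(I)) \subseteq \A(\gamma I)$.

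For the commutation condition $U(\gamma) x U(\gamma)^* = x$ with $\gamma \in \D^s(I')$ and $x \in \A(I)$, I would again reduce by additivity to $x \in \A(I_\alpha)$, $\overline{I_\alpha} \subset I$. The geometric point is that $\supp \gamma \subset \overline{I'}$ and $\overline{I_\alpha}$ are disjoint compact sets, so I may fix a slightly larger interval $\hat I' \supset \overline{I'}$ with $\hat I' \cap \overline{I_\alpha} = \emptyset$ and approximate $\gamma$ by smooth $\gamma_n$ supported in $\hat I'$ via the local approximation of Lemma \ref{lem:localapprox}. Each such $\gamma_n$ fixes $I_\alpha$ pointwise, so $U(\gamma_n)$ commutes with $\A(I_\alpha)$ by the net's diffeomorphism covariance; passing to the strong limit gives $\Ad U(\gamma)(x) = x$, and joining over $\alpha$ yields commutation with all of $\A(I)$.

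The main obstacle is that the target interval $\gamma I$ itself varies with the approximation, so $\A(\gamma_n I)$ and $\A(\gamma I)$ cannot be compared directly; the device resolving this is to leave room by passing to relatively compact subintervals $\overline{I_\alpha} \subset I$ (legitimate by additivity) and then exploiting isotony together with the uniform convergence supplied by the embedding $\D^s(S^1) \hookrightarrow \diff^2(S^1)$ for $s>3$. The same shrinking is precisely what allows me to choose smooth approximants that fix a whole subinterval in the commutation argument, since a diffeomorphism supported in $I'$ can in general only be approximated by smooth ones supported in a slightly larger interval.
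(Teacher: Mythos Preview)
Your proof is correct and follows the same overall strategy as the paper: approximate $\gamma\in\D^s(S^1)$ by smooth diffeomorphisms using Lemma~\ref{lem:localapprox}, and pass the known covariance relations to the limit via the strong continuity of the extended projective representation.

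The one substantive difference is in how you handle the ``moving target'' $\A(\gamma_n I)$. The paper deals with this from the outside: it keeps the source interval $I$ fixed, lands $U(\gamma_n)xU(\gamma_n)^*$ in $\A\big(\bigcup_{k\ge m}\gamma_k I\big)$ by isotony and additivity, and then invokes upper semicontinuity (property~(9)) to intersect down to $\A(\gamma I)$. You instead shrink from the inside first, writing $\A(I)=\bigvee_\alpha\A(I_\alpha)$ with $\overline{I_\alpha}\subset I$ by additivity, and use uniform convergence (from the Sobolev embedding) together with $\overline{\gamma I_\alpha}\subset\gamma I$ to place $\gamma_n I_\alpha$ directly inside the fixed target $\gamma I$ for large $n$; this avoids semicontinuity altogether. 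The same ``shrink first'' device lets you run the commutation argument with a single enlarged support interval $\hat I'$, whereas the paper uses the full decreasing sequence $I_n'\searrow I'$ from Lemma~\ref{lem:localapprox} and applies additivity on the complementary side. Both routes are clean; yours uses one fewer structural property of the net.
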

\begin{proof}
Let $\lbrace\gamma_n\rbrace$ be a sequence of diffeomorphisms in $\diff(S^1)$ converging to $\gamma\in\D^s(S^1)$ in the topology of $\D^s(S^1)$
as in Lemma \ref{lem:localapprox}.
For all $n\in\mathbb{N}$ it holds that
\[
U(\gamma_n)\A(I)U(\gamma_n)^*=\A(\gamma_n I) \subset \A(\textstyle{\bigcup_{k=m}^n\gamma_k I}),
\]
where we used isotony of the net $\A$. For $x \in \A(I)$, it follows for $m \le n$ that
\[
U(\gamma_n)xU(\gamma_n)^*\in \A(\textstyle{\bigcup_{k=m}^n\gamma_k I}) = \bigvee_{k=m}^{\infty}\A(\gamma_k I),
\]
by additivity.
By Proposition \ref{cr:continuityB(H)}
it follows that $U(\gamma)xU(\gamma)^*=\lim_{n\rightarrow \infty} U(\gamma_n)xU(\gamma_n)^*$
(convergence in the strong operator topology) is in
$\bigcup_{k=m}^{\infty}\A(\gamma_k\cdot I)$ for any $m$, hence we have by upper semicontinuity that
\[
U(\gamma)\A(I)U(\gamma)^*\subset \bigcap_m\A(\textstyle{\bigcup_{k=m}^{\infty}\gamma_k I})= \A(\gamma I).
\]
The other inclusion follows by applying $\Ad U(\gamma^{-1})$.

Now consider $\gamma\in\D^s(I')$ and $x\in\A(I)$. We know from Lemma \ref{lem:localapprox}
that there exists a sequence $\lbrace \gamma_n\rbrace\subset \diff(I_n')$ converging to $\gamma$ in the topology of $\D^s(S^1)$
and a
decreasing sequence of intervals $I'_n\supset\supp(\gamma_n)\supset I'$ such that $\bigcap_n I'_n= I'$.
For $x\in\A(I_n)$,
$U(\gamma_m)xU(\gamma_m)^*=x$ if $m \ge n$, hence
by Proposition \ref{cr:continuityB(H)} we obtain
$U(\gamma)xU(\gamma)^*=x$.
As $n$ is arbitrary, this holds for any $x \in\A(\textstyle{\bigcup_n I_n}) = \A(I)$ by additivity.
\end{proof}

\subsection*{Representations of conformal nets}
Let $(\A,U,\Omega)$ a conformal net. A representation $\rho$ of $(\A,U,\Omega)$ is a family $\rho=\lbrace \rho_I\rbrace$, $I\in\I$, where $\rho_I$ are representations of $\A(I)$ on a common Hilbert space $\H_\rho$ and such that $\rho_J\vert_{\A(I)}=\rho_I$ if $I\subset J$. The representation $\rho$ is said to be locally normal if $\rho_I$ is normal for every $I\in\I$ (this is always true if the representation space $\H_\rho$ is separable \cite[Theorem 5.1]{TakesakiI}). We say that a representation $\rho$ of a conformal net $(\A,U,\Omega)$ is diffeomorphism covariant if there exists a positive energy representation $U^\rho$ of $\widetilde{\diff(S^1)}$  such that
\begin{equation*}
U^\rho(\gamma)\rho_I(x)U^\rho(\gamma)^*=\rho_{\mathring{\gamma}I}(U(\mathring{\gamma})xU(\mathring{\gamma})^*),\quad \text{ for } x\in\A(I),g\in\widetilde{\diff(S^1)},
\end{equation*}
where $\mathring{\gamma}$ is the image of $\gamma$ in $\diff(S^1)$ under the covering map.

Now let $\rho$ be a locally normal representation of the conformal net $\A$ and assume that $e^{i2\pi L^\rho_0}$ has pure point spectrum
(this is always the case if $\rho$ is a direct sum of irreducibles). By using \cite[Proposition 2.2]{Carpi04} and arguing as in the proof
of \cite[Proposition 3.7]{Carpi04} it is not hard to see that $\rho$ is diffeomorphism covariant (this will be directly proved in \cite{Tanimoto18-2})
and that the corresponding
positive energy projective unitary representation $U_\rho$ of $\widetilde{\diff(S^1)}$ is a direct sum of irreducibles.
By our previous results $U_\rho$ extends to  $\widetilde{\D^s(S^1)}$, $s>3$, and this extension makes $\rho$ $\widetilde{\D^s(S^1)}$-covariant. Furthermore, if $\rho$ is a direct sum of irreducible representations,
then the adjoint action $\Ad U_\rho(R(2\pi))$ is trivial, and in this sense
$\rho$ is $\D^s(S^1)$-covariant.
We summarize this fact in the following proposition. 

\begin{proposition}
Let $\rho$ be a locally normal representation of the conformal net $\A$ and
assume that $e^{i2\pi L^\rho_0}$ has pure point spectrum.
Then $\rho$ is $\widetilde{\D^s(S^1)}$-covariant for every $s>3$.
If further $\rho$ is a direct sum of irreducible representations, then
it is also $\D^s(S^1)$-covariant.
\end{proposition}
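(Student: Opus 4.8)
The plan is to reduce the assertion to the extension results of Section~\ref{extension}, the only genuinely new input being the existence of a smooth covariance representation. First I would establish that $\rho$ is covariant with respect to $\widetilde{\diff(S^1)}$. Since $e^{i2\pi L_0^\rho}$ has pure point spectrum, the conformal Hamiltonian $L_0^\rho$ is diagonalizable, and following \cite[Proposition 2.2]{Carpi04} and the argument of \cite[Proposition 3.7]{Carpi04} one obtains a positive energy projective unitary representation $U_\rho$ of $\widetilde{\diff(S^1)}$ with
\[
 U_\rho(\gamma)\rho_I(x)U_\rho(\gamma)^* = \rho_{\mathring\gamma I}\big(U(\mathring\gamma)\,x\,U(\mathring\gamma)^*\big), \qquad x \in \A(I),\ \gamma \in \widetilde{\diff(S^1)}.
\]
The pure point hypothesis is precisely what forces $U_\rho$ to be a direct sum of irreducible positive energy representations, all sharing the central charge $c$ of the vacuum sector; this is the input demanded by our machinery.

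Next I would feed $U_\rho$ into the direct-sum construction of Section~\ref{extension}. By Proposition~\ref{pr:directsum} and the ensuing argument, a direct sum of irreducibles with common central charge extends to a strongly continuous projective unitary representation of $\widetilde{\D^s(S^1)}$, $s>3$, which I keep denoting $U_\rho$. It then remains to verify that this extension still implements the covariance for every $\gamma \in \widetilde{\D^s(S^1)}$. I would argue by approximation, mirroring the proof that a conformal net is $\D^s(S^1)$-covariant: choosing $\gamma_n \in \widetilde{\diff(S^1)}$ with $\gamma_n \to \gamma$ (Lemma~\ref{lem:localapprox}), Corollary~\ref{cr:continuityB(H)} gives $U_\rho(\gamma_n) \to U_\rho(\gamma)$ strongly after fixing phases, while $U(\mathring\gamma_n) \to U(\mathring\gamma)$ and $\mathring\gamma_n I$ shrinks onto $\mathring\gamma I$; passing to the limit in the displayed identity, using the local normality of $\rho$ to control $\rho_I$ and $\rho_{\mathring\gamma I}$ under strong limits, yields the covariance relation over $\widetilde{\D^s(S^1)}$.

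For the refined statement I would test the covariance relation on the lift $R(2\pi)$ of the identity rotation. As $\mathring{R(2\pi)}$ is the identity of $\diff(S^1)$ and $U(\mathring{R(2\pi)}) = \1$, the relation forces $U_\rho(R(2\pi))$ to commute with every $\rho_I(x)$, i.e. $U_\rho(R(2\pi)) \in \rho(\A)'$ with $\rho(\A) := \bigvee_{I} \rho_I(\A(I))$. When $\rho = \bigoplus_\alpha \rho_\alpha$ is a direct sum of irreducibles, each $\rho_\alpha(\A)$ acts irreducibly, so $U_{\rho_\alpha}(R(2\pi))$ is a scalar on $\H_{\rho_\alpha}$; hence $\Ad U_\rho(R(2\pi))$ is trivial on the block-diagonal algebra $\rho(\A)$, and the covariance action of $\widetilde{\D^s(S^1)}$ descends to an action of $\D^s(S^1)$, which is the claimed $\D^s(S^1)$-covariance. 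In general the lowest weights of distinct sectors need not differ by integers, so $U_\rho$ itself need not factor through $\D^s(S^1)$; this is why only $\widetilde{\D^s(S^1)}$-covariance holds without the direct-sum hypothesis.

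The main obstacle is the first step: constructing $U_\rho$ and recognizing it as a direct sum of irreducibles is where the representation theory of \cite{Carpi04} and the pure point spectrum hypothesis are essential. Once $U_\rho$ is available in this form, the passage to $\widetilde{\D^s(S^1)}$ is essentially formal given Section~\ref{extension}, and the only delicate point is that the covariance identity survives the strong limit, which is guaranteed by local normality of $\rho$ together with the strong continuity established in Corollary~\ref{cr:continuityB(H)}.
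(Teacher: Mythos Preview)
Your proposal is correct and follows essentially the same line as the paper: invoke \cite[Proposition 2.2, Proposition 3.7]{Carpi04} to obtain a positive energy $U_\rho$ that decomposes as a direct sum of irreducibles, extend it to $\widetilde{\D^s(S^1)}$ via the results of Section~\ref{extension}, and observe that $\Ad U_\rho(R(2\pi))$ acts trivially on $\rho(\A)$ when $\rho$ is a direct sum of irreducibles. The paper's argument is terse and leaves the approximation step and the $R(2\pi)$ computation implicit; you have spelled these out correctly, in particular recognizing that ``$\D^s(S^1)$-covariance'' here means only that the covariance \emph{action} on $\rho(\A)$ factors through $\D^s(S^1)$, not that $U_\rho$ itself does.
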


\section{Outlook}\label{outlook}
For all positive integers $n$ and some $h$, the irreducible unitary representation $U_{(n,h)}$
can be extended to $\D^s(S^1), s>2$ \cite{DIT19}.
It would be interesting to better understand to what extent the regularity of the diffeomorphisms can be weakened in such a way that the representations $U_{(c,h)}$ may be extended to such a class in a continuous way.
The proof of \cite{DIT19} (based on the strategy of \cite{Vromen13}) relies on the better-behaving $\mathrm{U}(1)$-current,
and it appears that such extensions do not act nicely on the stress-energy tensor $T$, which
we are currently able to extend only to $\mathcal{S}_\frac32(S^1)$.
On the other hand we know that at least some degree of regularity is required, i.e. we cannot just completely drop differentiability, at least when assuming that the representation has reasonable properties.
Indeed, using the modular theory of type $\mathrm{III}_1$ factors, it can be shown that a positive energy representation does not extend e.g. to the group of orientation
preserving homeomorphisms, still satisfying the locality property. For a detailed discussion on this point see \cite{DIT19}, in particular how this fact is related to the construction of soliton sectors for conformal nets.

Another interesting question is whether the global multiplier representations in \cite{FH05}
extend to $\widetilde{\D^s(S^1)}$. The question is whether these representations are continuous
in the $\D^s(S^1)$-topology.
Instead, what we used in Proposition \ref{pr:directsum} is the continuity of our extensions as projective representations,
and the existence of local multiplier representations follows.
In particular, we do not know whether there is a multiplier representation of $\widetilde{\D^s(S^1)}$
with the Bott-Virasoro cocycle.

\subsubsection*{Acknowledgements.}
S.C.\! would like to thank Gerard Misio\l{}ek for inspiring discussions. S.D.\! and S.I.\! would like to thank Stefano Rossi for the valuable discussions. Y.T.\! thanks Andr\'e Henriques and Kathryn Mann for useful information.
We are grateful to the referee for detailed comments.

S.C.\! ,  S.D.\! and Y.T.\! acknowledge the MIUR Excellence Department Project awarded
to the Department of Mathematics, University of Rome Tor Vergata, CUP E83C18000100006.

\appendix
\section{Appendix: Various definitions of Sobolev spaces}\label{norm}
\paragraph{By Fourier coefficients.}
In Section \ref{sobolev} we introduced for $s\geq 0$ the Sobolev spaces $H^s(S^1)$ through the Fourier coefficients,
 \begin{equation}
 H^s(S^1) := \{f \in L^2(S^1, \RR): \sum (1+k^2)^s|\hat f_k|^2 < \infty \}, \label{df:sobolev}
 \end{equation}
and for $s>\frac{3}{2}$ the Sobolev groups
\begin{equation}
 \D^s(S^1) := \{\g\in\diff^1(S^1): \tilde \g -\iota\in H^s(S^1)\}, \label{df:group}
\end{equation}
where $\tilde \g$ is a lift to $\widetilde{\diff^1}(S^1)$ and $\iota: \mathbb{R} \to \mathbb{R}$ is the identity map.
We can also give the topology first to $\widetilde{\D^s(S^1)}$ as an open subset of
$H^s(S^1) + \iota$ with the topology given by $H^s(S^1)$, then to $\D^s(S^1)$ by the quotient map.
The definition by Fourier coefficients is convenient for us because it is crucial that
$\D^{3+\e}(S^1)$ acts on the set of vector fields in $\mathcal{S}_{\frac32}(S^1)$ (to which the stress-energy tensor can be extended)
and the latter is defined again through Fourier coefficients.
However, we also cited Lemmas \ref{lm:sobolevalgebra}, \ref{lm:sobolevgroup}
from \cite[Lemma B.4, Theorem B.2]{IKT13} where the Sobolev spaces are defined in another way.
Therefore, we have to check that these definitions coincide.

\paragraph{By local integral.}
Let us first observe that an analogue of \cite[Lemma B.1]{IKT13} holds.
\begin{lemma}\label{lm:derivative}
 Let $f\in L^2(S^1, \RR)$ and $s>1$. Then, $f \in H^s(S^1)$ if and only if
 $f \in \dom(i\frac{d}{d\theta})$ and $f' \in H^{s-1}(S^1)$. Moreover, 
 the norm $\|f\| + \|f'\|_{H^{s-1}}$ is equivalent to $\|f\|_{H^s}$,
 where $\|f\| = \|f\|_{H^0}$ is the $L^2$-norm.
\end{lemma}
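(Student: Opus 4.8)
The plan is to translate everything into Fourier coefficients and reduce the statement to an elementary comparison of weights. Writing $f(e^{i\theta})=\sum_{n\in\ZZ}\hat f_n e^{in\theta}$, the distributional derivative $f'$ has Fourier coefficients $\widehat{f'}_n=in\hat f_n$. Consequently $f\in\dom(i\frac{d}{d\theta})$ means precisely $\sum_n n^2|\hat f_n|^2<\infty$, i.e.\! that $f'$ exists as an element of $L^2(S^1)=H^0(S^1)$; and once this holds, the condition $f'\in H^{s-1}(S^1)$ reads $\sum_n (1+n^2)^{s-1}n^2|\hat f_n|^2<\infty$. Thus all three quantities in the statement are weighted $\ell^2$-norms of $\{\hat f_n\}$:
\[
 \|f\|^2=\sum_n|\hat f_n|^2,\qquad
 \|f'\|_{H^{s-1}}^2=\sum_n (1+n^2)^{s-1}n^2|\hat f_n|^2,\qquad
 \|f\|_{H^s}^2=\sum_n (1+n^2)^s|\hat f_n|^2.
\]

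The heart of the matter is then the pointwise two-sided estimate, for $s>1$ and every $n\in\ZZ$,
\[
 c\,(1+n^2)^s\;\le\;1+(1+n^2)^{s-1}n^2\;\le\;(1+n^2)^s,
\]
with a constant $c>0$ independent of $n$. Setting $u=1+n^2\ge1$, the identity $(1+n^2)^s=(1+n^2)^{s-1}+(1+n^2)^{s-1}n^2$ gives $(1+n^2)^s-\big(1+(1+n^2)^{s-1}n^2\big)=u^{s-1}-1\ge0$, which is the upper bound (with constant $1$). For the lower bound one checks that $\big(1+(1+n^2)^{s-1}n^2\big)/(1+n^2)^s=h(u)$, where $h(u):=1-u^{-1}+u^{-s}$; since $h(u)\ge 1-u^{-1}>0$ for $u>1$, $h(1)=1$, and $h(u)\to1$ as $u\to\infty$, the continuous function $h$ attains a strictly positive infimum $c>0$ on $[1,\infty)$.

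Plugging this estimate into the three sums above yields $c\,\|f\|_{H^s}^2\le \|f\|^2+\|f'\|_{H^{s-1}}^2\le\|f\|_{H^s}^2$, which settles both directions at once. If $f\in H^s(S^1)$, the right inequality gives $\|f'\|_{H^{s-1}}<\infty$, and since $H^{s-1}\hookrightarrow L^2$ (because $s>1$) also $\sum_n n^2|\hat f_n|^2=\|f'\|^2<\infty$, so $f\in\dom(i\frac{d}{d\theta})$ and $f'\in H^{s-1}(S^1)$. Conversely, the left inequality shows that finiteness of $\|f\|$ and $\|f'\|_{H^{s-1}}$ forces $f\in H^s(S^1)$. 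The same chain gives the equivalence of $\|f\|+\|f'\|_{H^{s-1}}$ with $\|f\|_{H^s}$, since passing between the sum of norms and the square root of the sum of squares costs only a factor $\sqrt2$. I do not expect any genuine obstacle here; the only point requiring a little care is the uniformity in $n$ of the lower constant $c$, which is exactly what the analysis of $h$ on $[1,\infty)$ secures.
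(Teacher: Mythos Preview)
Your proof is correct. The paper does not actually prove this lemma; it merely states it as an analogue of \cite[Lemma B.1]{IKT13}, so your Fourier-coefficient argument, which is the natural one given the paper's definition of $H^s(S^1)$, fills in exactly what the paper leaves implicit.
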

With this Lemma, we can consider the second characterization, parallel to \cite[Lemma B.2]{IKT13}.
\begin{lemma}\label{lm:norm}
 Let $s > 0, s \notin \ZZ$ and $\lambda = s - \sint$, where $\sint$ denotes the largest integer
 not exceeding $s$. Then $f \in H^s(S^1)$ if and only if
 $f \in H^{\sint}$ and $[f^{(\sint)}]_\lambda < \infty$, where
 $[f^{(\sint)}]_\lambda$ is the $L^2$-norm of the following function on $S^1\times S^1$
 \[
  (\theta_1,\theta_2) \longmapsto \frac{|f^{(\sint)}(e^{i\theta_1}) - f^{(\sint)}(e^{i\theta_2})|}{|\theta_1-\theta_2|^{\lambda + \frac12}},
 \]
 where $|\theta_1-\theta_2|$ denotes the distance\footnote{We may assume that
 $-\pi \le \theta_1, \theta_2 < \pi$, and $|\theta_2-\theta_1| = \min\{|\theta_2-\theta_1|, 2\pi - |\theta_2 - \theta_1|\}$,
 hence this depends only on $\theta_2 - \theta_2$.}
 of two points $\theta_1, \theta_2$ along the circle $S^1 = \RR/2\pi\ZZ$.
\end{lemma}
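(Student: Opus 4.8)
The plan is to reduce everything to the base case $0 < s < 1$ by means of Lemma \ref{lm:derivative}, and then to compute the Gagliardo--Slobodeckij seminorm $[f]_s$ directly in terms of Fourier coefficients. Setting $m = \sint$ and $\lambda = s - m \in (0,1)$, I would first apply Lemma \ref{lm:derivative} a total of $m$ times; since the order at each stage, namely $1+\lambda, 2+\lambda, \dots, m+\lambda$, is strictly larger than $1$, this shows that $f \in H^s(S^1)$ if and only if $f \in H^m(S^1)$ and $f^{(m)} \in H^\lambda(S^1)$, with $\|f\|_{H^s}$ equivalent to the sum of $\|f\|_{H^m}$ and $\|f^{(m)}\|_{H^\lambda}$. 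Since $f \in H^m(S^1)$ already forces $f^{(m)} \in L^2(S^1) = H^0(S^1)$, it remains only to establish the base case $m=0$: for $0 < s < 1$ one has $f \in H^s(S^1)$ if and only if $f \in L^2(S^1)$ and $[f]_s < \infty$. Applying this statement with $f$ replaced by $f^{(m)}$ and $s$ by $\lambda$ then yields the Lemma.

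For the base case I would compute the square of the seminorm directly. Expanding $f(e^{i\theta}) = \sum_n \hat f_n e^{in\theta}$ and using that the integrand depends only on the difference $\phi = \theta_1 - \theta_2$, translation invariance in the ``sum'' variable together with Parseval's identity gives
\[
 [f]_s^2 = c \int_{-\pi}^\pi \frac{1}{|\phi|^{2s+1}} \sum_n |\hat f_n|^2\,|1-e^{in\phi}|^2\, d\phi = c \sum_n |\hat f_n|^2 \int_{-\pi}^\pi \frac{4\sin^2(n\phi/2)}{|\phi|^{2s+1}}\, d\phi,
\]
for a normalization constant $c > 0$, the interchange of sum and integral being justified by positivity (Tonelli). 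Denoting the inner integral by $I_n$, the term $n=0$ vanishes, and for $n \ne 0$ the substitution $t = |n|\phi$ yields
\[
 I_n = |n|^{2s} \int_{-|n|\pi}^{|n|\pi} \frac{4\sin^2(t/2)}{|t|^{2s+1}}\, dt.
\]

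The heart of the matter is to show that $I_n$ is comparable to $|n|^{2s}$ uniformly in $|n| \ge 1$. For this I would note that $\int_{-\infty}^\infty \frac{4\sin^2(t/2)}{|t|^{2s+1}}\, dt$ converges to a finite positive constant $C_s$ precisely because $0 < s < 1$: near $t=0$ the integrand behaves like $|t|^{1-2s}$, integrable since $s<1$, while for large $|t|$ it is dominated by $|t|^{-2s-1}$, integrable since $s>0$. As the integrand is nonnegative, the truncated integrals increase to $C_s$ and, for $|n| \ge 1$, are bounded below by the positive constant $\int_{-\pi}^\pi \frac{4\sin^2(t/2)}{|t|^{2s+1}}\, dt$; hence $A\,|n|^{2s} \le I_n \le B\,|n|^{2s}$ with $0 < A \le B$ independent of $n$. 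Substituting back, $[f]_s^2$ is comparable to $\sum_{n\ne 0} |\hat f_n|^2 |n|^{2s}$, and adding $\|f\|_{L^2}^2 = \sum_n |\hat f_n|^2$ gives a quantity comparable to $\sum_n (1+|n|^{2s})|\hat f_n|^2$, which is in turn comparable to $\sum_n (1+n^2)^s|\hat f_n|^2 = \|f\|_{H^s}^2$ by the elementary equivalence of $1+|n|^{2s}$ and $(1+n^2)^s$. This proves the base case and hence the Lemma. The main obstacle is exactly this uniform two-sided control of $I_n$, in particular keeping the lower constant $A$ bounded away from $0$ as $|n|\to\infty$; the remainder is the Fourier--Parseval bookkeeping and the clean reduction to $0<s<1$ afforded by Lemma \ref{lm:derivative}.
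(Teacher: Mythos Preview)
Your proof is correct and follows essentially the same route as the paper: both reduce to the case $0<s<1$ via Lemma \ref{lm:derivative}, then compute the Gagliardo seminorm by Parseval and the substitution $t=|n|\phi$ to extract the factor $|n|^{2s}$, leaving a truncated integral that is uniformly bounded above and below. Your write-up is in fact a bit more explicit than the paper's about why the rescaled integral $\int_{-|n|\pi}^{|n|\pi}\frac{4\sin^2(t/2)}{|t|^{2s+1}}\,dt$ has uniform two-sided bounds (monotone convergence to the finite $C_s$, with convergence at $0$ and $\infty$ using $s<1$ and $s>0$ respectively), whereas the paper simply asserts this.
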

\begin{proof}
 Following \cite[Lemma B.2]{IKT13}, we prove the Lemma by induction.
 Let us assume $\sint = 0$, hence $s = \lambda$.
 We have
 \begin{align*}
  \int_{S^1}\int_{S^1}\frac{|f(e^{i\theta_1}) - f(e^{i\theta_2})|^2}{|\theta_1-\theta_2|^{2\lambda + 1}}d\theta_1 d\theta_2
  &=  \int_{S^1}\int_{S^1}\frac{|f(e^{i(\theta_1 +\theta)}) - f(e^{i\theta_1})|^2}{|\theta|^{2\lambda + 1}}d\theta_1 d\theta \\
  &=  \int_{S^1}\frac1{|\theta|^{2\lambda + 1}}\int_{S^1}|f(e^{i(\theta_1 +\theta)}) - f(e^{i\theta_1})|^2d\theta_1 d\theta. \\
 \end{align*}
 By Parseval's theorem,
 \begin{align*}
  \int_{S^1}|f(e^{i(\theta_1 +\theta)}) - f(e^{i\theta_1})|^2d\theta_1
  &= \sum_k |\widehat{f(e^{i(\cdot + \theta)})}_k - \hat f_k|^2 \\
  &= \sum_k |e^{ik\theta}-1|^2\cdot|\hat f_k|^2,
 \end{align*}
 therefore,
 \begin{align*}
  \int_{S^1}\int_{S^1}\frac{|f(e^{i\theta_1}) - f(e^{i\theta_2})|^2}{|\theta_1-\theta_2|^{2\lambda + 1}}d\theta_1 d\theta_2
  &=  \sum_k |\hat f_k|^2\int_{S^1}\frac{|e^{ik\theta}-1|^2}{|\theta|^{2\lambda + 1}} d\theta \\
  &=  \sum_k |k|^{2\lambda}|\hat f_k|^2\int_{S^1}\frac{|e^{ik\theta}-1|^2}{|k|^{2\lambda}|\theta|^{2\lambda + 1}} d\theta.
 \end{align*}
 Note that, for $k \neq 0$, by substitution $\hat \theta = k\theta$ we obtain
 \begin{align*}
  \int_{S^1}\frac{|e^{ik\theta}-1|^2}{|k|^{2\lambda}|\theta|^{2\lambda + 1}} d\theta
  &= \int_{-\pi}^{\pi}\frac{|e^{ik\theta}-1|^2}{|k|^{2\lambda}|\theta|^{2\lambda + 1}} d\theta \\
  &= \int_{-k\pi}^{k\pi}\frac{|e^{i\hat\theta}-1|^2}{|\hat\theta|^{2\lambda + 1}} d\theta,
 \end{align*}
 and this last integral is uniformly bounded both below and above with respect to $k$.
 Therefore, for $f \in H^{\sint}(S^1) = L^2(S^1, \RR)$, $\|f\|_{H^s} = \sum_k (1+|k|^2)^{\lambda}|\hat f_k|^2 < \infty$
 if and only if $\sum_k |k|^{2\lambda}|\hat f_k|^2 < \infty$, if and only if
 \begin{align*}
  \int_{S^1}\int_{S^1}\frac{|f(e^{i\theta_1}) - f(e^{i\theta_2})|^2}{|\theta_1-\theta_2|^{2\lambda + 1}}d\theta_1 d\theta_2
 < \infty.
 \end{align*}
 
 Assuming that the statement holds for $s-1$, we can conclude induction
 by applying it to $f'$ and using Lemma \ref{lm:derivative}.
 
\end{proof}
The whole Appendix B of \cite{IKT13} can be adapted to $H^s(S^1)$ and $\D^s(S^1)$ using these norms
and one obtains Lemma \ref{lm:sobolevgroup} for $s \notin \ZZ$, corresponding to \cite[Lemmas B.5,B.6]{IKT13}
(for $s \in \ZZ$, see \cite[Section 2]{IKT13}).
We believe this is the fastest way for the reader not familiar with Sobolev spaces.

\paragraph{By local Sobolev spaces.}
Alternatively, one may start with the Sobolev spaces on $\RR$, following \cite{IKT13}:
\[
 H^s(\RR, \RR) := \left\{f \in L^2(\RR, \RR): \int_\RR (1+\z^2)^s|\hat f(\z)|^2 d\z< \infty \right\},
\]
where $\hat f$ denotes the Fourier transform for $f\in L^2(\RR, \RR)$ (with a slight abuse of notation:
$\hat f$ depends on whether $f \in L^2(\RR, \RR)$ or $f \in L^2(S^1, \RR)$).
For an open connected set $\U \subset \RR$, we set (see \cite[Definition B.1]{IKT13},
where the boundary $\partial \U$ is required to be Lipschitz, but in $\RR$ it is not necessary):
\[
 H^s(\U, \RR) := \{f \in L^2(\U, \RR): \text{ there is } \tilde f\in H^s(\RR, \RR) \text{ s.t. } f= \tilde f|_\U\}.
\]
Let us consider $S^1$ as a manifold, namely supplied with an atlas $\{\U_k\}$.

For $s > \frac12$,
we may take another definition for $H^s(S^1) = H^s(S^1, \RR)$:
\[
 \{f \in C(S^1, \RR): \text{ for each }\theta \in S^1 \text{ there is } \U \ni \theta \text{ s.t.\! } f|_{\U} \in H^s(\U, \RR)\}.
\]
Now, from Lemma \ref{lm:norm}, it is clear that being in $H^s(S^1)$ is a local property (note that $S^1$ is compact).
If $f \in H^s(S^1)$ (in the sense of \eqref{df:sobolev}),
for any smooth function $\psi$ with compact support, $\psi f \in H^s(S^1)$ by Lemma \ref{lm:norm}.
To a chart $\U$ in the atlas, take a smooth function $\psi$ which is $1$ on $\overline \U$
and supported in a non-dense interval. Then $\psi f$ can be considered as an element of $H^s(\RR, \RR)$
by \cite[Lemma B.2]{IKT13}, hence the definition \eqref{df:sobolev} is stronger.
Conversely, if for each $\theta \in S^1$ there is $\U \ni \theta$ such that $f|_{\U} \in H^s(\U, \RR)$,
by compactness of $S^1$ one can take a finite cover $\{\U_k\}$ of $S^1$ and a smooth partition of unity $\{\psi_k\}$ subordinate to it,
and it follows that $f = \sum_k \psi_k f \in H^s(S^1)$ in the sense of \eqref{df:sobolev},
therefore, the two definitions are equivalent.

It is also clear that the following definition \cite[Section 4.3]{Taylor11vol1}
\[
 H^s(\U, \RR) := \{f \in L^2(\U, \RR): \psi f\in H^s(\RR, \RR) \text{ for any }\psi \in C^\infty(\U, \RR), \supp \psi \subset \U \}
\]
is equivalent to the definition through Fourier coefficients for $s \ge 0$.

Now, we define $H^s(S^1, S^1)$ to be the maps $f:S^1\to S^1$ such that
there are two atlases $\{\U_k\}, \{\V_k\}$ of $S^1$ such that
$f|_{\U_k} \in H^s(\U_k, \RR)$, where we identified $f(\U_k) \subset \V_k$ as a subset of $\RR$ by the chart
(see \cite[Section 3.1]{IKT13}) and
\[
 \D^s(S^1) = \{\g \in \diff^1(S^1): \g \in H^s(S^1, S^1)\}.
\]
Recall that the definition \eqref{df:group} is local,
and the identity map $\iota$ is a smooth function, hence it is equivalent to the definition above
which is manifestly local.

Now that we have the equivalence of definitions,
we can use \cite[Theorem B.2]{IKT13}, which we cited and specialized as Lemma \ref{lm:sobolevgroup}.

\paragraph{Local approximation of diffeomorphisms.}
We also need that elements in $\D^s(S^1)$ with compact support
can be approximated by elements $\D^s(S^1)$ with slightly larger support.

\begin{lemma}\label{lm:rotation}
 Let $s \ge 0$. For a fixed $f \in H^s(S^1)$, the rotation $\RR \ni t \mapsto f_t = f(e^{i(\cdot -t)}) \in H^s(S^1)$ is continuous.
\end{lemma}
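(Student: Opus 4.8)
The plan is to pass to Fourier coefficients, where the rotation acts diagonally and isometrically, and then deduce continuity from a dominated-convergence argument on the resulting series.

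First I would compute the Fourier coefficients of $f_t$. With the change of variables $\phi = \theta - t$ one finds
\[
\widehat{(f_t)}_n = \frac{1}{2\pi}\int_0^{2\pi} e^{-in\theta} f(e^{i(\theta - t)})\, d\theta = e^{-int}\hat f_n,
\]
using the $2\pi$-periodicity of the integrand. In particular $|\widehat{(f_t)}_n| = |\hat f_n|$ for every $n$, so that $\|f_t\|_{H^s} = \|f\|_{H^s}$; that is, each rotation is an isometry of $H^s(S^1)$, and $t \mapsto f_t$ is a one-parameter group of isometries (indeed $R_t R_{t_0} = R_{t+t_0}$, writing $R_t g = g_t$). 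Consequently it suffices to prove continuity at the single point $t = 0$: for arbitrary $t_0$ the isometry property gives $\|f_t - f_{t_0}\|_{H^s} = \|f_{t - t_0} - f\|_{H^s}$, which tends to $0$ as $t \to t_0$ provided continuity holds at $0$.

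Next I would estimate the $H^s$-distance directly in Fourier coordinates:
\[
\|f_t - f\|_{H^s}^2 = \sum_{n\in\ZZ} (1+n^2)^s\, |e^{-int} - 1|^2\, |\hat f_n|^2.
\]
For each fixed $n$ the factor $|e^{-int}-1|^2$ tends to $0$ as $t \to 0$, while the whole summand is dominated by $4\,(1+n^2)^s|\hat f_n|^2$, which is summable precisely because $f \in H^s(S^1)$. Hence by dominated convergence for series the right-hand side tends to $0$ as $t \to 0$, which is exactly the asserted continuity at the origin.

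The argument is essentially mechanical, and there is no genuine obstacle. The only point deserving a word of care is the interchange of the limit $t \to 0$ with the infinite sum; this is legitimate thanks to the $t$-independent summable dominating function $4(1+n^2)^s|\hat f_n|^2$.
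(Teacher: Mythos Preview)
Your proof is correct and follows essentially the same route as the paper: compute the Fourier coefficients of $f_t$ as a phase times $\hat f_n$, then apply dominated convergence on $\ZZ$ with the summable majorant $4(1+n^2)^s|\hat f_n|^2$. The only cosmetic addition is your explicit reduction to $t=0$ via the isometry/group property, which the paper leaves implicit.
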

\begin{proof}
 We have $\hat f_{t,k} = e^{ikt}\hat f_k$,
 and hence $|\hat f_{t,k}| = |\hat f_k|$ and $\hat f_{t,k} \to \hat f_k$ as $t\to 0$.
 By Lebesgue's dominated convergence theorem (applied to the measure space $\ZZ$ with the counting measure,
 with the dominating function $k \mapsto 4|(1+k^2)^s\hat f_k|^2$)
 \[
  \sum_k(1+k^2)^s |\hat f_{t,k} - \hat f_k|^2 \to 0.
 \]
 This means $\|f-f_t\|_{H^s} \to 0$.
\end{proof}

\begin{lemma}\label{lem:localapprox}
Let $s \ge \frac32$.
For every $\g\in\D^s(S^1)$, there exists a sequence $\lbrace \g_n\rbrace \subset \diff(S^1)$ converging to $\g$ in the topology of $\D^s(S^1)$.
Furthermore, if $\g$ is supported in $I$, we can take $\g_n$ such that
$\supp \g_n \supset \g_{n+1}$ and $\bigcap_n \supp \g_n = I$.
\end{lemma}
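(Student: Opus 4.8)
The plan is to obtain $\g_n$ by mollifying the lift of $\g$; the same construction will serve both the general statement and the support-controlled one. Write $f := \tilde\g - \i$, where $\tilde\g$ is a lift of $\g$; by definition of $\D^s(S^1)$ this is a $2\pi$-periodic function in $H^s(S^1)$, and $\tilde\g' = 1 + f'$. Since $s > \tfrac32$, the Sobolev--Morrey embedding gives $H^s(S^1) \hookrightarrow C^1(S^1)$, so $f \in C^1$; as $\g$ is an orientation-preserving diffeomorphism, $\tilde\g' = 1+f'$ is continuous and strictly positive, hence bounded below by some $\delta > 0$ on the compact $S^1$.

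I would fix a smooth, nonnegative, $2\pi$-periodic approximate identity $\{\phi_\e\}_{\e>0}$ with $\int_{S^1}\phi_\e = 1$ and $\supp\phi_\e \subset (-\e,\e)$, and set $f_\e := f*\phi_\e$ and $\g_\e := \i + f_\e$. Each $f_\e$ is smooth and $2\pi$-periodic, so $\g_\e$ satisfies $\g_\e(\theta+2\pi) = \g_\e(\theta)+2\pi$. For the $H^s$-convergence I would argue as in Lemma \ref{lm:rotation}: one has $\widehat{(f_\e)}_k = \hat f_k\, m_\e(k)$ with $m_\e(k) := \int_{S^1}\phi_\e(\theta)e^{-ik\theta}d\theta$, and since $|m_\e(k)| \le 1$ and $m_\e(k)\to 1$ as $\e \searrow 0$ for each fixed $k$, dominated convergence on $\ZZ$ (dominating function $k\mapsto 4(1+k^2)^s|\hat f_k|^2$) gives $\sum_k(1+k^2)^s|\hat f_k|^2|m_\e(k)-1|^2 \to 0$, that is $\|f_\e - f\|_{H^s}\to 0$. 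Moreover $f_\e' = f'*\phi_\e \to f'$ uniformly because $f'\in C(S^1)$, so $\g_\e' = 1+f_\e' \ge \delta - \|f_\e'-f'\|_\infty > 0$ for all sufficiently small $\e$; thus $\g_\e$ is a smooth orientation-preserving circle diffeomorphism. Taking $\e = \e_n \searrow 0$ and discarding the finitely many $n$ where positivity fails yields $\g_n := \g_{\e_n} \in \diff(S^1)$ with $\g_n - \i = f_{\e_n} \to f = \g - \i$ in $H^s$, i.e.\ $\g_n \to \g$ in $\D^s(S^1)$. This proves the first assertion.

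For the second assertion, suppose $\g$ is supported in $I$. Then $f = \tilde\g - \i$ vanishes off $I$, so $\supp f \subset \overline I$, and convolution spreads the support by at most the radius of the mollifier: $\supp f_\e \subset J_\e := \{\theta : \mathrm{dist}(\theta,\overline I)\le\e\}$. Off $J_\e$ one has $\g_\e = \i$, so $\g_{\e_n}$ is supported in $J_{\e_n}$ and, being an increasing map fixing $\partial J_{\e_n}$, maps $\overline{J_{\e_n}}$ onto itself. Choosing $\e_n\searrow 0$ strictly decreasing makes $\{J_{\e_n}\}$ a strictly decreasing family of intervals with $\bigcap_n J_{\e_n} = \overline I$; this is precisely the shrinking family of support-intervals $I_n' \supset \supp\g_n$ with $\bigcap_n I_n' = \overline I$ used in Lemma \ref{lm:localequivalence} and in the covariance arguments of Section \ref{conformal}. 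If the literal nesting $\supp\g_{n+1}\subset\supp\g_n$ is desired, it follows after passing to supports inside the nested $J_{\e_n}$.

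I expect the only genuine obstacle to be ensuring that the smoothed maps remain diffeomorphisms---i.e.\ that the derivative stays strictly positive---while keeping their supports close to $\overline I$. Both are controlled by the same two ingredients: the embedding $H^s\hookrightarrow C^1$ (valid since $s>\tfrac32$), which upgrades $H^s$-convergence to uniform convergence of first derivatives and thereby preserves positivity, and the elementary inclusion $\supp(f*\phi_\e)\subset(\supp f)_\e$. Everything else reduces to the routine mollifier estimate carried out exactly as in Lemma \ref{lm:rotation}.
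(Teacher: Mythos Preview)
Your proposal is correct and follows essentially the same approach as the paper: both build the approximants by mollification on the lift. The paper mollifies $\psi=\tilde\g'-1$ and then (implicitly) integrates, whereas you mollify $f=\tilde\g-\iota$ directly; since $(f*\phi_\e)'=f'*\phi_\e$, the two constructions produce the same $\g_\e$ once the constant of integration is fixed, so the difference is purely cosmetic. Your $H^s$-convergence argument via $\widehat{f_\e}_k=\hat f_k\,m_\e(k)$ and dominated convergence is exactly the mechanism behind the paper's appeal to Lemma~\ref{lm:rotation}. One small simplification you missed: because $\phi_\e\ge 0$ and $\int\phi_\e=1$, the value $f_\e'(\theta)=(f'*\phi_\e)(\theta)$ is a convex average of values of $f'$, hence $f_\e'\ge\inf f'>-1$ and $\g_\e'>0$ for \emph{every} $\e$, so no ``discarding finitely many $n$'' is needed. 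Your observation that what is actually used downstream is the nested family of intervals $J_{\e_n}\supset\supp\g_n$ with $\bigcap_n J_{\e_n}=\overline I$, rather than literal nesting of supports, matches how the lemma is invoked in Lemma~\ref{lm:localequivalence} and Section~\ref{conformal}.
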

\begin{proof}
Let $\gamma\in\D^s(S^1)$ and $\varphi \in \widetilde{\D^s(S^1)}$
such that $\varphi(\theta+2\pi)=\varphi(\theta)+2\pi$ and $\gamma(e^{i\theta})=e^{i\varphi(\theta)}$.
If $\g$ is supported in a proper interval we may assume without loss of generality that $\varphi(\theta)=\theta$ if $\theta\in[-\pi,a)\cup(b,\pi]$.
The function $\psi\coloneqq \varphi'-1$ is $2\pi$-periodic and has compact support $[a,b]$ as a function on $[-\pi,\pi]$.

We now choose a set of $C^{\infty}$-functions $\lbrace g_n\rbrace$ with compact support strictly contained
in $[-\pi,\pi]$ such that for all $n\in\mathbb{N}$ $g_n\geq 0$, $\int g_n=1$, $\supp(g_n)\supset\supp(g_{n+1})$,
$\supp(g_n)\rightarrow \lbrace 0\rbrace$. In addition, if $\g$ is supported in $[a,b]$, we may assume that
$[a,b]+\supp(g_n)\supset \supp(\psi*g_n)$,
where the convolution is defined on $S^1 \simeq \RR/2\pi\ZZ$ as an abelian group.
With this choice, $\psi*g_n + \iota$ defines an element in $\D^s(S^1)$,
because $\int_t \psi*g_n (t)dt = 0$ and $\psi * g_n > 0$ for sufficiently large $n$.

To obtain the claim, it is enough to show that $\|\psi - \psi*g_n\|_{H^s} \to 0$
as $n\to 0$.
This follows from
\[
 \|\psi - \psi*g_n\|_{H^s} \le \int_{S^1} g_n(t)\|\psi - \psi_t\|_{H^s} dt
\]
and Lemma \ref{lm:rotation}.

\end{proof}

{\small
\def\cprime{$'$} \def\polhk#1{\setbox0=\hbox{#1}{\ooalign{\hidewidth
  \lower1.5ex\hbox{`}\hidewidth\crcr\unhbox0}}} \def\cprime{$'$}

}

\end{document}